\newtheorem{theorem}{Theorem}[section]
\newtheorem{corollary}[theorem]{Corollary}
\newtheorem{definition}[theorem]{Definition}
\newtheorem{lemma}[theorem]{Lemma}
\newtheorem{proposition}[theorem]{Proposition}
\newtheorem{remark}[theorem]{Remark}
\newenvironment{proof}[1][Proof]{\noindent\textbf{#1.} }{\ \rule{0.5em}{0.5em}}
\begin{document}

\title{ Nonlinear Thermodynamic Formalism: Mean-field Phase Transitions,
Large Deviations and Bogoliubov's Variational Principle}
\author{Jean-Bernard Bru, Walter de Siqueira Pedra and Artur  O. Lopes}
\maketitle

\begin{abstract}
{\scriptsize {Let $\Omega =\{1,2,\ldots ,d\}^{\mathbb{N}}$, $T$ be the shift
acting on $\Omega $, $\mathcal{P}(T)$ the set of $T$-invariant
probabilities, and $h(\rho )$ the entropy of $\rho \in \mathcal{P}(T)$.
Given a H\"{o}lder potential $A:\Omega \rightarrow \mathbb{R}$ and a
continuous function $F:\mathbb{R}\rightarrow \mathbb{R}$, we investigate the
probabilities $\rho _{F,A}$ that are maximizers of the }nonlinear pressure
of $A$ and $F$ defined by}

{\scriptsize $\,\,\,\,\,\,\,\,\,\,\,\,\,\,\,\,\,\,\,\,\,\,\,\,\,\,\,\,\,\,\,%
\,\,\,\,\,\,\,\,\,\,\,\,\,\,\,\,\mathfrak{P}_{F,A}:=\sup_{\rho \in \mathcal{P%
}(T)}\left\{ F(\int A(x)\rho (\mathrm{d}x))+h(\rho )\right\} .$ }

{\scriptsize \smallskip \noindent {$\rho _{F,A}$} is called a \textit{%
nonlinear equilibrium}; a nonlinear phase transition occurs when there is
more than one. In the case $F$\ is convex or concave, we combine Varadhan's
lemma and Bogoliubov's variational principle to characterize them via the
linear pressure problem and self-consistency conditions. Let $\mu \in 
\mathcal{P}(T)$ be the maximal entropy measure, $\varphi
_{n}(x)=n^{-1}(\varphi (x)+\varphi (T(x))+\cdots +\varphi (T^{n-1}(x)))$ and 
$\beta >0$.}\newline
{\scriptsize (I) We also consider the limit measure $\mathfrak{m}$ on $%
\Omega $, so that $\forall \psi \in C(\Omega )$, }

{\scriptsize \smallskip $\,\,\,\,\,\,\,\,\,\,\,\,\,\,\,\,\,\,\,\,\,\,\,\,\,%
\,\,\,\,\,\,\,\,\,\,\,\,\,\,\,\,\,\,\,\,\,\,\int \psi (x)\,\mathfrak{m}\,(%
\mathrm{d}x)\,\,=\lim_{n\rightarrow \infty }\frac{\,\int \,\psi (x)\,\,\,e^{%
\frac{\beta n}{2}\,\,A_{n}((x)^{2}}\,\,\mu \,(\mathrm{d}x)\,}{\int e^{\frac{%
\beta n}{2}\,\,A_{n}((x)^{2}}\mu \,(\mathrm{d}x)\,\,}.$ \smallskip }

{\scriptsize \noindent We call $\mathfrak{m}$ a \textit{quadratic mean-field
Gibbs probability}; it may not be shift-invariant.}\newline
{\scriptsize (II) Via subsequences $n_{k}$, $k\in \mathbb{N}$, we study the
limit measure $\mathfrak{M}$ on $\Omega $, so that $\forall \psi \in
C(\Omega )$, \smallskip }

{\scriptsize $\,\,\,\,\,\,\,\,\,\,\,\,\,\,\,\,\,\,\,\,\,\,\,\,\,\,\,\,\,\,\,%
\,\,\,\,\,\,\,\,\,\,\,\,\,\,\,\,\int \psi (x)\mathfrak{M}(\mathrm{d}%
x)=\lim_{k\rightarrow \infty }\frac{\,\int \psi _{n_{k}}(x)e^{\frac{\beta
n_{k}}{2}A_{n_{k}}(x)^{2}}\mu (\mathrm{d}x)}{\int e^{\frac{\beta n_{k}}{2}%
A_{n_{k}}(x)^{2}}\mu (\mathrm{d}x)}.$ \smallskip }

{\scriptsize \noindent We call $\mathfrak{M}$ a \textit{quadratic mean-field
equilibrium probability}; it is shift-invariant.}\newline
{\scriptsize Both cases (I) and (II) can be related to self-consistency
conditions characterizing nonlinear equilibria {$\rho _{F,A}$ for $%
F(x)=\beta x^{2}/2$. In particular, }$\mathfrak{M}$ belongs to the closed
convex hull of nonlinear equilibria. Explicit examples are given. }
\end{abstract}

{\scriptsize 2010 Mathematics Subject Classification: 37D35, 82B30, 82C26. }

{\scriptsize Key words: nonlinear thermodynamic formalism, entropy,
nonlinear pressure, equilibrium and Gibbs states, mean-field probabilities,
phase transitions, Large Deviations, Bogoliubov's Variational Principle.}

\section{Introduction}

Consider $\Omega =\{1,2,\ldots ,d\}^{\mathbb{N}}$ and the shift $T$ acting
on $\Omega $. Let $\mathcal{P}$ be the set of all Borel probabilities on $%
\Omega $ and $\mathcal{P}(T)\subseteq \mathcal{P}$, the set of $T$-invariant
probabilities. Given a H\"{o}lder potential $A:\Omega \rightarrow \mathbb{R}$
and a convex or concave function $F:\mathbb{R}\rightarrow \mathbb{R}$ (in
particular, it is continuous), our main aim is to investigate the set of $T$%
-invariant probabilities maximizing the so-called nonlinear pressure problem 
\begin{equation}
\sup_{\rho \in \mathcal{P}(T)}\left\{ F%
\Big (%
\int A(x)\rho (\mathrm{d}x)%
\Big )%
+h(\rho )\right\} ,  \label{Rur}
\end{equation}%
where $h(\rho )$ is the (Kolmogorov-Sinai) entropy of $\rho $.

We could also consider, with obvious adaptations, the multidimensional case
in which $F$ is a continuous function $\mathbb{R}^{k}\times \mathbb{R}%
^{l}\rightarrow \mathbb{R}$, $k,l\in \mathbb{N}_{0}$, $k+l\geq 1$, such
that, for all $(x,y)\in \mathbb{R}^{k}\times \mathbb{R}^{l}$, $F(\cdot ,y)$
is a convex function $\mathbb{R}^{k}\rightarrow \mathbb{R}$ and $F(x,\cdot )$
is a concave function $\mathbb{R}^{l}\rightarrow \mathbb{R}$. Our goal here
is not to achieve maximum generality -- that will be done elsewhere \cite%
{BPL} -- but to explore the main ideas in the simplest way possible and
discuss explicit examples. For this reason, we will limit ourselves here to
the case of one-dimensional nonlinearity, that is, we will only consider the
case $k+l=1$.

A probability maximizing \eqref{Rur} is called a nonlinear equilibrium
probability for the pair $F,A$. Given a potential $A$, if there exists more
than one probability maximizing the nonlinear pressure, we say that a
nonlinear phase transition takes place. When $F(x)=x$, we recover in %
\eqref{Rur} the standard (called here linear) case. If $F(x)=\pm\, x^{2}/2$,
we speak about the quadratic case. An important issue is establishing a
relationship between the nonlinear pressure problem and the standard
(linear) pressure problem for another effective potential that is connected
to $A$.

Related to the above problem, in a series of papers \cite%
{LeWa1,LeWa2,BKL,Barre,Barre1,Ku,Ding,Zhu}, a rigorous approach to analyze
questions in mean-field theory from the ergodic point of view is introduced,
in particular for the Curie-Weiss-Potts models. These results constitute the
foundations of a new area called \textit{nonlinear thermodynamical formalism}%
. In the above references, the nonlinear equilibrium probabilities are
standard (linear) equilibrium probabilities for linear combinations of
potentials that appear in the nonlinear term of the pressure. Here, using an
appropriate version of the so-called Bogoliubov's approximation, we are able
to describe such linear combinations exactly, in contrast to the previous
works. This allows us, in particular, to detect phase transitions by showing
the non-uniqueness of the linear combinations of potentials. As far as we
know, this approach is new in the context of the nonlinear thermodynamical
formalism.

Note that Bogoliubov's approximation was originally invented in 1947 to
obtain a microscopic theory of helium superfluidity \cite{Bogoliubov1}. This
is connected to the approximating Hamiltonian method used to study
mean-field theories, as defined by Bogoliubov Jr., Brankov, Kurbatov,
Tonchev and Zagrebnov in the seventies and eighties \cite%
{Bogjunior,AHM-non-poly1,AHM-non-poly2,approx-hamil-method0,approx-hamil-method,approx-hamil-method2}%
. In the case of quantum lattices, an extensive development of this method
appeared in the 2013 monograph \cite{BP2}.

In Section \ref{lar} we recall some well-known results from the
thermodynamical formalism of symbolic dynamical systems, as well as results
from large deviations theory, and discuss how they provide a natural
framework for the variational problem of the nonlinear pressure for H\"{o}%
lder potentials $A:\Omega \rightarrow \mathbb{R}$. In this framework,
Section \ref{Bogo} is dedicated to applications of Bogoliubov's variational
principle in the scope of the thermodynamical formalism of symbolic
dynamical systems, for the case where $F$ is a convex or concave function.
The concave case is the more involved one, and a separate subsection
(Subsection \ref{conc}) is devoted to this case. The relationship between
the nonlinear pressure problem and the standard one is one of the central
questions of Section \ref{lar}: in Section \ref{Bogo} we show in this
context how the so-called self-consistency condition plays an important role
and naturally emerges from Bogoliubov's variational principle. In Section %
\ref{Bog} we introduce the concept of mean-field free energy (only in the
quadratic case, for simplicity), which provides an alternative and useful
way to get nonlinear equilibrium probabilities. In Section \ref{qua} we
consider the quadratic case, that is, $F(x)=\pm \beta x^{2}/2$, where the
parameter $\beta >0$ refers to the inverse temperature in statistical
physics. In Section \ref{mae} we consider a special choice of potential $A$,
for which explicit expressions for the quadratic pressure problem can be
obtained; we take advantage of the results obtained in Section \ref{qua}. We
then present examples of quadratic (nonlinear) phase transitions. In Section %
\ref{conj} we analyze the quadratic mean-field Gibbs probabilities and give
explicit examples showing the existence of phase transitions in this
setting. Notice that the authors of \cite{LeWa1} discussed this case when $%
\mu $ is the probability of maximal entropy, and we adapt their proof with $%
\mu $\ being replaced with the equilibrium probability $\mu _{f}$ of an
arbitrary H\"{o}lder continuous potential $f:\Omega \rightarrow \mathbb{R}$.
In this case, we get different forms for the self-consistency conditions
related to the quadratic equilibrium probabilities. In Section \ref{QLVE} we
study quadratic mean-field equilibrium probabilities (see \eqref{q1a1} and
Definition \ref{dedeus}). Finally, in Section \ref{Til} we make an
observation concerning the tilting property of the large deviation theory in
the thermodynamical formalism and Bogoliubov's variational problem.

\section{Large deviations and nonlinear equilibrium probabilities}

\label{lar}

\subsection{Equilibrium probabilities}

$\Omega $ is the set of infinite strings on a finite alphabet $\{1,2,\ldots
,d\}$ ($d\in \mathbb{N}$), that is, $\Omega :=\{1,2,\ldots ,d\}^{\mathbb{N}}$%
. Denote by $T$ the shift $T:\Omega \rightarrow \Omega $, defined by 
\begin{equation*}
T(x_{1},x_{2},\ldots ):=(x_{2},x_{3},\ldots )\text{{}}
\end{equation*}%
for all $x=(x_{1},x_{2},\ldots )$. A case of particular interest is $d=2$,
which, for convenience, is identified with $\Omega =\{-1,1\}^{\mathbb{N}}$.
We consider on $\Omega $ the metric 
\begin{equation}
d(x,y):=\left( \frac{1}{2}\right) ^{\min \{n\text{ }:\text{ }x_{n}\neq
y_{n}\}},  \label{tyr}
\end{equation}%
with $x=(x_{1},x_{2},\ldots )$, $y=(y_{1},y_{2},\ldots )$. Observe that $%
(\Omega ,d)$ is a compact metric space.

Let $\mathcal{P}(T)$ be the (compact, convex) space of $T$-invariant
probabilities, always endowed with the weak$^{\ast }$ topology. Given a H%
\"{o}lder continuous potential $A:\Omega \rightarrow \mathbb{R}$, define 
\begin{equation}
P(A):=\sup_{\rho \in \mathcal{P}(T)}\left\{ \rho (A)+h(\rho )\right\} ,
\label{iore}
\end{equation}%
where $h(\rho )$ is the Kolmogorov-Sinai entropy of $\rho $ (see Chapter 4
of \cite{Walters} or Chapter 3 of \cite{PP}) and, as usual, 
\begin{equation*}
\rho (A)=\int A(x)\rho (\mathrm{d}x).
\end{equation*}

We call $P(A)$ the \textit{linear (or standard) pressure} of $A$. Regarding
physics, a given potential $A$ as above corresponds to the Hamiltonian $H=-A$
in statistical mechanics. For the corresponding problem in $C^{\ast }$%
-algebras, i.e., for the study of the quantum version of equilibrium
probabilities, we refer to \cite{BP1} and \cite{BP2}.

Note that, clearly, 
\begin{equation}
P(A-\log d)=\sup_{\rho \in \mathcal{P}(T)}\left\{ h(\rho )+\rho (A)\right\}
-\log d.\text{{}}  \label{iore43}
\end{equation}%
This remark is, of course, trivial and serves only to emphasize the
important role played by the factor $\log d$. It is nothing but the maximum
entropy. A few other remarks in this sense are given below.

Note that the Kolmogorov-Sinai entropy $h(\rho )$ is (weak$^{\ast }$)
upper-continuous and affine on the compact convex space $\mathcal{P}(T)$.
See chapter 6 and Theorems 8.1 and 8.2 in \cite{Walters}. In particular, the
variational problem (\ref{iore}) has a nonempty compact face of maximizers
which are nothing but equilibrium probabilities:

\begin{definition}
\label{equilibrium probability linear}The probabilities $\mu _{A}\in 
\mathcal{P}(T)$ maximizing the right-hand side of (\ref{iore}) are called
the linear (or standard) equilibrium probabilities for $A$.
\end{definition}

\noindent If $A$ is of H\"{o}lder class, then the equilibrium probability is
unique.

Linear pressures and equilibrium probabilities can be studied via the
so-called Ruelle operator. The Ruelle operator $\mathcal{L}_{A}$ for a
continuous potential $A:\Omega \rightarrow \mathbb{R}$ acts on functions $%
\psi :\Omega \rightarrow \mathbb{R}$ in the following way: For each $x\in
\Omega :=\{1,2,\ldots ,d\}^{\mathbb{N}}$, 
\begin{equation}
\mathcal{L}_{A}(\psi )(x)=\sum_{a=1}^{d}e^{A(ax)}\psi
(ax)=\sum_{\{y\,|\,T(y)=x\}}e^{A(y)}\psi (y),  \label{popoiu1}
\end{equation}%
where $ax:=(a,x_{1},x_{2},\ldots )$ for any $x=(x_{n})_{n\in \mathbb{N}}\in
\Omega $.

Given a continuous potential $A:\Omega \rightarrow \mathbb{R}$, we define
the dual operator $\mathcal{L}_{A}^{\ast }$ on the space of the Borel finite
measures on $\Omega $ as the operator that maps the finite measure $%
\mathfrak{v}$ to the finite measure $\mathfrak{u}=\mathcal{L}_{A}^{\ast }(%
\mathfrak{v})$ defined by 
\begin{equation}
\mathfrak{u}\left( \psi \right) =\int \psi \,d\mathfrak{u}=\int \psi (x)\,%
\mathcal{L}_{A}^{\ast }(\mathfrak{v})(\mathrm{d}x)=\int \mathcal{L}_{A}(\psi
)(x)\,\mathfrak{v}(\mathrm{d}x)  \label{feij2}
\end{equation}%
for any $\psi \in C(\Omega )$.

With these definitions, the Ruelle(-Perron-Frobenius) theorem connects
linear pressures and equilibrium probabilities with properties of the Ruelle
operator:

\begin{theorem}
\label{rer1}If $A$ is of H\"{o}lder class, there exists a strictly positive H%
\"{o}lder eigenfunction $\psi _{A}$ for $\mathcal{L}_{A}:C(\Omega
)\rightarrow C(\Omega )$, associated to a strictly positive eigenvalue $%
\lambda _{A}$ which is simple\footnote{%
It is also isolated from the rest of the spectrum when $\mathcal{L}_{A}$ is
restricted to the set of H\"{o}lder functions.}, equals the spectral radius
of $\mathcal{L}_{A}$ and satisfies $\log \lambda _{A}=P(A)$. Moreover, there
exists an eigenprobability $\nu _{A}$ such that $\mathcal{L}_{A}^{\ast }(\nu
_{A})=\lambda _{A}\nu _{A}$, and the unique (linear) equilibrium probability 
$\mu _{A}$ for $A$ is the measure $\psi _{A}\nu _{A}$ properly normalized.
\end{theorem}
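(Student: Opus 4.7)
The plan is to obtain the triple $(\lambda_A,\psi_A,\nu_A)$ in two stages, first producing the eigenprobability of the dual operator and then the eigenfunction, and finally using both objects to identify the equilibrium probability and the pressure.

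For the dual side, I would work on the compact convex set of Borel probabilities on $\Omega$ (weak$^{\ast}$ topology) and apply the Schauder--Tychonoff fixed point theorem to the continuous map
\[
\mathfrak{v}\,\longmapsto\,\frac{\mathcal{L}_A^{\ast}(\mathfrak{v})}{\mathcal{L}_A^{\ast}(\mathfrak{v})(1)}.
\]
This is well defined because $\mathcal{L}_A^{\ast}(\mathfrak{v})(1)=\mathfrak{v}(\mathcal{L}_A(1))>0$ for $\mathfrak{v}$ a probability, since $\mathcal{L}_A(1)(x)=\sum_a e^{A(ax)}$ is strictly positive and continuous. The fixed point $\nu_A$ satisfies $\mathcal{L}_A^{\ast}(\nu_A)=\lambda_A\nu_A$ with $\lambda_A:=\nu_A(\mathcal{L}_A(1))>0$.

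For the eigenfunction I would study the sequence $\psi_n:=\lambda_A^{-n}\mathcal{L}_A^n(1)$. The crucial step, and the main technical obstacle, is the \emph{bounded distortion} estimate: using that $A$ is H\"older, one shows there exists $C>0$ such that for every $n$ and every pair $y,y'$ with $T^n(y)=T^n(y')$,
\[
\Bigl|\sum_{j=0}^{n-1}A(T^j(y))-\sum_{j=0}^{n-1}A(T^j(y'))\Bigr|\leq C\,d(y,y')^{\alpha},
\]
where $\alpha$ is the H\"older exponent of $A$. Summing $e^{S_nA}$ over the $d^n$ preimages and using the estimate above, one obtains that $\{\psi_n\}$ is uniformly bounded below and above by strictly positive constants, and is equicontinuous in the H\"older norm. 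The Arzel\`a--Ascoli theorem then yields a H\"older limit point $\psi_A>0$. Testing $\nu_A$ against $\lambda_A^{-1}\mathcal{L}_A(\psi_n)=\psi_{n+1}$ and passing to the limit gives $\mathcal{L}_A(\psi_A)=\lambda_A\psi_A$ and $\int \psi_A\,d\nu_A=1$.

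Simplicity of $\lambda_A$ is obtained from the normalized Markov operator $\widetilde{\mathcal{L}}(\phi):=(\lambda_A\psi_A)^{-1}\mathcal{L}_A(\psi_A\phi)$, which satisfies $\widetilde{\mathcal{L}}(1)=1$. Any other eigenfunction at eigenvalue $\lambda_A$ corresponds to a $\widetilde{\mathcal{L}}$-invariant continuous function, which by a standard maximum principle on the full shift (using that the preimages under $T^n$ are dense) must be constant; an analogous argument together with compactness of $\mathcal{L}_A$ on the H\"older space gives that $\lambda_A$ is isolated, yielding the spectral gap. Finally, define $\mu_A:=\psi_A\nu_A$. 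The conjugation relation $\mathcal{L}_A^{\ast}(\psi\,d\nu_A)=\lambda_A(\widetilde{\mathcal{L}}\psi)\,d\nu_A$ shows $\mu_A$ is $T$-invariant; Rokhlin's entropy formula applied to the normalized Jacobian $e^A\psi_A/(\lambda_A\psi_A\circ T)$ of $\mu_A$ yields
\[
h(\mu_A)+\int A\,d\mu_A=\log\lambda_A,
\]
so $\log\lambda_A\leq P(A)$. The reverse inequality follows from the variational principle together with Jensen's inequality applied to $\widetilde{\mathcal{L}}$, which, combined with uniqueness of the $\widetilde{\mathcal{L}}$-invariant probability (a consequence of simplicity), gives uniqueness of $\mu_A$ and the identification $\log\lambda_A=P(A)$.
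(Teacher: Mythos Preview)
The paper does not give a proof of this theorem: it simply cites \cite{PP}, Theorems~2.2 and~3.5, and \cite{FaJi}. Your sketch is essentially the classical argument one finds in those references (Schauder fixed point for $\nu_A$, bounded distortion plus Arzel\`a--Ascoli for $\psi_A$, normalization and maximum principle for simplicity, Rokhlin's formula for the pressure), so in spirit you are reproducing what is cited. However, two steps in your write-up are genuinely incorrect as stated.

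First, your bounded distortion estimate is formulated with the wrong pairing. The condition $T^n(y)=T^n(y')$ means $y$ and $y'$ share the same tail beyond position $n$ but may differ arbitrarily in the first $n$ coordinates; for such pairs $|S_nA(y)-S_nA(y')|$ is typically of order $n$ (take $A$ depending only on the first coordinate), so your bound $C\,d(y,y')^{\alpha}$ fails. What is needed for equicontinuity of $\psi_n(x)=\lambda_A^{-n}\mathcal{L}_A^n(1)(x)$ is the opposite: for $x,x'$ close, pair each preimage $y=wx$ of $x$ with the preimage $y'=wx'$ of $x'$ having the \emph{same} prefix $w$ of length $n$, and then $|S_nA(y)-S_nA(y')|\le C\,d(x,x')^{\alpha}$ uniformly in $n$. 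Second, an Arzel\`a--Ascoli limit point of $(\psi_n)$ is not automatically an eigenfunction: from $\lambda_A^{-1}\mathcal{L}_A\psi_{n_k}=\psi_{n_k+1}$ you would need $\psi_{n_k+1}\to\psi_A$ along the same subsequence, which is not guaranteed, and ``testing against $\nu_A$'' only yields $\int\psi_A\,d\nu_A=1$, not the eigenvalue equation. The standard remedy is to pass to Ces\`aro averages $\tilde\psi_n=\tfrac1n\sum_{k=0}^{n-1}\psi_k$, for which $\lambda_A^{-1}\mathcal{L}_A\tilde\psi_n=\tilde\psi_n+\tfrac1n(\psi_n-1)$, so any limit point is a genuine eigenfunction. (A smaller point: $\mathcal{L}_A$ is quasi-compact, not compact, on the H\"older space; the spectral gap comes from a Lasota--Yorke/Ionescu-Tulcea--Marinescu argument rather than compactness.)
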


\begin{proof}
See \cite{PP}, in particular, Theorem 2.2 and Theorem 3.5. E.g., for the
equality $\log \lambda _{A}=P(A)$, see Theorem 3.5 in \cite{PP} or Section 3
in \cite{FaJi}, while $\mu _{A}=C\psi _{A}\nu _{A}$ is a consequence of
Theorem 2.2 and 3.5 in \cite{PP}.
\end{proof}

Here we call $\nu _{A}$ the \textit{linear Gibbs probability} for $A$, in
order to highlight the distinction with the concept of linear equilibrium
probability for $A$. When setting definitions for the nonlinear case
(extending the linear one) we will be consistent with this terminology (as
for instance in Definitions \ref{loloy} and \ref{dedeus}).

We say that the potential $A$ is normalized if $\mathcal{L}_{A}(1)=1$. The
potential $A=-\log d$ is an example of a normalized potential and, in this
case, the equilibrium probability $\mu _{-\log d}$ is nothing but $\mu $,
the maximum entropy probability. Remark, moreover, that, in this special
case, one has $\log \lambda _{A}=\log 1=P(A)=0$. Note that $h(\mu )=-\log d$%
. In all the paper, $\mu $ denotes the probability of maximal entropy for $T$%
.

In the next sections, we will be interested in the following nonlinear
problem: Given a continuous function $F:\mathbb{R}\rightarrow \mathbb{R}$,
determine the $T$-invariant probabilities that are maximizers for the
nonlinear pressure for $A$ and $F$: 
\begin{equation}
\mathfrak{P}_{F}=\mathfrak{P}_{F,A}:=\sup_{\rho \in \mathcal{P}(T)}\left\{
F(\rho (A))+h(\rho )\right\} .  \label{Kbom}
\end{equation}%
Compare with the linear case given by Equation (\ref{iore}). Similar to
Definition \ref{equilibrium probability linear}, we extend the definition of
equilibrium probabilities to the nonlinear situation:

\begin{definition}
\label{nonlinear equilibrium probabilities}The probabilities $\rho _{A}=\rho
_{F,A}\in \mathcal{P}(T)$ maximizing the right-hand side of (\ref{Kbom}) are
called \textit{nonlinear equilibrium probabilities} for $A$ and $F$.
\end{definition}

Note that, unlike the linear case, a convex combination of nonlinear
equilibrium probabilities for $A$ and $F$ do not have to be a nonlinear
equilibrium probability for $A$. In particular, the set of maximizers of the
variational problem (\ref{Kbom}) is not necessarily a face in $\Omega $ as
in the linear situation. Furthermore, the set of nonlinear equilibrium
probabilities can have many elements, unlike the linear case for H\"{o}lder
functions $A$. This yields to phase transitions:

\begin{definition}
\label{phase transition}Given $A$ and $F$, we say that a phase transition
occurs for the nonlinear pressure problem if there is more than one $T$%
-invariant probability maximizing \eqref{Kbom}, that is, when the nonlinear
equilibrium probability is not unique.
\end{definition}

\noindent In Section \ref{Examples}, we give an example of a phase
transition occurring for the nonlinear pressure problem, while in Example %
\ref{boex}, we present a case where there is no phase transition.

It is clear that the above problem is equivalent to asking for the $T$%
-invariant probabilities that realize the following supremum:%
\begin{equation}
\sup_{\rho \in \mathcal{P}(T)}\left\{ F(\rho (A))+h(\rho )-\log d\right\} .
\label{Kbomm}
\end{equation}%
For $F(x)=x$ we just get $\mathfrak{P}_{F,A}=P(A)-\log d$ or, equivalently, $%
\mathfrak{P}_{F-\log d,A}=P(A)$. A case of particular interest is $%
F(x)=x^{2}/2$. In this situation, we write 
\begin{equation}
\mathfrak{P}_{2}(A):=\sup_{\rho \in \mathcal{P}(T)}\left\{ \frac{\rho (A)^{2}%
}{2}+h(\rho )-\log d\right\} ,  \label{Kibom}
\end{equation}%
where $h(\rho )$ is the (Kolmogorov-Sinai) entropy of $\rho $. We call $%
\mathfrak{P}_{2}(A)$ the \textit{quadratic pressure for }$A$. It is related
to the so-called Curie-Weiss model (see \cite{LeWa1}). The maximizing $T$%
-invariant probabilities will be called \textit{quadratic equilibrium
probabilities} for $A$. Consequently, we say that there exists a \textit{%
quadratic phase transition} when there is more than one $T$-invariant
probability maximizing \eqref{Kibom}.

In Section \ref{mae}, we will present examples of H\"{o}lder potentials $A$
and quadratic functions $F$, for which explicit expressions can be obtained
for the probabilities that maximize \eqref{Kbom}. This amounts to solving
Equation \eqref{utsx} given below. In these examples, $d=2$. More precisely,
we will consider in Section \ref{mae} potentials $A:\{-1,1\}^{\mathbb{N}%
}\rightarrow \mathbb{R}$ of the form%
\begin{equation}
A(x)=A(x_{1},x_{2},\ldots ,x_{n},\ldots )=\sum_{n=1}^{\infty }a_{n}\,x_{n},
\label{bin1}
\end{equation}%
where $a_{n}$ is a sequence of real numbers converging exponentially to
zero. We refer to \cite{CDLS} or Example 13 in Section 3.2 of \cite{LTF} for
an extensive study of properties of (linear) equilibrium probabilities for
this kind of potential.

In Section \ref{mae} we are particularly interested in the case that, for a
given H\"{o}lder potential $A$, the quadratic equilibrium probability is not
unique, i.e, there is a quadratic phase transition (as described in Remark %
\ref{yyx1}). The symmetry $P(tA)=P(-tA)$, $t\in \mathbb{R}$, can be used to
produce examples of such phase transitions (see Remark \ref{sisim}). In
fact, we will provide an explicit example of a quadratic phase transition by
making use of that precise symmetry.

For the quadratic case, we will also address issues related to Section 2.1
of \cite{LeWa1} in our Section \ref{conj}, where we consider \textit{%
quadratic mean-field Gibbs phase transitions} for $d=2$, a different notion
of phase transition, as compared with the previous concept of quadratic
(equilibrium) phase transition. This is related to the structure of
probabilities, which are called here quadratic mean-field Gibbs
probabilities. See below Section \ref{Quadratic mean-field probabilities},
in particular Equation \eqref{xiux23}, and Section \ref{laplace}, in
particular Equation \eqref{xiu}.

The quadratic case is also useful to illustrate the self-consistency
conditions, which are pivotal to describe nonlinear equilibrium
probabilities from the linear thermodynamic formalism. First, given a H\"{o}%
lder potential $A:\Omega \rightarrow \mathbb{R}$, one can show (see for
instance Theorem 3 in \cite{L3}, or Proposition 3.2 in \cite{Kif}) that, for
all $t\in \mathbb{R}$, 
\begin{equation}
c(t)=c_{A,\mu }(t):=\lim_{n\rightarrow \infty }\hat{c}_{n}(t)+\log d=P(tA),
\label{feij502}
\end{equation}%
where $P(t\,A)$ is the pressure of the potential $tA$ and, for each $n\in 
\mathbb{N}$, 
\begin{equation}
\hat{c}_{n}(t):={\frac{1}{n}}\log \int e^{t(A(x)+A(T(x))+A(T^{2}(x))+\cdots
+A(T^{n-1}(x))}\mu (\mathrm{d}x),  \label{feij50}
\end{equation}%
with $\mu $ being the maximal entropy probability. Note that $c(0)=\log d$.
It is also useful to consider the similar function 
\begin{equation}
\hat{c}(t)=\hat{c}_{A,\mu }(t):=P(tA)-\log d=\lim_{n\rightarrow \infty }\hat{%
c}_{n}(t).  \label{feij502bis0}
\end{equation}%
In Ergodic Theory, the quantity 
\begin{equation}
\hat{c}_{A,\mu }(t)=\hat{c}(t)=c(t)-\log d  \label{feij502bis}
\end{equation}%
is sometimes called the free energy for the pair $A,\mu $ at time $t$.

These functions are directly related to the existence of some Large
Deviation Principle (LDP) via the Varadhan(-Bryc) lemma, as explained below
in Section \ref{LD}. See in particular Equations (\ref{feij502})--(\ref%
{feij50}), which show that $c$ is nothing but some logarithmic moment
generating function. Interestingly, and perhaps surprisingly for
non-experts, the same functions define the self-consistency conditions
derived from Bogoliubov's variational problem, which allow us to obtain all
the nonlinear equilibrium probabilities. To our knowledge, such a link
between large deviations and Bogoliubov's approach is only known in the
quantum case, at least for the weakly imperfect superstable Bose gas \cite%
{LDP-BZ}.

Indeed, observe that the function $t\mapsto P(tA)$ is strictly convex,
unless $A$ is coboundary to a constant\footnote{%
The H\"{o}lder potential $A$ being coboundary to a constant means that it is
of the form $A=\alpha +B\circ T-B$ for some constant $\alpha \in \mathbb{R}$
and H\"{o}lder potential $B$.} (a particular case that we will avoid).
Moreover, $t\mapsto P(tA)$ is analytic if $A$ is H\"{o}lder (see Proposition
4.7 in \cite{PP}, or Theorem 8.2 in \cite{FaJi}). In this case, one can show
that 
\begin{equation}
c^{\prime }(t)=\hat{c}^{\prime }(t)=\mu _{tA}(A)=\int A(x)\mu _{tA}(\mathrm{d%
}x),  \label{outr}
\end{equation}%
where $\mu _{tA}$ is the unique linear equilibrium probability for the
potential $tA$ (see Proposition 4.10 in \cite{PP}). Moreover, 
\begin{equation}
\lim_{t\rightarrow \infty }c_{A,\mu }(t)=\infty =\lim_{t\rightarrow -\infty
}c_{A,\mu }(t).  \label{tryu17}
\end{equation}%
One of our main results in Section \ref{Bogo} is related to the so-called
self-consistency Equation \eqref{geger}, the quadratic case of which is
Equation \eqref{outr71}. This refers to the following statement for the
quadratic example:

\begin{theorem}
The equation in $t$ 
\begin{equation}
\hat{c}^{\prime }(t)=\mu _{tA}(A)=\int A(x)\mu _{tA}(\mathrm{d}x)=t
\label{utsx}
\end{equation}%
determines the possible values $t$ for which the linear equilibrium
probability for the potential $tA$ maximizes the quadratic pressure $%
\mathfrak{P}_{2}(A)$ for the potential $A$ (see (\ref{Kibom})).
\end{theorem}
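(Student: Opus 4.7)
The plan is to linearize the quadratic term $\rho(A)^{2}/2$ via the Legendre--Fenchel identity
$$\frac{a^{2}}{2} \;=\; \sup_{t\in\mathbb{R}}\left\{\, t a - \frac{t^{2}}{2}\,\right\}, \qquad a\in\mathbb{R},$$
which reduces the nonlinear problem \eqref{Kibom} to a one-parameter family of \emph{linear} pressure problems for the potentials $tA$.

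First I would substitute this identity inside \eqref{Kibom} and interchange the two suprema (a trivial swap, as both are suprema) to obtain
$$\mathfrak{P}_{2}(A) \;=\; \sup_{t\in\mathbb{R}}\left\{\, \sup_{\rho\in\mathcal{P}(T)} \bigl[\,t\rho(A)+h(\rho)\,\bigr] \,-\, \frac{t^{2}}{2} \,-\, \log d \,\right\} \;=\; \sup_{t\in\mathbb{R}}\left\{\,\hat c(t) - \frac{t^{2}}{2}\,\right\},$$
where the second equality uses the definition \eqref{iore} of $P(tA)$ together with \eqref{feij502bis0}. By the Ruelle theorem (Theorem~\ref{rer1}), the inner supremum is uniquely attained at the linear equilibrium $\mu_{tA}$; and by the coercivity \eqref{tryu17} combined with the $-t^{2}/2$ penalty, the outer supremum is attained at some finite $t^{\ast}\in\mathbb{R}$.

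Next I would observe that $\mu_{tA}$ realizes $\mathfrak{P}_{2}(A)$ if and only if the pair $(\mu_{tA},t)$ is a joint maximizer of the above double supremum. The outer maximization is over the smooth function $g(t):=\hat c(t)-t^{2}/2$ (smoothness coming from the analyticity of $t\mapsto P(tA)$; see Proposition~4.7 in \cite{PP}), so its first-order condition reads $g'(t)=\hat c'(t)-t=0$. Invoking \eqref{outr} one obtains
$$\mu_{tA}(A) \;=\; \hat c'(t) \;=\; t,$$
which is exactly the self-consistency equation \eqref{utsx}.

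The subtle step, and the main obstacle, is the converse direction: a solution $t$ of \eqref{utsx} is only a \emph{critical point} of $g$, and $g$ need not be concave, since $\hat c$ is strictly convex whenever $A$ is not coboundary to a constant. Thus among the solutions of \eqref{utsx} one still has to select those that realize the global maximum of $g$. Equation \eqref{utsx} therefore characterizes the \emph{candidates} within the family $\{\mu_{tA}\}_{t\in\mathbb{R}}$, and the genuine quadratic equilibria are identified by comparing the values $g(t)$ at the distinct roots. This last comparison is also the mechanism through which a quadratic phase transition can occur: namely, when two distinct solutions $t_{1}\neq t_{2}$ of \eqref{utsx} achieve the same (maximal) value of $g$.
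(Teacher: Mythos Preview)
Your proposal is correct and follows essentially the same approach as the paper: linearize the quadratic term via the Legendre--Fenchel identity, commute the two suprema to obtain the Bogoliubov variational principle $\mathfrak{P}_{2}(A)=\sup_{t}\{\hat c(t)-t^{2}/2\}$, identify the inner maximizer as $\mu_{tA}$ by the Ruelle theorem, and read off the self-consistency equation \eqref{utsx} as the first-order condition for the outer supremum. Your closing remark that \eqref{utsx} only yields \emph{candidates} (critical points of $g$), with the genuine quadratic equilibria selected among them by comparing the values of $g$, matches exactly the paper's discussion following Theorem~\ref{lgre} and Remark~\ref{jkd}.
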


Depending on the potential, there may be more than one solution to Equation %
\eqref{utsx} and a quadratic phase transition can occur. In Remark \ref{boex}
of Section \ref{mae}, for a certain choice of potential $A$ (and parameter $%
\beta >0$ that we introduce later on), we can determine the exact point $t$
at which the self-consistency condition holds true.

In Section \ref{The convex case}, we analyze the more general case where $F$
is an arbitrary convex function (i.e., $F$ is not necessarily quadratic) via
Bogoliubov's variational problem. Similar to the quadratic case, if $F$ is
convex, we present the associated self-consistency equation (see %
\eqref{geger}, a generalization of \eqref{utsx}), which determines the
equilibrium probabilities for the nonlinear pressure $\mathfrak{P}_{F,A}$,
defined by (\ref{Kbom}). Later on, in Section \ref{conc}, we will also
examine the more complex case where $F$ is a concave function.

Nevertheless, the combination of convex and concave functions in the
nonlinear variational problem (\ref{Kbom}) is not addressed here, as it
involves certain subtleties that would complicate our discussion and thus
make it much more obscure. This situation is, however, treated in a very
general way in our second article \cite{BPL}, for alphabets that are
potentially uncountable (unlike here) but still compact.

\subsection{Large deviations in the thermodynamical formalism\label{LD}}

In probability theory, the law of large numbers states that, as $%
n\rightarrow \infty $, the empirical mean of $n$ independent and identically
distributed random variables converges in probability to their expected
value, provided it exists. The central limit theorem refines this result by
describing the fluctuations of the empirical mean: when rescaled by $\sqrt{n}
$, these deviations from the expected value converge in distribution to a
normal law, assuming the variance is finite. Then, the large-deviation
theory \cite{DZ,DS89} addresses the probability of rare events in which the
empirical mean deviates from the expected value. For large $n\gg 1$, such
probabilities decay exponentially fast as $n\rightarrow \infty $ under a
so-called \emph{large deviation principle} (LDP). As a general reference for
the large deviation theory in the ergodic theory setting, we recommend \cite%
{Orey}.

We use this formalism below in the context of the (nonlinear) the
thermodynamical formalism of symbolic dynamical systems studied here. Large
deviations will be connected to nonlinear equilibrium probabilities, as
defined in the previous subsection, through the functions $c$ and $\hat{c}$
defined by (\ref{feij502})-(\ref{feij502bis}).

Bearing in mind the Varadhan(-Bryc) lemma (or Bryc's inverse varadhan lemma)
and the fact that $c$ is nothing but some logarithmic moment generating
function (see (\ref{feij502})--(\ref{feij50})), we define a (good) rate
function by applying the Legendre transform on $c$. To this end, we first
define the so-called ergodic maximal value of $A$ to be 
\begin{equation*}
em(A):=\sup_{\rho \in \mathcal{P}(T)}\rho (A)<\infty .
\end{equation*}%
Suppose that $em(A)\geq 0$. Then, one can show (see \cite{BLL}, \cite{G1},
or Section 6 in \cite{LTF}) that 
\begin{equation}
\lim_{t\rightarrow \infty }\frac{c_{A,\mu }(t)}{t}=\lim_{t\rightarrow \infty
}\frac{c(t)}{t}=em(A).  \label{tryu1}
\end{equation}%
Here, recall that $\mu $ is the maximal entropy probability and the
potential $A$ is also fixed, while, for simplicity, $c_{A,\mu }$ and $\hat{c}%
_{A,\mu }$ are often denoted by $c$ and $\hat{c}$, respectively. See again
Equations (\ref{feij502})--(\ref{feij502bis}). Further, let the real numbers 
$m_{A}$ and $M_{A}$\ be defined by the finite interval 
\begin{equation*}
\{c_{A,\mu }^{\prime }(t)\text{ }|\text{ }t\text{ }\in \mathbb{R}%
\}=(m_{A},M_{A})\subseteq \mathbb{R},\text{ \ \ }m_{A}<M_{A}.
\end{equation*}

Define now the following rate function $I=I_{A}$, as being the Legendre
transform of the function $\hat{c}$, that is, for any $x\in \mathbb{R}$, 
\begin{equation}
I(x)=I_{A}(x)=\sup_{t\in \mathbb{R}}\{tx-\hat{c}(t)\}=\sup_{t\in \mathbb{R}%
}\{tx-P(tA)+\log d\}.  \label{feij68}
\end{equation}%
See Equation (\ref{feij502bis0}). $I=I_{A}$ is called the (large deviation)
rate function for the pair $A,\mu $. See, for instance, \cite{Mord} or \cite%
{Orey}. $I$ is convex and analytic because the mapping $t\mapsto \hat{c}%
_{A,\mu }(t)$ is convex and analytic\footnote{%
Recall that $t\mapsto P(tA)$ is analytic if $A$ is H\"{o}lder. See \cite{PP}%
, in particular Proposition 4.7.}. Moreover, by Equation (\ref{outr}), 
\begin{equation}
c^{\prime }(0)=\mu (A)=\int A(x)\mu (\mathrm{d}x)\qquad \text{and}\qquad
I(\mu (A))=I_{A}(\mu (A))=0,  \label{sdsdf}
\end{equation}%
recalling once again that $\mu $ is the probability of maximal entropy.

The function $I=I_{A}$ is well defined in the finite interval $(m_{A},M_{A})$
(i.e., it takes finite values) and we set $I(x)=I_{A}(x):=\infty $ if the
corresponding supremum does not exist. $I_{A}(x)$ tends to $\infty $ when $x$
approaches the boundary of the interval $(m_{A},M_{A})$. In particular,the
domain of the rate function $I=I_{A}$ equals 
\begin{equation}
\mathrm{dom}(I_{A}):=\{x\in \mathbb{R}:I_{A}(x)<\infty \}=(m_{A},M_{A}).
\label{domain I}
\end{equation}%
Moreover, $I$ vanishes only at the point $\mu (A)\in (m_{A},M_{A})$, see (%
\ref{sdsdf}). Note that the function $I$ takes non-negative values since a
simple computation using (\ref{iore}) and (\ref{outr}) shows that 
\begin{equation}
I(x)=I_{A}(x)=\log d-h(\mu _{tA})\geq 0,  \label{myt68}
\end{equation}%
where $\mu _{tA}$ is now the equilibrium probability for the potential $tA$
and 
\begin{equation}
x=c^{\prime }(t)=\frac{dP(tA)}{dt}=\mu _{tA}(A)=\int A(x)\mu _{tA}(\mathrm{d}%
x).  \label{myt68bis}
\end{equation}%
(See Section \ref{qua}.) From the last observations, remark finally that $%
I=I_{A}$ is in particular not the $\infty $--constant function and has
compact level sets, i.e., $I^{-1}([0,m])=\{x\in \mathbb{R}:I(x)\leq m\}$ is
compact for any $m\geq 0$. Such a rate function is said to be \emph{good} in
the large deviation theory. For a more detailed discussion of all claims of
this paragraph, see, for instance, Section 8 of \cite{LTF} or \cite{L4}.

For each $n\in \mathbb{N}$ denote by $\mu _{n}=\mu _{n}^{A}$ the probability
measure such that, for any open interval $O\subseteq \mathbb{R}$, 
\begin{equation}
\mu _{n}(O)=\mu _{n}^{A}(O)=\mu \left( \left\{ z\mid A_{n}(z)\in O\right\}
\right) ,  \label{binc78}
\end{equation}%
where, for any $\varphi \in C(\Omega )$, the continuous functions $\varphi
_{n}$, $n\in \mathbb{N}$, are the so-called Birkhoff averages%
\begin{equation}
\varphi _{n}:=\frac{1}{n}(\varphi +\varphi \circ T+\cdots +\varphi \circ
T^{n-1}),\text{ \ \ }n\in \mathbb{N}.  \label{Eq Birk Av}
\end{equation}%
Clearly, for each $n\in \mathbb{N}$, the support of $\mu _{n}$ is inside the
interval $[-\Vert A\Vert _{\infty },\Vert A\Vert _{\infty }]$, where $\Vert
A\Vert _{\infty }$ is the supremum norm of $A$. In addition, for each $n\in 
\mathbb{N}$ and bounded Borel function $V:\mathbb{R}\rightarrow \mathbb{R}$,
one has 
\begin{equation}
\int V(z)\mu _{n}(\mathrm{d}z)=\int V\left( A_{n}(x)\right) \mu (\mathrm{d}%
x).  \label{brinc78}
\end{equation}%
One can show (see \cite{L3}, \cite{L4}, \cite{Orey} or \cite{Kif}) in our
case that, for any (open or closed) interval $B\subseteq \mathbb{R}$, 
\begin{equation}
\lim_{n\rightarrow \infty }{\frac{1}{n}}\log \mu _{n}\left( B\right)
=-\inf_{x\in B}\{I(x)\},\text{{}}  \label{aret23x}
\end{equation}%
where $I=I_{A}$ is the rate function defined above by (\ref{feij68}) for the
H\"{o}lder potential $A$ and the maximum entropy probability $\mu $.

\begin{remark}
By (\ref{feij502bis0}), (\ref{feij68}) and (\ref{myt68}), one has that 
\begin{equation}
xt=I(x)+\hat{c}(t)=I(x)+P(tA)-\log d\Longleftrightarrow t=I^{\prime }(x).
\label{feij99}
\end{equation}%
Equivalently, 
\begin{equation}
xt=I(x)-\log d+P(tA)=-h(\mu _{tA})+P(tA)\Longleftrightarrow t=I^{\prime }(x),
\label{feij991}
\end{equation}%
where $\mu _{tA}$ is the equilibrium probability for the potential $tA$. In
fact, $I(x)=\log d-h(\mu _{tA})\geq 0$ when $t=I^{\prime }(x).$
\end{remark}

Given a continuous and bounded function $F:\mathbb{R}\rightarrow \mathbb{R}$
and a continuous potential $A:\Omega \rightarrow \mathbb{R}$, Equation (\ref%
{aret23x}) indicates that both large-deviation upper and lower bounds are
satisfied, i.e., the sequence $\{\mu _{n}=\mu _{n}^{A}\}_{n\in \mathbb{N}}$
of probabilities satisfies a so-called \emph{Large Deviation Principle}
(LDP) with good rate function $I=I_{A}$ (see \eqref{feij68}) and speed $%
(n)_{n\in \mathbb{N}}$. By Theorem 1 of \cite{Kos}, it follows that 
\begin{equation}
\lim_{n\rightarrow \infty }\frac{1}{n}\log \int e^{nF\left( x\right) }\mu
_{n}(\mathrm{d}x)=\sup_{x\in \mathbb{R}}\{F(x)-I(x)\}=:\hat{c}(F)
\label{impi}
\end{equation}%
and 
\begin{equation}
I(x)=\sup_{F\in C(\mathbb{R})}\{F(x)-\hat{c}(F)\}.  \label{zeno1}
\end{equation}%
In particular, combined with (\ref{brinc78}), one obtains that, for any $%
t\in \mathbb{R}$,%
\begin{equation}
\lim_{n\rightarrow \infty }\frac{1}{n}\log \int e^{ntF\left( A_{n}(x)\right)
}\mu (\mathrm{d}x)=\sup_{x\in \mathbb{R}}\{tF(x)-I(x)\}=\hat{c}(tF).
\label{impio}
\end{equation}%
This result is in fact a direct application of the Varadhan(-Bryc) lemma,
which is a standard cornerstone of the large deviation theory and serves as
a starting point for large-deviation studies. It is a powerful tool with
wide-ranging applications. See, e.g., Theorem 2.1.10 in \cite{DS89} or
Theorem 4.3.1 in \cite{DZ}.

In the next section, we combine the Varadhan-Bryc lemma given above with the
variational principle (\ref{iore}) for the linear pressure and Bogoliubov's
variational principle to prove that $\hat{c}(F)$ (see (\ref{impi})) is
nothing else but the nonlinear pressure $\mathfrak{P}_{F}$, when $F$ is
convex or concave. Additionally, the equality $\hat{c}(F)=\mathfrak{P}_{F}$
will allow us, again via Bogoliubov's variational principle, to show that
nonlinear equilibrium probabilities are necessarily linear equilibrium
probabilities of (self-consistent) effective potentials. As already
mentioned, among other things, Bogoliubov's variational principle determines
these potentials and, hence, allow us to detect nonlinear phase transitions.

\begin{remark}
\label{mmx}Within the level-2 large deviations, considering empirical
probabilities $\sum_{j=0}^{n-1}\delta _{T^{j}(z)}$ for the maximal entropy
probability $\mu $, the associated large deviation rate function is $I(\rho
)=-s(\rho )$ for any probability $\rho $, where $s(\rho ):=h(\rho )-\log d$
for all $T$-invariant probabilities $\rho $ and $s(\rho ):=-\infty $ in
other cases. See, for instance, \cite{L3}.
\end{remark}

Similar large-deviation results to those already achieved with the maximal
entropy probability $\mu $ can also be obtained with any linear equilibrium
probability $\mu _{f}$ associated with a general H\"{o}lder potential $%
f:\Omega \rightarrow \mathbb{R}$. See Definition \ref{equilibrium
probability linear}. To demonstrate this, one should use the next result
(see \cite{Kif}, \cite{L3} or \cite{L4}):

\begin{proposition}
Let $\mu _{f}$ be the linear equilibrium probability for a H\"{o}lder
potential $f:\Omega \rightarrow \mathbb{R}$. Given another H\"{o}lder
function $A:\Omega \rightarrow \mathbb{R}$ and $t\in \mathbb{R}$, we have 
\begin{equation}
\hat{c}_{f,A}(t):=\lim_{n\rightarrow \infty }{\frac{1}{n}}\log \int
e^{tnA_{n}(x)}\mu _{f}(\mathrm{d}x)=P(f+tA)-P(f),  \label{feij67}
\end{equation}%
where $A_{n}$, $n\in \mathbb{N}$, are the Birkhoff averages defined by (\ref%
{Eq Birk Av}).
\end{proposition}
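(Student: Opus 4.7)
The plan is to reduce everything to the Ruelle--Perron--Frobenius (RPF) theorem (Theorem~\ref{rer1}) applied to the two potentials $f$ and $f+tA$. Write $\psi_f, \nu_f, \lambda_f = e^{P(f)}$ for the eigendata of $\mathcal{L}_f$ (and similarly with subscript $f+tA$), normalized so that $\mu_f = \psi_f \nu_f$ is a probability. Recall $n A_n = S_n A := A + A\circ T + \cdots + A\circ T^{n-1}$. The first step is the algebraic identity
\begin{equation*}
\mathcal{L}_f^{\,n}\!\bigl(e^{t S_n A}\psi_f\bigr)(x) \;=\; \sum_{T^n y = x} e^{S_n f(y)}\,e^{t S_n A(y)}\,\psi_f(y) \;=\; \mathcal{L}_{f+tA}^{\,n}(\psi_f)(x),
\end{equation*}
which follows directly from the definition \eqref{popoiu1} of $\mathcal{L}_g$ and the identity $S_n(f+tA) = S_n f + t\, S_n A$.

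Next, integrate both sides against $\nu_f$ and use the dual relation $\int \mathcal{L}_f(\varphi)\,d\nu_f = \lambda_f \int \varphi\,d\nu_f$ iterated $n$ times, which gives
\begin{equation*}
\int e^{t n A_n(x)} \mu_f(\mathrm{d}x) \;=\; \int e^{t S_n A}\psi_f\,d\nu_f \;=\; \lambda_f^{-n} \int \mathcal{L}_{f+tA}^{\,n}(\psi_f)\,d\nu_f.
\end{equation*}
Now I would invoke the standard consequence of RPF applied to the Hölder potential $f+tA$: there is a spectral gap on Hölder functions, so $\lambda_{f+tA}^{-n}\,\mathcal{L}_{f+tA}^{\,n}(\psi_f)$ converges uniformly (in particular in $L^1(\nu_f)$) to $\psi_{f+tA}\cdot \nu_{f+tA}(\psi_f)$. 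Since $\psi_f, \psi_{f+tA} > 0$ and $\nu_{f+tA}$ is a probability, the constant
\begin{equation*}
K \;:=\; \nu_{f+tA}(\psi_f)\cdot \nu_f(\psi_{f+tA})
\end{equation*}
is strictly positive, and hence $\int \mathcal{L}_{f+tA}^{\,n}(\psi_f)\,d\nu_f = \lambda_{f+tA}^{\,n}\bigl(K + o(1)\bigr)$.

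Combining the two displays and taking $n^{-1}\log$ yields
\begin{equation*}
\frac{1}{n}\log \int e^{tn A_n(x)} \mu_f(\mathrm{d}x) \;=\; \log \lambda_{f+tA} - \log \lambda_f + \frac{1}{n}\log(K + o(1)),
\end{equation*}
and the last term vanishes as $n\to\infty$ since $K>0$. By Theorem~\ref{rer1}, $\log \lambda_{f+tA} = P(f+tA)$ and $\log \lambda_f = P(f)$, giving the claim. The only real obstacle is verifying that $K>0$, but this is immediate from the strict positivity of the RPF eigenfunctions and eigenmeasures (which is part of the theorem); all other ingredients are routine applications of the spectral gap for Hölder potentials and the duality between $\mathcal{L}_f$ and $\nu_f$.
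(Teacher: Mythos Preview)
Your proof is correct. The paper itself does not prove this proposition; it simply cites the references \cite{Kif}, \cite{L3}, \cite{L4} and states the result. Your argument is the standard self-contained route: rewrite the integral via the eigendata of $\mathcal{L}_f$, use the transfer-operator identity $\mathcal{L}_f^{\,n}(e^{tS_nA}\varphi)=\mathcal{L}_{f+tA}^{\,n}(\varphi)$, and then invoke the uniform convergence $\lambda_{f+tA}^{-n}\mathcal{L}_{f+tA}^{\,n}(\psi_f)\to \psi_{f+tA}\,\nu_{f+tA}(\psi_f)$ from the Ruelle--Perron--Frobenius theorem. The positivity of the limiting constant $K=\nu_{f+tA}(\psi_f)\,\nu_f(\psi_{f+tA})$ is indeed immediate from the strict positivity of the eigenfunctions together with the fact that the eigenprobabilities on the full shift have full support, so nothing is missing there. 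This is essentially the argument one finds in the cited references, and it is exactly what the paper later uses in a pointwise (rather than integrated) form in Lemma~\ref{tosou} and the surrounding computations of Section~\ref{laplace}.
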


\noindent Compare this assertion with (\ref{feij502}).

From now on we will assume that $f$ is normalized, that is, $\mathcal{L}%
_{f}(1)=1$. Proceeding in exactly the same way as with the maximal entropy
probability $\mu $ (cf. (\ref{binc78})),\ for each $n\in \mathbb{N}$, let $%
\mu _{n}^{f,A}$ be the probability measure on $\mathbb{R}$ such that, for
any open interval $O\subseteq \mathbb{R}$, 
\begin{equation}
\mu _{n}^{f,A}(O)=\mu _{f}\left( \left\{ z\text{ }\mid \,A_{n}(z)\in
O\right\} \right) .  \label{binc781}
\end{equation}%
In this case, the large deviation rate function $I_{f,A}$ is the Legendre
transform of $\hat{c}_{f,A}$. Compare indeed (\ref{feij67}) with (\ref%
{feij50}) and (\ref{feij68}). Similar to the special case $f=-\log d$
discussed above, a simple computation using Theorem \ref{rer1} shows that%
\begin{eqnarray}
I_{f,A}(x) &=&tx-\hat{c}_{f,A}(t)=tx-P(f+tA)+P(f)  \notag \\
&=&tx-\log \lambda _{f+tA}+P(f)  \label{binc78x}
\end{eqnarray}%
for each real parameter $t$ satisfying the self-consistency condition 
\begin{equation}
\hat{c}_{f,A}^{\prime }(t)=x=\frac{d\,P(f+tA)}{dt}=\mu _{f+tA}(A)=\int
A(x)\mu _{f+tA}(\mathrm{d}x),  \label{uut}
\end{equation}%
where $\mu _{f+tA}$ is the linear equilibrium probability for the potential $%
f+t\,A$. Similar to (\ref{myt68})--(\ref{myt68bis}), for $t$ satisfying %
\eqref{uut}, we infer from (\ref{binc78x}) that 
\begin{equation}
I_{f,A}(x)=P(f)-h(\mu _{f+tA})-\mu _{f+tA}(f)\geq 0,  \label{binc78xz}
\end{equation}%
thanks to the definition (\ref{iore}) of the linear pressure of the
(normalized) H\"{o}lder potential $f:\Omega \rightarrow \mathbb{R}$. Note,
however, that 
\begin{equation*}
P(\log d+t\,A)=\log d+P(t\,A)=\log d+\log \lambda _{tA},
\end{equation*}%
but, in general, $P(f+tA)\neq P(f)+P(tA)$.

Using the above facts, in particular \eqref{binc78xz}, our arguments in
Sections \ref{Bogo} and \ref{mae} can be adapted to replace the maximal
entropy probability $\mu $ with any linear equilibrium probability $\mu _{f}$
associated with a (normalized) H\"{o}lder potential $f$. We will leave it to
the interested reader to work out the details, and will only provide the
relevant information.

\begin{remark}
\label{iimp}If one replaces $\mu $ with $\mu _{f}$ as above, given a convex
or concave function $F$, for nonlinear cases, one shall consider the
variational problem 
\begin{equation}
\sup_{\rho \in \mathcal{P}(T)}\left\{ \rho (f)+F(\rho (A))+h(\rho )\right\} ,
\label{bel33}
\end{equation}%
which generalizes (\ref{Kbom}) (see also (\ref{kjew})) beyond the special
choice $f=-\log d$. Our focus here is the original problem (\ref{Kbom}).
Questions related to the more general choice of probability $\mu _{f}$ will
be further discussed in Section \ref{conj} (see (\ref{xiux23})).
\end{remark}

\begin{remark}
\label{level-2 large deviations} Within the level-2 large deviations,
similar to the original case $f=-\log d$, considering the empirical
probabilities $\sum_{j=0}^{n-1}\delta _{T^{j}(z)}$ for the equilibrium
probability $\mu _{f}$, the associated large deviation rate function is now 
\begin{equation}
I(\rho )=P(f)-h(\rho )-\rho (f)\geq 0,  \label{biz1}
\end{equation}%
where, as before, $h(\rho )$ is the entropy of the invariant probability $%
\rho $ taken as $\infty $ for non-invariant probabilities. See \cite{Comman}%
, \cite{Kif}, \cite{ComLe} and \cite{Comman1}. Compare with Equation (\ref%
{binc78xz}) and Remark \ref{mmx}.
\end{remark}

\subsection{Quadratic mean-field probabilities\label{Quadratic mean-field
probabilities}}

In Section \ref{conj} we consider a related but different problem. Given $%
\beta >0$ and two H\"{o}lder functions $g,f:\Omega \rightarrow \mathbb{R}$,
we will be interested in the weak$^{\ast }$ limit probability measure $%
\mathfrak{m}=\mathfrak{m}_{\beta ,f,g}$ on $\Omega $, which, for any
continuous (real-valued) function $\psi \in C(\Omega )$, satisfies 
\begin{equation*}
\mathfrak{m}(\psi )=\int \psi (x)\mathfrak{m}(\mathrm{d}x)=
\end{equation*}
\begin{equation}
\lim_{n\rightarrow \infty }\frac{\int \psi (x)e^{\frac{\beta n}{2}%
g_{n}(x)^{2}}\mu _{f}(\mathrm{d}x)}{\int e^{\frac{\beta n}{2}%
g_{n}(x)^{2}}\mu _{f}(\mathrm{d}x)}=\lim_{n\rightarrow \infty }\frac{\mu
_{f}\left( \psi e^{\frac{\beta n}{2}g_{n}^{2}}\right) }{\mu _{f}\left( e^{%
\frac{\beta n}{2}g_{n}^{2}}\right) },  \label{xiux23}
\end{equation}%
where, for any $\varphi \in C(\Omega )$, we recall that $\varphi _{n}$, $%
n\in \mathbb{N}$, are the so-called the Birkhoff averages defined by (\ref%
{Eq Birk Av}) and $\mu _{f}$ is the linear equilibrium probability for the H%
\"{o}lder potential $f$. Notice that the limit probability $\mathfrak{m}$
(when it exists) is not necessarily $T$-invariant.

\begin{definition}
\label{loloy}We call such a weak$^{\ast }$ limit $\mathfrak{m}=\mathfrak{m}%
_{\beta ,f,g}$ the \textit{quadratic mean-field Gibbs probability} for $%
\beta ,\mu _{f}$ and $g$.
\end{definition}

\begin{definition}
\label{gog1}We say that there is no quadratic mean-field Gibbs phase
transition for the H\"{o}lder potentials $f,g$ and parameter $\beta \in 
\mathbb{R}$, when the weak$^{\ast }$ limit \eqref{xiux23} exist and is equal
to the eigenprobability of the adjoint of the Ruelle operator for some H\"{o}%
lder potential.
\end{definition}

\begin{definition}
\label{gogrexx}We say that a finite mean-field Gibbs phase transition takes
place for the H\"{o}lder potentials $f,g$ and parameter $\beta >0$ if the
corresponding $\mathfrak{m}$ is a non-trivial convex combination of
eigenprobabilities for different (not cohomologous\footnote{%
That is, the difference $f-g$ is not of the form $A\circ T-A$ for some H\"{o}%
lder potential $A$.}) H\"{o}lder potentials. That is, for a finite sequence
of H\"{o}lder potentials, $f_{j}$, that are not cohomologous to each other
and strictly positive constants, $\alpha _{j}$, whose sum is $1$,%
\begin{equation}
\mathfrak{m}(\psi )=\sum_{j}\alpha _{j}\nu _{f_{j}}\left( \psi \right)
\label{be1}
\end{equation}%
for any continuous function $\psi \in C(\Omega )$.
\end{definition}

As a working example, we will consider later in Section \ref{conj} the
maximal entropy probability $\mu $ and a H\"{o}lder potential $g:\{-1,1\}^{%
\mathbb{N}}\rightarrow \mathbb{R}$ of the form 
\begin{equation}
g(x)=g(x_{1},x_{2},.\ldots ,x_{n},\ldots )=\sum_{n=1}^{\infty }a_{n}\,x_{n},
\label{binc2xxx}
\end{equation}%
where $a_{n}$ is a sequence converging exponentially fast to zero.

The following result, which is proved in Section \ref{conj}, is a
consequence of the Ruelle(-Perron-Frobenius) theorem (Theorem \ref{rer1}):

\begin{theorem}
\label{otwr}If the potential $g$ defined by (\ref{binc2xxx}) is not zero,
given $\beta >0$ and $f=-\log 2$, one has that 
\begin{equation}
\mathfrak{m}=\mathfrak{m}_{\beta ,f,g}=\frac{\mu \left( h_{f+\beta
t_{1}g}\right) \nu _{f+\beta t_{1}g}+\mu \left( h_{f+\beta t_{2}g}\right)
\nu _{f+\beta t_{2}g}}{\mu \left( h_{f+\beta t_{1}g}\right) +\mu \left(
h_{f+\beta t_{2}g}\right) },  \label{xiuxxx}
\end{equation}%
where $t_{1},t_{2}$ are the two self-consistent parameters (see Remarks \ref%
{poiu} and \ref{yyx} and discussions of Section \ref{mae}), $\mu $ is the
maximal entropy probability and $h_{f+\beta t_{j}g}$, $\nu _{f+\beta t_{j}g}$%
, $j=1,2$, are respectively the main eigenfunction and the eigenprobability
for the Ruelle operator $\mathcal{L}_{f+\beta t_{j}g}$.
\end{theorem}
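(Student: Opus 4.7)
The strategy is to reduce (\ref{xiux23}) to the asymptotic analysis of a one-dimensional integral via the Hubbard--Stratonovich (Gaussian) trick, then apply the Ruelle--Perron--Frobenius theorem (Theorem \ref{rer1}) to the arising transfer-operator iterates, and close with a Laplace (saddle-point) computation whose critical-point equation is exactly the self-consistency condition $\mu_{\beta t g}(g)=t$ discussed above. The two self-consistent parameters $t_1,t_2$ in the statement appear as the two global maximizers of the resulting Laplace exponent, while the amplitudes $\mu(h_{f+\beta t_{j}g})$ and $\nu_{f+\beta t_{j}g}(\psi)$ arise from the leading-order Ruelle--Perron--Frobenius asymptotics.

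First, I would insert the identity
$$e^{\frac{\beta n}{2}g_{n}(x)^{2}}=\sqrt{\frac{\beta n}{2\pi}}\int_{\mathbb{R}}e^{-\frac{\beta n}{2}t^{2}+\beta t\,S_{n}g(x)}\,dt$$
(with $S_{n}g:=n g_{n}$ the Birkhoff sum) into the numerator $N_{n}(\psi):=\int\psi\,e^{\frac{\beta n}{2}g_{n}^{2}}\,\mathrm{d}\mu$ and denominator $Z_{n}:=\int e^{\frac{\beta n}{2}g_{n}^{2}}\,\mathrm{d}\mu$ of (\ref{xiux23}). Since $f=-\log 2$ is normalized, $\mathcal{L}_{f}^{\ast}\mu=\mu$; combined with the pointwise identity $\mathcal{L}_{f}^{n}(\psi\,e^{\beta t S_{n}g})=\mathcal{L}_{f+\beta t g}^{n}(\psi)$ (which follows by unfolding the definition of $\mathcal{L}$ and using the additivity $S_{n}(f+\beta t g)=S_{n}f+\beta t\,S_{n}g$), this gives
$$\int\psi\,e^{\beta t S_{n}g(x)}\,\mu(\mathrm{d}x)=\int\mathcal{L}_{B_{t}}^{n}(\psi)(x)\,\mu(\mathrm{d}x),\qquad B_{t}:=f+\beta t g.$$
Theorem \ref{rer1} applied to $B_{t}$ then provides $\lambda_{B_{t}}^{-n}\mathcal{L}_{B_{t}}^{n}(\psi)\to h_{B_{t}}\,\nu_{B_{t}}(\psi)/\nu_{B_{t}}(h_{B_{t}})$ uniformly in $t$ on compact sets, with $\log\lambda_{B_{t}}=P(B_{t})=P(\beta t g)-\log 2$.

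Setting $G(t):=-\frac{\beta}{2}t^{2}+P(\beta t g)-\log 2$, the previous step yields
$$N_{n}(\psi)\sim\sqrt{\frac{\beta n}{2\pi}}\int_{\mathbb{R}}e^{n G(t)}\,\frac{\mu(h_{B_{t}})\,\nu_{B_{t}}(\psi)}{\nu_{B_{t}}(h_{B_{t}})}\,dt,$$
and the analogous identity for $Z_{n}$ with $\psi\equiv 1$. Equation (\ref{outr}) then identifies the critical points of $G$ via $G'(t)=\beta(\mu_{\beta t g}(g)-t)$ as the solutions of the self-consistency equation. By the odd symmetry of $g$ in (\ref{binc2xxx}) and the reflection invariance of $\mu$ under $\sigma(x):=-x$, the two global maximizers form a pair $t_{1}=-t_{2}$ with $G(t_{1})=G(t_{2})$ and $G''(t_{1})=G''(t_{2})<0$; moreover the conjugation $B_{-t}=B_{t}\circ\sigma$ gives $h_{B_{-t}}=h_{B_{t}}\circ\sigma$ and $\nu_{B_{-t}}=\sigma_{\ast}\nu_{B_{t}}$, so that $\nu_{B_{t_{1}}}(h_{B_{t_{1}}})=\nu_{B_{t_{2}}}(h_{B_{t_{2}}})$. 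A standard Laplace expansion around each maximum $t_{j}$ contributes a factor $\sqrt{2\pi/(n|G''(t_{j})|)}\,e^{n G(t_{j})}$ times the amplitude at $t_{j}$; the common Gaussian prefactors and the common normalization $\nu_{B_{t_{j}}}(h_{B_{t_{j}}})$ cancel when one forms the ratio $N_{n}(\psi)/Z_{n}$, yielding exactly (\ref{xiuxxx}).

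The main technical obstacle is to upgrade the Ruelle--Perron--Frobenius convergence to one that is uniform in $t$ on compact sets, so that the $n\to\infty$ limit and the $t$-integration may be interchanged, and then to control the tails $|t|\geq T$ of the $t$-integral. Uniformity follows from the real-analytic dependence of $\lambda_{B_{t}},h_{B_{t}},\nu_{B_{t}}$ on $t$ (cf.\ Proposition 4.7 in \cite{PP}) together with the stability of the spectral gap of $\mathcal{L}_{B_{t}}$ under small H\"{o}lder perturbations. The tails are handled by the crude bound $P(\beta t g)\leq\beta|t|\,\Vert g\Vert_{\infty}$, which shows that the quadratic penalty $-\frac{\beta}{2}t^{2}$ dominates the at-most-linear growth of $P(\beta t g)$ at infinity (consistent with (\ref{tryu17})) and therefore localizes the contribution of the $t$-integral to any prescribed neighborhood of $\{t_{1},t_{2}\}$.
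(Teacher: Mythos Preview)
Your proposal is correct and follows essentially the same route as the paper: Hubbard--Stratonovich linearization, the transfer-operator identity $\mathcal{L}_{f}^{n}(\psi e^{\beta t S_{n}g})=\mathcal{L}_{f+\beta tg}^{n}(\psi)$ together with $\mathcal{L}_{f}^{\ast}\mu=\mu$, uniform Ruelle--Perron--Frobenius asymptotics on compact $t$-sets (the paper invokes the lower semicontinuity of the spectral gap from \cite{Henn}, which is your ``stability of the spectral gap''), tail truncation via the at-most-linear growth of $t\mapsto P(\beta tg)$ against the quadratic penalty, and finally the Laplace method around the symmetric pair $t_{1}=-t_{2}$ with $G(t_{1})=G(t_{2})$ and $G''(t_{1})=G''(t_{2})$. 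The only cosmetic difference is your normalization $\nu_{B_{t}}(h_{B_{t}})$ in the RPF limit, which equals $1$ under the paper's convention (Theorem \ref{rer1}) and so drops out exactly as you say.
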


\noindent All these objects -- $t_{1}$, $t_{2}$, $h_{f+\beta t_{1}g}$, $\nu
_{f+\beta t_{2}g}$ -- can be explicitly computed and we can show the
occurrence of a finite mean-field Gibbs phase transition. In fact, in Remark %
\ref{boex} we present a case where there is no finite mean-field Gibbs phase
transition, and in Remark \ref{yyx}, a case where there exists.

To prove this theorem, we adapt the arguments of Section 2 of \cite{LeWa1}.
See Section \ref{conj} where we consider a more general setting, in which $%
\mu $ is replaced with equilibrium probabilities $\mu _{f}$ of general H\"{o}%
lder potentials $f$.

A remarkable fact is that for $f=-\log 2$ and a potential $g$ as in %
\eqref{binc2xxx}, the quadratic equilibrium probabilities (as mentioned
before in \eqref{Kibom}), that is the $T$-invariant probabilities maximizing 
$\mathfrak{P}_{2}(g)$, are the $T$-invariant probabilities $\mu _{f+\beta
t_{j}g}$, $j=1,2$, where $t_{j},$ $j=1,2$, satisfy the self-consistency
conditions as above.

In this paper, we also study a second type of quadratic mean-field
probabilities: Let $g:\Omega \rightarrow \mathbb{R}$ be again any fixed H%
\"{o}lder potential. For all $\beta >0$ and $n\in \mathbb{N}$, define the
probability measure $\mathfrak{M}^{(n)}$ on $\Omega $ by 
\begin{equation}
\mathfrak{M}^{(n)}(\psi )=\mathfrak{M}_{g,\beta }^{(n)}(\psi ):=\frac{\mu
\left( \psi _{n}e^{\frac{\beta n}{2}g_{n}^{2}}\right) }{\mu \left( e^{\frac{%
\beta n}{2}g_{n}^{2}}\right) }=\frac{\int \psi _{n}\left( x\right) e^{\frac{%
\beta n}{2}g_{n}\left( x\right) ^{2}}\mu (\mathrm{d}x)}{\int e^{\frac{\beta n%
}{2}g_{n}\left( x\right) ^{2}}\mu (\mathrm{d}x)}  \label{q1a1}
\end{equation}%
for any continuous (real-valued) function $\psi \in C(\Omega )$, where, for
any $\varphi \in C(\Omega )$, we recall again that $\varphi _{n}$, $n\in 
\mathbb{N}$, are the so-called the Birkhoff averages defined by (\ref{Eq
Birk Av}).

In Section \ref{QLVE}, we will be interested in\ weak$^{\ast }$ limits of
convergent subsequences $\mathfrak{M}^{(n_{k})}\rightarrow \mathfrak{M}%
^{\infty }$, a problem different from the one addressed in \eqref{xiux23}.

\begin{definition}
\label{dedeus} Any probability $\mathfrak{M}^{\infty }=\mathfrak{M}_{g,\beta
}^{\infty }$, which is the weak$^{\ast }$ limit of a convergent subsequence $%
\mathfrak{M}^{(n_{k})}$, $k\rightarrow \infty $, is called here a \textit{%
quadratic mean-field equilibrium probability} for the pair $g,\beta $.
\end{definition}

\noindent We will show in Section \ref{QLVE} the following statements:

\begin{theorem}
\label{kde}Given a H\"{o}lder potential $g:\Omega \rightarrow \mathbb{R}$, a
quadratic mean-field equilibrium probability is always $T$-invariant, and it
is in the closed convex hull of the \textit{nonlinear (quadratic)}
equilibrium probabilities for $g$.
\end{theorem}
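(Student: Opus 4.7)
The plan is to rewrite each $\mathfrak{M}^{(n)}$ as a barycenter on $\mathcal{P}(\Omega)$ of a tilted probability whose base satisfies the level-2 LDP under $\mu$, and then use Varadhan's lemma to show that those tilted probabilities concentrate on the set of quadratic equilibrium probabilities for $g$. For the $T$-invariance of $\mathfrak{M}^{\infty }$, I first note that the numerator of \eqref{q1a1} involves the Birkhoff average $\psi _{n}$, and $(\psi \circ T)_{n}-\psi _{n}=n^{-1}(\psi \circ T^{n}-\psi )$ has uniform norm at most $2\Vert \psi \Vert _{\infty }/n$. Plugged into \eqref{q1a1}, this yields $|\mathfrak{M}^{(n)}(\psi \circ T)-\mathfrak{M}^{(n)}(\psi )|\leq 2\Vert \psi \Vert _{\infty }/n$, which tends to $0$ along the defining subsequence $(n_{k})$; hence $\mathfrak{M}^{\infty }(\psi \circ T)=\mathfrak{M}^{\infty }(\psi )$ for every $\psi \in C(\Omega )$.

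Next, introduce the empirical-measure map $L_{n}:\Omega \rightarrow \mathcal{P}(\Omega )$, $L_{n}(x):=n^{-1}\sum_{j=0}^{n-1}\delta _{T^{j}x}$, so that $\psi _{n}(x)=L_{n}(x)(\psi )$ and $g_{n}(x)=L_{n}(x)(g)$. Setting $Q_{n}:=(L_{n})_{\ast }\mu $ on $\mathcal{P}(\Omega )$ and $\tilde{Q}_{n}(d\rho )\propto e^{\frac{\beta n}{2}\rho (g)^{2}}Q_{n}(d\rho )$, the definition \eqref{q1a1} rewrites as
\[
\mathfrak{M}^{(n)}(\psi )=\int_{\mathcal{P}(\Omega )}\rho (\psi )\,\tilde{Q}_{n}(d\rho ),
\]
so each $\mathfrak{M}^{(n)}$ is the barycenter of $\tilde{Q}_{n}$. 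By Remark \ref{mmx}, $(Q_{n})_{n}$ satisfies an LDP on the compact space $\mathcal{P}(\Omega )$ with good rate $I(\rho )=\log d-h(\rho )$ on $\mathcal{P}(T)$ and $+\infty $ otherwise. Since $\rho \mapsto \tfrac{\beta }{2}\rho (g)^{2}$ is bounded and weak$^{\ast }$-continuous, Varadhan's lemma yields
\[
\lim_{n\rightarrow \infty }\frac{1}{n}\log \int e^{\frac{\beta n}{2}\rho (g)^{2}}Q_{n}(d\rho )=\sup_{\rho \in \mathcal{P}(T)}\Bigl\{\tfrac{\beta }{2}\rho (g)^{2}+h(\rho )-\log d\Bigr\}=:S_{\beta }(g).
\]

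Let $\mathcal{E}$ denote the (nonempty, compact) set of maximizers above; these are precisely the nonlinear (quadratic) equilibrium probabilities for $g$ associated with $F(x)=\beta x^{2}/2$. The functional $\Phi (\rho ):=\tfrac{\beta }{2}\rho (g)^{2}+h(\rho )$ is weak$^{\ast }$-upper semi-continuous on the compact set $\mathcal{P}(T)$ (from upper semi-continuity of $h$ and continuity of $\rho \mapsto \rho (g)$), so for every weak$^{\ast }$-open $U\supseteq \mathcal{E}$ one has $\sup_{\mathcal{P}(T)\setminus U}\Phi <\sup_{\mathcal{P}(T)}\Phi $. Combining the LDP upper bound for $Q_{n}(U^{c})$ with the Varadhan asymptotic for the denominator forces $\tilde{Q}_{n}(U^{c})\leq e^{-c(U)n+o(n)}$ with $c(U)>0$. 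By compactness of $\mathcal{P}(\mathcal{P}(\Omega ))$, a further extraction of $(n_{k})$ gives $\tilde{Q}_{n_{k}}\rightarrow \tilde{Q}^{\infty }$ weakly$^{\ast }$ with $\tilde{Q}^{\infty }(\mathcal{E})=1$, and weak$^{\ast }$-continuity of $\rho \mapsto \rho (\psi )$ then yields
\[
\mathfrak{M}^{\infty }(\psi )=\int_{\mathcal{E}}\rho (\psi )\,\tilde{Q}^{\infty }(d\rho ),\qquad \psi \in C(\Omega ),
\]
which exhibits $\mathfrak{M}^{\infty }$ as the barycenter of a probability measure supported on $\mathcal{E}$, hence as an element of the closed convex hull of $\mathcal{E}$.

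The hard part is the concentration bound $\tilde{Q}_{n}(U^{c})\rightarrow 0$ exponentially: it rests on weak$^{\ast }$-upper semi-continuity of the Kolmogorov--Sinai entropy together with compactness of $\mathcal{P}(T)$, without which the supremum of $\Phi $ over $\mathcal{P}(T)\setminus U$ need not be strictly below $S_{\beta }(g)$ and Varadhan's lemma would not propagate to exponential decay of the tilted mass. The remaining pieces --- the empirical-measure rewriting of \eqref{q1a1}, the level-2 LDP of Remark \ref{mmx}, Varadhan's lemma, and the convex-hull identification of the limit --- are routine once the concentration step is in hand.
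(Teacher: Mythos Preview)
Your proof is correct and takes a genuinely different route from the paper's. The paper argues via convex analysis: it introduces the finite-volume pressures $p^{(n)}(\psi)=\frac{1}{n}\log\mu(e^{n(\frac{\beta}{2}g_n^2+\psi_n)})$, observes that $\mathfrak{M}^{(n)}$ is the unique tangent functional to $p^{(n)}$ at $0$, passes to the limit to show that $\mathfrak{M}^{\infty}$ is tangent to $p^{(\infty)}(\psi)=-\inf_{\rho}(\mathfrak{f}^{+}(\rho)-\rho(\psi))$, and then invokes a general fact about Legendre--Fenchel duality to conclude that $\mathfrak{M}^{\infty}$ minimizes the \emph{affine} functional $\mathfrak{f}^{+}(\rho)=-\beta\Delta_g(\rho)-h(\rho)+\log d$. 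The final step uses the $\Gamma$-regularization relation between $\mathfrak{f}^{+}$ and the nonlinear $\mathfrak{g}^{+}$ (Theorem~\ref{erte} and \cite{BP3}) to place minimizers of $\mathfrak{f}^{+}$ in the closed convex hull of those of $\mathfrak{g}^{+}$.

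Your approach instead pushes everything through the level-2 LDP: you rewrite $\mathfrak{M}^{(n)}$ as the barycenter of the tilted empirical law $\tilde{Q}_n$ on $\mathcal{P}(\Omega)$, and use Varadhan's lemma together with upper semicontinuity of $h$ to show directly that $\tilde{Q}_n$ concentrates on the set $\mathcal{E}$ of quadratic equilibrium probabilities. This is more elementary and self-contained --- it avoids the detour through $\mathfrak{f}^{+}$, $\Delta_g$, and the $\Gamma$-regularization machinery --- and it explicitly produces the representing measure $\tilde{Q}^{\infty}$ on $\mathcal{E}$. The paper's approach, on the other hand, yields the extra information that $\mathfrak{M}^{\infty}$ minimizes the affine functional $\mathfrak{f}^{+}$ (not just the nonlinear one via convex-hull membership), and its convex-analytic skeleton is what the authors generalize in \cite{BPL} beyond the quadratic case; your LDP argument would require a bounded continuous $F(\rho(A))$ at the level-2 stage, which is equally available but organized differently.
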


\begin{corollary}
\label{kde coro}If there is a non-ergodic mean-field equilibrium probability
then the \textit{nonlinear (quadratic)} equilibrium probabilities for a H%
\"{o}lder potential $g:\Omega \rightarrow \mathbb{R}$ is non-unique, i.e., a
(nonlinear) phase transition takes place.
\end{corollary}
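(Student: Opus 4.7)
The plan is to prove the corollary by contraposition: assume the nonlinear (quadratic) equilibrium probability for $g$ is unique, and deduce that every quadratic mean-field equilibrium probability $\mathfrak{M}^{\infty}$ in the sense of Definition \ref{dedeus} is ergodic; this contradicts the hypothesis of the corollary and so yields the desired nonuniqueness.

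First, let $\mathfrak{M}^{\infty}$ be any quadratic mean-field equilibrium probability. Theorem \ref{kde} places $\mathfrak{M}^{\infty}$ in the closed convex hull of the set of nonlinear quadratic equilibrium probabilities for $g$. Under the uniqueness assumption, that set reduces to a singleton $\{\rho^{\ast}\}$, and the closed convex hull of a single point is that point itself; hence $\mathfrak{M}^{\infty}=\rho^{\ast}$.

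Next, I would invoke the self-consistency characterization of quadratic equilibria stated in the theorem containing \eqref{utsx} and proved via Bogoliubov's variational principle in Section \ref{Bogo}: every $T$-invariant probability maximizing $\mathfrak{P}_{2}(g)$ is the (unique) linear equilibrium probability $\mu_{tg}$ for some real $t$ solving $\mu_{tg}(g)=t$. Since $g$ is H\"older, $tg$ is H\"older, so the Ruelle--Perron--Frobenius theorem (Theorem \ref{rer1}) guarantees that $\mu_{tg}$ possesses a spectral gap, hence is mixing and in particular ergodic. Consequently $\rho^{\ast}=\mu_{tg}$ is ergodic, so $\mathfrak{M}^{\infty}=\rho^{\ast}$ is ergodic, contradicting the hypothesis that some mean-field equilibrium probability is non-ergodic.

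The corollary is thus a direct consequence of Theorem \ref{kde} combined with the self-consistency characterization of quadratic equilibria, so no further technical work is required at the level of the corollary itself. The main obstacle sits upstream, in the proof of Theorem \ref{kde}: one must verify that every subsequential weak$^{\ast}$-limit of the probabilities $\mathfrak{M}^{(n)}$ defined in \eqref{q1a1} indeed lies in the closed convex hull of nonlinear quadratic equilibria. The contrapositive form is the conceptual heart of the statement: the H\"older rigidity (ergodicity of each $\mu_{tg}$) forces a nonlinear phase transition whenever the limiting procedure \eqref{q1a1} can produce a non-ergodic measure.
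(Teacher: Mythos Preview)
Your contrapositive argument is correct and is essentially the same deduction as the paper's, just packaged differently. The paper proves Theorem~\ref{kde} and Corollary~\ref{kde coro} together as Theorem~\ref{sdsdfsdf}: it shows that every mean-field equilibrium $\mathfrak{M}^{(\infty)}$ is a minimizer of the affine lower semicontinuous functional $\mathfrak{f}^{+}$, observes that the minimizer set is therefore a (nonempty) face of $\mathcal{P}(T)$, and concludes that a non-ergodic minimizer forces the face to contain at least two ergodic extreme points, which (since $\mathfrak{f}^{+}=\mathfrak{g}^{+}$ on ergodic measures) are distinct quadratic equilibria. Your route instead treats Theorem~\ref{kde} as a black box and argues that under uniqueness the closed convex hull collapses to the single point $\rho^{\ast}=\mu_{tg}$, which is ergodic by Ruelle--Perron--Frobenius. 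Both hinge on the same fact---quadratic equilibria for H\"older $g$ are linear equilibria for H\"older potentials and hence ergodic---so there is no substantive divergence; your formulation is slightly more transparent at the level of the corollary, while the paper's face argument is what actually drives the proof of Theorem~\ref{kde} itself.
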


Another problem related to nonlinear phase transitions is addressed in
Section \ref{Til}: Given continuous functions $A:\Omega \rightarrow \mathbb{R%
}$ and $F:\mathbb{R}\rightarrow \mathbb{R}$, and a natural number $n\in 
\mathbb{N}$, we define the probability $\mathrm{m}_{n}^{F,A}$ on $\mathbb{R}$
by 
\begin{equation}
\mathrm{m}_{n}^{F,A}(O)=\frac{\int_{O}e^{nF(x)}\mu _{n}^{A}(\mathrm{d}x)}{%
Z_{n}^{F,A}}  \label{zeno2}
\end{equation}%
for any any open interval $O\subseteq \mathbb{R}$, where $\mu _{n}^{A}$ is
the probability measure defined by \eqref{binc78} and%
\begin{equation*}
Z_{n}^{F,A}=\mu _{n}^{A}\left( e^{nF}\right) =\int e^{nF(x)}\mu _{n}^{A}(%
\mathrm{d}x).
\end{equation*}%
(Notice here that $\mathrm{m}_{n}^{F,A}$, $n\in \mathbb{N}$, are measures on 
$\mathbb{R}$, and not on $\Omega $ as in \eqref{xiux23}, (\ref{q1a1}) or in
Equation (12) of \cite{LeWa1}.) Then, an important question is how to
estimate the limit (or limit of subsequences) 
\begin{equation}
\lim_{n\rightarrow \infty }\mathrm{m}_{n}^{F,A}(O)=:\theta (O).
\label{zero31}
\end{equation}%
We will address it in Section \ref{Til} and show from the LDP\footnote{%
Recall that LDP\ refers to \textquotedblleft Large Deviation
Principle\textquotedblright .} tilting property that the exponential
convergence rate of the above limit is directly related to Bogoliubov's
variational principle (which, in turn, encodes nonlinear phase transitions).
Also, in this case, we will present explicit examples.

\section{Bogoliubov's variational principle}

\label{Bogo}

As before, we will consider in this section the symbolic space $\Omega
=\{1,2,\ldots ,d\}^{\mathbb{N}}$ for general $d\in \mathbb{N}$, along with
the action of the shift $T:\Omega \rightarrow \Omega $. Recall that $\mu $
is the maximal entropy probability, i.e., the equilibrium probability for
the constant potential $A=-\log d$. Throughout this section, a fixed H\"{o}%
lder potential $A:\Omega \rightarrow \mathbb{R}$ is considered.

Let $\Phi :\mathbb{R}\rightarrow \mathbb{R\cup \{\infty \}}$ be any function
and $\Phi ^{\ast }:\mathbb{R}\rightarrow \mathbb{R\cup \{\infty \}}$ its
Legendre-Fenchel transform, i.e., the convex lower semicontinuous function
defined by%
\begin{equation*}
\Phi ^{\ast }(s)=\sup_{x\in \mathbb{R}}\left\{ sx-\Phi (x)\right\} .
\end{equation*}%
Observe that, by Fenchel's theorem, if $\Phi $ is itself convex and lower
semicontinuous, then it is equal to its double Legendre-Fenchel transform,
that is, the Legendre-Fenchel transform of $\Phi ^{\ast }$ is nothing else
but the original function $\Phi $. In other words, the Legendre-Fenchel
transform defines an involution in the set of all convex lower
semicontinuous functions $\mathbb{R}\rightarrow \mathbb{R\cup \{\infty \}}$.

For the given H\"{o}lder potential $A$, let $I_{A}:\mathbb{R}\rightarrow 
\mathbb{R\cup \{\infty \}}$ be the Legrendre-Fenchel transform of the convex
continuous function 
\begin{equation*}
s\mapsto \hat{c}(s):=P(sA)-\log d,
\end{equation*}%
where $P(sA)$ is the pressure of the potential $sA$. See in particular
Equations (\ref{feij502bis0}) and (\ref{feij68}) of the previous section. In
particular, by Fenchel's theorem, $I_{A}^{\ast }(s)=\hat{c}(s)$, since $\hat{%
c}$ is continuous and convex.

As mentioned before, the distributions $\mu _{n}=\mu _{n}^{A}$, $n\in 
\mathbb{N}$, of the Birkhoff averages $A_{n}$ (see (\ref{binc78})) satisfy a
Large Deviation Principle (LDP), whose rate function is precisely $I_{A}$.
Then, Equation (\ref{feij502bis0}) can be rewritten as follows: for all $%
s\in \mathbb{R}$,%
\begin{equation*}
\hat{c}(s)=\lim_{n\rightarrow \infty }\frac{1}{n}\ln \left( \int e^{nsx}\mu
_{n}^{A}(\mathrm{d}x)\right) =\sup_{x\in \mathbb{R}}\left\{
sx-I_{A}(x)\right\} .
\end{equation*}%
Recall also (\ref{impi}), i.e., for any continuous functions $F:\mathbb{R}%
\rightarrow \mathbb{R}$ and $A:\Omega \rightarrow \mathbb{R}$, 
\begin{equation}
\hat{P}(F):=\lim_{n\rightarrow \infty }\frac{1}{n}\ln \left( \int
e^{nF(x)}\mu _{n}^{A}(\mathrm{d}x)\right) =\sup_{x\in \mathbb{R}}\left\{
F(x)-I_{A}(x)\right\} ,  \label{fgh}
\end{equation}%
thanks to the Varadhan(-Bryc) lemma. We show below that, up to an explicit
constant, this quantity is nothing else but the nonlinear pressure (\ref%
{Kbom}) (up to the constant $-\log d$), that is,%
\begin{equation*}
\hat{P}(F)=\mathfrak{P}_{F}-\log d
\end{equation*}%
with 
\begin{equation}
\mathfrak{P}_{F}=\mathfrak{P}_{F,A}:=\sup_{\rho \in \mathcal{P}(T)}\left\{
F(\rho (A))+h(\rho )\right\} ,  \label{Kbombis}
\end{equation}%
when the continuous function $F$\ is either convex or concave.

\begin{remark}
\label{hdf}Let $\mathcal{M}$ be the set of all finite measures on $\Omega $.
It is natural to extend the entropy $h(\rho )$ to those measures $\rho \in 
\mathcal{M}$ that are not $T$-invariant probabilities, just by assigning to
them the value $-\infty $, similar to what is done in Remark \ref{level-2
large deviations}. Then, when taking the supremum over $\rho \in \mathcal{M}$
in expressions containing the term $h(\rho )$, the elements $\rho \in 
\mathcal{M}$\ that are not $T$-invariant probabilities are simply
disregarded. This will be done tacitly for the rest of the article.
\end{remark}

\subsection{The convex case\label{The convex case}}

In fact, if $F$ is \emph{convex}, as in the example $F(x)=x^{2}/2$, then one
arrives at \textquotedblleft Bogoliubov's variational
principle\textquotedblright\ for the nonlinear pressure, by writing $F$ as
its double Legendre-Fenchel transform and by commuting two suprema: 
\begin{eqnarray*}
\hat{P}(F) &=&\sup_{x\in \mathbb{R}}\left\{ F(x)-I_{A}(x)\right\} \\
&=&\sup_{x\in \mathbb{R}}\left\{ \sup_{s\in \mathbb{R}}\left\{ xs-F^{\ast
}(s)\right\} -I_{A}(x)\right\} \\
&=&\sup_{s\in \mathbb{R}}\left\{ -F^{\ast }(s)+\sup_{x\in \mathbb{R}}\left\{
xs-I_{A}(x)\right\} \right\} \\
&=&\sup_{s\in \mathbb{R}}\left\{ -F^{\ast }(s)+P(sA)-\log d\right\} .
\end{eqnarray*}%
For example, if $F(x)=x^{2}/2$ then $F^{\ast }(s)=s^{2}/2$ and 
\begin{equation*}
\hat{P}(F)=\sup_{s\in \mathbb{R}}\left\{ P(sA)-s^{2}/2\right\} -\log d.
\end{equation*}

Using Bogoliubov's variational principle, i.e., the equality 
\begin{equation}
\hat{P}(F)=\sup_{s\in \mathbb{R}}\left( -F^{\ast }(s)+\hat{c}(s)\right)
=\sup_{s\in \mathbb{R}}\left( -F^{\ast }(s)+P(sA)-\log d\right) ,
\label{utq}
\end{equation}%
and writing the classical pressure $P(sA)$ defined by \eqref{iore} as the
Legendre-Fenchel transform 
\begin{equation*}
P(sA)=\sup_{\rho \in \mathcal{M}}\left\{ \rho (sA)+h(\rho )\right\} -\log d%
\text{ },
\end{equation*}%
of (minus) the entropy $h$ by meanwhile taking into account Remark \ref{hdf}%
, we arrive at the following representation of $\hat{P}(F)$ in terms of a
variational principle for finite measures: 
\begin{eqnarray}
\hat{P}(F) &=&\sup_{s\in \mathbb{R}}\left\{ -F^{\ast }(s)+\sup_{\rho \in 
\mathcal{M}}\left\{ s\rho (A)+h(\rho )\right\} -\log d\right\}
\label{dfdfdfddfdf} \\
&=&\sup_{\rho \in \mathcal{M}}\sup_{s\in \mathbb{R}}\left\{ -F^{\ast
}(s)+s\rho (A)+h(\rho )-\log d\right\}  \notag \\
&=&\sup_{\rho \in \mathcal{M}}\left\{ h(\rho )-\log d+\sup_{s\in \mathbb{R}%
}\left\{ s\rho (A)-F^{\ast }(s)\right\} \right\}  \notag \\
&=&\sup_{\rho \in \mathcal{M}}\left\{ F(\rho (A))+h(\rho )-\log d\right\} ,
\label{klir}
\end{eqnarray}%
provided that $F=F^{\ast \ast }$, like when $F$ is continuous and convex.
From Remark \ref{hdf}, observe that the last $\sup $ is attained in $%
\mathcal{P}(T)\subseteq \mathcal{M}$. It also follows from last equality
that 
\begin{equation*}
\hat{P}(F)=\mathfrak{P}_{F,A}-\log d,
\end{equation*}%
see Equation (\ref{Kbombis}) given just above.

We call the functional $\mathfrak{p}:\mathcal{M}\rightarrow \mathbb{R}\cup
\{\infty \}$ defined by%
\begin{equation}
\mathfrak{p}(\rho ):=F(\rho (A))+h(\rho )-\log d,\text{ \ \ }\rho \in 
\mathcal{M},  \label{eq press func convex}
\end{equation}%
the nonlinear pressure functional associated with $A$ and $F$. This
terminology is consistent with Definition 2.4 of \cite{Vele}. Observe that
maximizers of this functional are precisely the nonlinear equilibrium
probabilities associated to $A$ and $F$, as given by Definition \ref%
{nonlinear equilibrium probabilities}.

By using Bogoliubov's variational principle again, we will show that
nonlinear equilibrium probabilities are self-consistent linear equilibrium
probabilities for continuous and convex functions $F$: Let $\varpi \in 
\mathcal{P}(T)$ be a nonlinear equilibrium probability, that is, $\mathfrak{p%
}(\varpi )=\hat{P}(F)$, thanks to\ Equation (\ref{klir}). Assume
additionally that%
\begin{equation}
F(\varpi (A))=\sup_{s\in \mathbb{R}}\left\{ s\varpi (A)-F^{\ast }(s)\right\}
=\max_{s\in \mathbb{R}}\left\{ s\varpi (A)-F^{\ast }(s)\right\} ,
\label{hyp1}
\end{equation}%
that is, the supremum with respect to $s$\ is attained. In other words,
there is $\bar{s}\in \mathbb{R}$ maximizing 
\begin{equation*}
s\mapsto -F^{\ast }(s)+s\varpi (A)\text{ }.
\end{equation*}
Note that this trivially holds when 
\begin{equation*}
\lim_{|s|\rightarrow \infty }\left\vert \frac{F^{\ast }(s)}{s}\right\vert
=\infty \text{ }.
\end{equation*}%
Then, for any nonlinear equilibrium probability $\varpi \in \mathcal{P}(T)$,
provided such a $\bar{s}\in \mathbb{R}$ exists, we deduce from elementary
manipulations that 
\begin{equation*}
\hat{P}(F)=\varpi (\bar{s}A)-F^{\ast }(\bar{s})+h(\varpi )-\log d=-F^{\ast }(%
\bar{s})+\sup_{\rho \in \mathcal{M}}\left\{ \rho (\bar{s}A)+h(\rho )\right\}
-\log d,
\end{equation*}%
which implies that $\varpi $ is the unique equilibrium probability for the H%
\"{o}lder potential $\bar{s}A$.

We show next that $\bar{s}$ must be a solution to Bogoliubov's variational
problem. Observe first from the hypothesis (\ref{hyp1}) for a nonlinear
equilibrium probability $\varpi \in \mathcal{P}(T)$ that 
\begin{eqnarray}
\hat{P}(F) &=&\sup_{\rho \in \mathcal{M}}\sup_{s\in \mathbb{R}}\left\{
-F^{\ast }(s)+s\rho (A)+h(\rho )-\log d\right\}  \notag \\
&=&-F^{\ast }(\bar{s})+\varpi (\bar{s}A)+h(\varpi )-\log d  \notag \\
&=&-F^{\ast }(\bar{s})+\sup_{\rho \in \mathcal{M}}\left\{ \rho (\bar{s}%
A)+h(\rho )\right\} -\log d  \notag \\
&=&-F^{\ast }(\bar{s})+P(\bar{s}A)-\log d\text{ },  \label{eq Bogo legal 1}
\end{eqnarray}%
bearing in mind the definition of the linear pressure, Equation (\ref{iore}%
), and Remark \ref{hdf}. By Bogoliubov's variational principle (\ref{utq}),
we deduce that $\bar{s}$ is necessarily a maximizer of the function 
\begin{equation}
s\mapsto -F^{\ast }(s)+P(sA)-\log d=-F^{\ast }(s)+\log \lambda _{-\log
d+sA}\ .  \label{sisi0}
\end{equation}%
The last above equality is a straightforward consequence of Equation %
\eqref{iore43} and the Ruelle(-Perron-Frobenius) theorem (Theorem \ref{rer1}%
).

\begin{remark}
\label{sisim}Suppose that $A$ is such that $P(sA)=P(-sA)$ and $F^{\ast
}(s)=F^{\ast }(-s)$ for all $s\in \mathbb{R}$. In this case, we get that $%
\bar{s}$ maximizes (\ref{sisi0}) if and only if $-\bar{s}$ also has this
property. In particular, (\ref{sisi0}) has more than one maximizer when $%
\bar{s}>0$. In the case $F$ is quadratic (and convex), i.e., $F(x)=\beta
x^{2}/2$ for some $\beta >0$, we will provide examples of nonlinear phase
transitions, as done in Section \ref{mae}. In these examples, the nonlinear
equilibrium probabilities can even be explicitly determined.
\end{remark}

Now, if $F^{\ast }$ is differentiable, then we can infer from the above
observations, in particular the fact that $\varpi $ is the unique
equilibrium probability for the H\"{o}lder potential $\bar{s}A$, that%
\footnote{%
The equality $\varpi (A)=\left. dP(sA)/ds\right\vert _{s=\bar{s}}$ is a
consequence of the weak$^{\ast }$ compactness of the space of $T$-invariant
probabilities, the weak$^{\ast }$ upper semicontinuity of the entropy, and
the fact that the convex function $P$ is Gateaux differentiable, that is, it
has a unique tangent functional at any point. We omit the details.} 
\begin{equation}
\varpi (A)=\left. \frac{d}{ds}P(sA)\right\vert _{s=\bar{s}}=(F^{\ast
})^{\prime }(\bar{s}).  \label{ciir}
\end{equation}%
Assume now that $(F^{\ast })^{\prime }$ is injective, that is, $(F^{\ast
})^{\prime }$ is strictly increasing, meaning that $F^{\ast }$ is strictly
convex. Denote by $\chi $ the inverse of $(F^{\ast })^{\prime }$ on its
image. Then, we arrive at 
\begin{equation}
\left. \frac{d}{ds}P(sA)\right\vert _{s=\chi (\varpi (A))}=\varpi (A),
\label{geger}
\end{equation}%
which is a self-consistency condition saying that $\varpi $ is an
equilibrium probability for the potential $\chi (\varpi (A))A$.

Conversely, using the same assumptions on $F^{\ast }$ and similar arguments,
one shows that, for any solution $\bar{s}\in \mathbb{R}$ to Bogoliubov's
variational problem, that is, any maximizer of (\ref{sisi0}), there is a
unique linear equilibrium probability $\varpi $ for the potential $\bar{s}A$
satisfying $\bar{s}=\chi (\varpi (A))$\ and which is meanwhile a nonlinear
equilibrium probability, that is, it maximizes the nonlinear pressure $%
\mathfrak{p}$ defined by (\ref{eq press func convex}).

In the quadratic case $F(x)=x^{2}/2$, one has that $(F^{\ast })^{\prime
}(s)=s$. Thus, the self-consistency equation reads in this case: 
\begin{equation}
\mu _{\bar{s}A}\left( A\right) =\int A(x)\mu _{\bar{s}A}\left( \mathrm{d}%
x\right) =\bar{s},  \label{srt}
\end{equation}%
where $\mu _{\bar{s}A}$ is the unique linear equilibrium probability for the
H\"{o}lder potential $\bar{s}A$. In Sections \ref{qua} and \ref{mae}, this
special (quadratic) case is analyzed in detail. In particular, in Section %
\ref{mae} we give examples for which there is more than one solution $\bar{s}
$ to Equation \eqref{srt}.

If one considers the linear equilibrium probability $\mu _{f}$ of a general H%
\"{o}lder potential $f$ instead of the maximum entropy probability $\mu $,
which corresponds to the special choice $f=-\log d$, and the corresponding
changes in \eqref{binc78} and \eqref{binc78x}, then \eqref{klir} has to be
adapted, taking into account the new large deviation rate function %
\eqref{binc78xz}. This yields a more general version of equality (\ref{klir}%
):%
\begin{equation}
\hat{P}(F)=\sup_{\rho \in \mathcal{M}}\left\{ F(\rho (A))+h(\rho )+\rho
(f)-P(f)\right\} .  \label{bel1}
\end{equation}%
Correspondingly, the self-consistency conditions should now be given by the
critical points of the function 
\begin{equation}
s\mapsto -F^{\ast }(s)+P(f+sA)=-F^{\ast }(s)+\log \lambda _{f+sA},
\label{joq1}
\end{equation}%
instead of \eqref{sisi0}.

\subsection{The concave case}

\label{conc}

The argument applied to the convex case in Section \ref{The convex case}
cannot be directly used if $F$ is concave and needs further justification.
First, rather than $F$, we have to write $G=-F$, which is continuous and
convex for continuous and concave $F$, as its double Legendre-Fenchel
transform to obtain from (\ref{fgh}) the identity%
\begin{eqnarray*}
\hat{P}(F) &=&\sup_{x\in \mathbb{R}}\left\{ -G(x)-I_{A}(x)\right\} \\
&=&\sup_{x\in \mathbb{R}}\left\{ -\sup_{s\in \mathbb{R}}\left\{ xs-G^{\ast
}(s)\right\} -I_{A}(x)\right\} \\
&=&\sup_{x\in \mathbb{R}}\left\{ \inf_{s\in \mathbb{R}}\left\{ -xs+G^{\ast
}(s)\right\} -I_{A}(x)\right\} .
\end{eqnarray*}%
By commuting the infimum and the supremum, we would then arrive at 
\begin{eqnarray}
\hat{P}(F) &=&\inf_{s\in \mathbb{R}}\sup_{x\in \mathbb{R}}\left\{
-xs+G^{\ast }(s)-I_{A}(x)\right\}  \notag \\
&=&\inf_{s\in \mathbb{R}}\left\{ G^{\ast }(s)+\sup_{x\in \mathbb{R}}\left\{
-xs-I_{A}(x)\right\} \right\}  \notag \\
&=&\inf_{s\in \mathbb{R}}\left\{ G^{\ast }(s)+P(-sA)-\log d\right\}  \notag
\\
&=&\inf_{s\in \mathbb{R}}\left\{ G^{\ast }(-s)+P(sA)-\log d\right\} ,
\label{erer}
\end{eqnarray}%
which is Bogoliubov's variational principle for the concave nonlinear
pressure. To justify the commutation of $\inf $ and $\sup $ we will use
Sion's min-max theorem, which is presented below.

To this end, note first that the domain of the rate function $I_{A}$ is
nothing but the finite interval $(m_{A},M_{A})$, see Equation (\ref{domain I}%
). Therefore, 
\begin{equation*}
\hat{P}(F)=\sup_{x\in \mathbb{R}}\left\{ -G(x)-I_{A}(x)\right\} =\sup_{x\in
(m_{A},M_{A})}\left\{ -G(x)-I_{A}(x)\right\} .
\end{equation*}%
Thus, assuming (as done above for $F^{\ast }$) that $G^{\ast }$ grows faster
than linearly, that is, 
\begin{equation}
\lim_{|x|\rightarrow \infty }\left\vert \frac{G^{\ast }(x)}{x}\right\vert
=\infty ,  \label{rtytry}
\end{equation}%
there is a \emph{compact} convex subset $K\subseteq \mathrm{dom}(G^{\ast
})\subseteq \mathbb{R}$ such that, for all $x\in (m_{A},M_{A})$, 
\begin{equation*}
\inf_{s\in \mathbb{R}}\left\{ -xs+G^{\ast }(s)\right\} =\min_{s\in K}\left\{
-xs+G^{\ast }(s)\right\} .
\end{equation*}%
Therefore, in this case, we can write the following equalities:%
\begin{eqnarray}
\hat{P}(F) &=&\sup_{x\in (m_{A},M_{A})}\left\{ -G(x)-I_{A}(x)\right\}  \notag
\\
&=&\sup_{x\in (m_{A},M_{A})}\left\{ -\sup_{s\in \mathbb{R}}\left\{
xs-G^{\ast }(s)\right\} -I_{A}(x)\right\}  \notag \\
&=&\sup_{x\in (m_{A},M_{A})}\min_{s\in K}\left\{ -xs+G^{\ast
}(s)-I_{A}(x)\right\} .  \label{sdsdsd}
\end{eqnarray}%
Observe that, for all $x\in (m_{A},M_{A})$, the mapping 
\begin{equation*}
s\mapsto -xs+G^{\ast }(s)-I_{A}(x)
\end{equation*}%
from the compact convex subset $K$ to $\mathbb{R}$ is convex and lower
semicontinuous, while, for all $s\in K$, the mapping 
\begin{equation*}
x\mapsto -xs+G^{\ast }(s)-I_{A}(x)\text{ },
\end{equation*}%
from $(m_{A},M_{A})$ to $\mathbb{R}$ is concave and upper semicontinuous.

Now we recall Sion's min-max theorem: Given a convex set $C$, a function $%
g:C\rightarrow \mathbb{R}$ is \textquotedblleft
quasi-convex\textquotedblright\ if all its level sets $g^{-1}((-\infty
,\alpha ))$, $\alpha \in \mathbb{R}$, are convex. Clearly, any convex
function is quasi-convex. (The converse is however, not true: for instance,
the function $g:\mathbb{R}\rightarrow \mathbb{R}$, $g(x)=|x|^{\frac{1}{2}}$,
is quasi-convex, but not convex.). $g:C\rightarrow \mathbb{R}$ is
\textquotedblleft quasi-concave\textquotedblright\ if $-g$ is quasi-convex.
Again, concave functions are special cases of quasi-concave ones. Having
these two definitions in mind, we can state Sion's min-max theorem:

\begin{proposition}[Sion's min-max theorem]
\label{prop Sion}Let $X$ and $Y$ be real topological vector spaces, $%
K\subseteq X$ a compact convex set, $C\subseteq Y$ a convex set and $%
f:K\times C\rightarrow \mathbb{R}$ a function such that, for all $(x,y)\in
K\times C$, $f(\cdot ,y):K\rightarrow \mathbb{R}$ is lower semicontinuous
and quasi-convex, whereas $f(x,\cdot ):C\rightarrow \mathbb{R}$ is upper
semicontinuous and quasi-concave. Then one has the following equality: 
\begin{equation*}
\min_{x\in K}\sup_{y\in C}f(x,y)=\sup_{y\in C}\min_{x\in K}f(x,y).
\end{equation*}%
In particular, $f$ has a conservative value, that is,%
\begin{equation*}
\inf_{x\in K}\sup_{y\in C}f(x,y)=\sup_{y\in C}\inf_{x\in K}f(x,y)\in \mathbb{%
R}.
\end{equation*}
\end{proposition}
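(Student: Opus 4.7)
The plan is to establish the trivial direction $\sup_{y \in C}\inf_{x \in K} f(x,y) \le \inf_{x \in K}\sup_{y \in C} f(x,y)$ (which is immediate from pointwise comparison) and to obtain the reverse inequality by contradiction. Assume there exist reals $\alpha < \beta$ with
\[
\sup_{y \in C}\inf_{x \in K} f(x,y) < \alpha < \beta < \inf_{x \in K}\sup_{y \in C} f(x,y).
\]
Since $f(\cdot, y)$ is lower semicontinuous and quasi-convex, the superlevel set $U_y := \{x \in K : f(x,y) > \beta\}$ is open in $K$; by the right-hand strict inequality the family $\{U_y\}_{y \in C}$ covers $K$, so compactness yields a finite subcover indexed by $y_1,\dots,y_n \in C$. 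Consequently $\min_{x \in K}\max_{1 \le i \le n} f(x, y_i) \ge \beta$, and the task reduces to a purely finite-dimensional lemma.

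The core step is the \emph{two-point lemma}: given $y_1, y_2 \in C$ with $\max(f(x,y_1), f(x,y_2)) > \alpha$ for every $x \in K$, there exists $y \in \operatorname{conv}\{y_1, y_2\} \subseteq C$ such that $f(x, y) > \alpha$ for all $x$, which contradicts $\sup_{y}\inf_{x} f(x,y) < \alpha$. To prove it, write $y_\lambda := \lambda y_1 + (1-\lambda)y_2$ and set $A_\lambda := \{x \in K : f(x, y_\lambda) \le \alpha\}$. Each $A_\lambda$ is closed (by lower semicontinuity in $x$) and convex (by quasi-convexity in $x$), hence connected. By quasi-concavity in $y$, $x \in A_\lambda$ forces $\min(f(x,y_1), f(x,y_2)) \le \alpha$, so $A_\lambda \subseteq A_0 \cup A_1$. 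The hypothesis on $(y_1, y_2)$ forces $A_0 \cap A_1 = \emptyset$, so the connected set $A_\lambda$ is contained in exactly one of $A_0$ or $A_1$ whenever it is nonempty.

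The main obstacle is the topological argument that follows: partition $[0,1]$ into $\Lambda_i := \{\lambda : A_\lambda \subseteq A_i\}$ for $i = 0, 1$, together with $\Lambda_\emptyset := \{\lambda : A_\lambda = \emptyset\}$. If $\Lambda_\emptyset \ne \emptyset$ we are done. Otherwise one shows, using lower semicontinuity of $f(\cdot, y_\lambda)$ jointly with the upper semicontinuity of $f(x, \cdot)$ along the segment $\lambda \mapsto y_\lambda$, that both $\Lambda_0$ and $\Lambda_1$ are closed in $[0,1]$. Since $0 \in \Lambda_0$ and $1 \in \Lambda_1$ and the two are disjoint, connectedness of $[0,1]$ is violated. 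This contradiction forces some $A_{\lambda^\ast}$ to be empty, giving the desired $y = y_{\lambda^\ast}$. This semicontinuity-plus-connectedness step is the delicate point where all the hypotheses of the theorem are used simultaneously.

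The lemma extends to $n$ points by induction: apply the induction hypothesis to $y_2,\dots,y_n$ to produce a single $\tilde y \in \operatorname{conv}\{y_2,\dots,y_n\}$ with $f(\cdot, \tilde y) > \alpha'$ uniformly, for some $\alpha' \in (\sup_y\inf_x f,\alpha)$ that yields the needed strict margin, and then apply the two-point case to $y_1$ and $\tilde y$. The resulting $y \in \operatorname{conv}\{y_1,\dots,y_n\} \subseteq C$ satisfies $\inf_x f(x,y) > \sup_y \inf_x f(x,y)$, the desired contradiction. Finally, the $\inf$ on the left of the min-max equality is attained because $x \mapsto \sup_{y \in C} f(x,y)$ is lower semicontinuous (being a supremum of lower semicontinuous functions) on the compact set $K$, so the \emph{conservative value} statement upgrades to an actual $\min$, and the proposition follows.
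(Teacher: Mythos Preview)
The paper does not prove this classical result itself; it simply directs the reader to Komiya's short note \cite{Sion}. Your outline is precisely Komiya's argument---the easy inequality, the finite cover via compactness and lower semicontinuity, the two-point lemma based on connectedness of the sublevel sets $A_\lambda$, and the inductive reduction---so the approaches coincide.

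There is, however, one genuine gap in your sketch of the two-point lemma. With only the single threshold $\alpha$ used to define $A_\lambda$, the closedness of $\Lambda_0$ and $\Lambda_1$ does \emph{not} follow from separate lower/upper semicontinuity: if $\lambda_n\to\lambda$ and $x\in A_\lambda$, upper semicontinuity of $f(x,\cdot)$ only yields $f(x,y_{\lambda_n})<\alpha+\varepsilon$ eventually, not $f(x,y_{\lambda_n})\le\alpha$, so one cannot place $x$ in $A_{\lambda_n}$ and close the argument. Komiya's remedy is to pick, \emph{inside} the lemma, a second level $\beta'$ with $\alpha<\beta'<\min_x\max\{f(x,y_1),f(x,y_2)\}$ and to run the connectedness/partition argument for the larger sets $B_\lambda:=\{x:f(x,y_\lambda)\le\beta'\}$. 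Then $x\in A_\lambda$ gives $f(x,y_\lambda)\le\alpha<\beta'$, upper semicontinuity in $y$ forces $x\in B_{\lambda_n}$ for large $n$, and since $A_{\lambda_n}\subseteq A_0\subseteq B_0$ one gets $B_{\lambda_n}\subseteq B_0$ by connectedness, contradicting $x\in A_\lambda\subseteq A_1\subseteq B_1$. You already introduced two constants $\alpha<\beta$ at the outset; the repair is only a matter of threading the second one through the lemma rather than reserving it solely for the covering step.
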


\noindent For a simple proof of the above proposition, see \cite{Sion}.
Notice that Sion's min-max theorem does not imply the existence of a saddle
point for $f$, that is, a pair $(\bar{x},\bar{y})\in K\times C$ satisfying%
\begin{equation*}
f(\bar{x},\bar{y})=\sup_{y\in C}f(\bar{x},y)=\inf_{x\in K}f(x,\bar{y}%
)=\sup_{y\in C}\inf_{x\in K}f(x,y).
\end{equation*}%
By the celebrated \textquotedblleft von Neumann's min-max
theorem\textquotedblright , this situation occurs when $C$ is additionally
compact and the functions are not only quasi-convex or quasi-concave but
convex or concave:

\begin{proposition}[von Neumann's min-max]
Let $K$, $C$ and $f$ be as in Proposition \ref{prop Sion}. Assume
additionally that $C$ is compact, $f(\cdot ,y):K\rightarrow \mathbb{R}$ is
convex, and $f(x,\cdot ):C\rightarrow \mathbb{R}$ is concave. Then $f$ has a
saddle point $(\bar{x},\bar{y})\in K\times C$.
\end{proposition}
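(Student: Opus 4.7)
The plan is to reduce the saddle-point existence to Sion's min-max theorem (Proposition \ref{prop Sion}) combined with standard semicontinuity and compactness arguments. Sion already provides
$$\min_{x \in K} \sup_{y \in C} f(x,y) = \sup_{y \in C} \min_{x \in K} f(x,y),$$
so the additional ingredients needed are: (i) a minimizer $\bar{x} \in K$ of $x \mapsto \sup_{y \in C} f(x,y)$; (ii) a maximizer $\bar{y} \in C$ of $y \mapsto \min_{x \in K} f(x,y)$; and (iii) a short chain of inequalities showing that $(\bar{x},\bar{y})$ is indeed a saddle point.

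For (i), since $f(\cdot,y)$ is lower semicontinuous on $K$ for each $y \in C$, the function $\varphi(x) := \sup_{y \in C} f(x,y)$ is a supremum of a family of lower semicontinuous functions, hence itself lower semicontinuous on $K$. Because $K$ is compact, $\varphi$ attains its infimum at some $\bar{x} \in K$. For (ii), the extra hypothesis that $C$ is compact (the single new assumption compared to Sion) enters here: since $f(x,\cdot)$ is upper semicontinuous on $C$ for each $x \in K$, the function $\psi(y) := \min_{x \in K} f(x,y)$ is an infimum of upper semicontinuous functions, hence upper semicontinuous on $C$, so compactness of $C$ yields a maximizer $\bar{y} \in C$. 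Notice that the stronger convexity/concavity hypotheses of von Neumann's theorem (as opposed to only quasi-convexity/quasi-concavity in Sion) are not even needed in these two steps; they enter only indirectly through the invocation of Sion's theorem to produce the minimax equality.

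For (iii), we simply combine the definitions:
$$\min_{x \in K} f(x,\bar{y}) \;\leq\; f(\bar{x},\bar{y}) \;\leq\; \sup_{y \in C} f(\bar{x},y).$$
By the choice of $\bar{x}$, the right-hand side equals $\min_{x \in K} \sup_{y \in C} f(x,y)$; by the choice of $\bar{y}$, the left-hand side equals $\sup_{y \in C} \min_{x \in K} f(x,y)$; and by Sion's theorem these two are equal. Hence all four quantities coincide with $f(\bar{x},\bar{y})$, which forces $f(\bar{x},y) \leq f(\bar{x},\bar{y}) \leq f(x,\bar{y})$ for every $(x,y) \in K \times C$, i.e., the saddle-point property.

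The main (minor) obstacle is keeping the direction of semicontinuity correct at each step and recognizing that compactness of $C$ is exactly what is needed to upgrade the outer $\sup$ on the right-hand side of Sion's identity from a $\sup$ to a $\max$, which is indispensable for producing $\bar{y}$. Once this is in place, von Neumann's theorem is a purely formal consequence of Sion and the proof is essentially complete.
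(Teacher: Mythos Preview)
Your proof is correct and follows the standard route: Sion's identity plus lower/upper semicontinuity on the two compact sets to extract the optimizers $\bar{x}$ and $\bar{y}$, followed by the usual sandwich of inequalities. The paper itself does not prove this proposition; it is simply stated as the classical von Neumann min-max theorem and used as a black box in the subsequent analysis of the concave case, so there is nothing to compare against beyond noting that your argument supplies exactly what the paper assumes.

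One remark worth recording: as you yourself observe, your argument never uses the full convexity/concavity hypotheses---only the quasi-convexity/quasi-concavity and semicontinuity already present in Sion's theorem, together with the compactness of $C$. In other words, you have proved a slightly stronger statement than the one in the paper (saddle points exist under Sion's hypotheses plus compactness of $C$, without upgrading to genuine convexity/concavity). The paper's formulation with convex/concave is the traditional ``von Neumann'' packaging, but the extra strength is not actually used anywhere in the paper's applications, where the relevant functions are affine in one variable and convex in the other anyway.
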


According to Sion's min-max theorem and the above observations, we obtain
from (\ref{sdsdsd}) that%
\begin{eqnarray}
\hat{P}(F) &=&\sup_{x\in (m_{A},M_{A})}\left\{ \min_{s\in K}\left\{
-xs+G^{\ast }(s)\right\} -I_{A}(x)\right\}  \notag \\
&=&\min_{s\in K}\left\{ G^{\ast }(s)+\sup_{x\in (m_{A},M_{A})}\left\{
-xs-I_{A}(x)\right\} \right\}  \notag \\
&=&\min_{s\in K}\left\{ G^{\ast }(s)+P(-sA)-\log d\right\}  \notag \\
&=&\inf_{s\in \mathbb{R}}\left\{ G^{\ast }(-s)+P(sA)-\log d\right\} .
\label{Bogoliubov's variational principle}
\end{eqnarray}
This is nothing else but Bogoliubov's variational principle asserted in (\ref%
{erer}) for the concave nonlinear pressure.

Exactly as in the convex case (cf. (\ref{dfdfdfddfdf})), by representing $%
P(sA)$ via the variational principle for finite measures, from the last
equality, we arrive at the nonlinear pressure functional for invariant
probabilities:%
\begin{eqnarray}
\hat{P}(F) &=&\inf_{s\in \mathbb{R}}\left\{ G^{\ast }(-s)+P(sA)\right\}
=\min_{s\in K}\left\{ G^{\ast }(-s)+P(sA)-\log d\right\}  \notag \\
&=&\min_{s\in K}\left\{ G^{\ast }(-s)+\sup_{\rho \in \mathcal{M}}\left\{
\rho (sA)+h(\rho )\right\} -\log d\right\} .  \label{klit}
\end{eqnarray}%
By the definition of the entropy functional $h$ on (general) finite measures
(Remark \ref{hdf}), when looking for the supremum in \eqref{klit}, the
non-invariant measures can be disregarded. Moreover, for any fixed $\rho \in 
\mathcal{P}(T)$, the mapping 
\begin{equation*}
s\mapsto G^{\ast }(-s)+\rho (sA)+h(\rho )
\end{equation*}%
from the compact convex set $K$ to $\mathbb{R}$ is convex and lower
semicontinuous, while for any fixed $s\in K$, the mapping 
\begin{equation*}
\rho \mapsto G^{\ast }(-s)+\rho (sA)+h(\rho )
\end{equation*}%
from the weak$^{\ast }$ compact and convex set $\mathcal{P}(T)$ to $\mathbb{R%
}$ is concave and weak$^{\ast }$ upper semicontinuous (see Theorems 6.10,
8.1 and 8.2 in \cite{Walters}). Therefore, by Sion's min-max theorem, we
deduce from (\ref{klit}) that%
\begin{eqnarray}
\hat{P}(F) &=&\sup_{\rho \in \mathcal{M}}\left\{ h(\rho )-\log d+\min_{s\in
K}\left\{ s\rho (A)+G^{\ast }(-s)\right\} \right\}  \label{sdsdsdsd} \\
&=&\sup_{\rho \in \mathcal{M}}\left\{ h(\rho )-\log d-\sup_{s\in \mathbb{R}%
}\left\{ -s\rho (A)-G^{\ast }(-s)\right\} \right\}  \notag \\
&=&\sup_{\rho \in \mathcal{M}}\left\{ h(\rho )-\log d-\sup_{s\in \mathbb{R}%
}\left\{ s\rho (A)-G^{\ast }(s)\right\} \right\}  \notag \\
&=&\sup_{\rho \in \mathcal{M}}\left\{ h(\rho )-\log d-G(\rho (A))\right\} 
\notag \\
&=&\sup_{\rho \in \mathcal{M}}\left\{ F(\rho (A))+h(\rho )-\log d\right\} .
\label{eq pnl var}
\end{eqnarray}%
The functional $\mathfrak{p}:\mathcal{M}\rightarrow \mathbb{R}\cup \{\infty
\}$, defined again (cf. (\ref{eq press func convex})) by 
\begin{equation}
\mathfrak{p}(\rho ):=F(\rho (A))+h(\rho )-\log d\text{ },\text{\qquad }\rho
\in \mathcal{M},  \label{eq press func conc}
\end{equation}%
is the nonlinear pressure functional associated with the (now concave)
function $F$. As before, we call the maximizers of $\mathfrak{p}$ the
nonlinear equilibrium probabilities.

We show now that, as in the convex case, they are self-consistent linear
equilibrium probabilities. To this end, we show that nonlinear equilibrium
probabilities are directly related to saddle points of the functional%
\begin{equation*}
(\rho ,s)\mapsto h(\rho )-\log d+s\rho (A)+G^{\ast }(-s)
\end{equation*}%
on $\mathcal{M}\times \mathbb{R}$, which all belongs to $\mathcal{P}%
(T)\times \mathbb{R}$ (otherwise the functional takes infinite values):
Assuming again (\ref{rtytry}), not only Sion's min-max theorem but also von
Neumann's min-max theorem can be used to obtain (\ref{sdsdsdsd}), because $%
\mathcal{P}(T)$ is is not only convex but also weak$^{\ast }$ compact. In
particular, there is a saddle point $(\omega ,\bar{s})\in \mathcal{P}%
(T)\times \mathbb{R}$, that is,%
\begin{eqnarray*}
\hat{P}(F) &=&\sup_{\rho \in \mathcal{M}}\left\{ h(\rho )-\log d+F(\rho
(A))\right\} \\
&=&\sup_{\rho \in \mathcal{M}}\left\{ h(\rho )-\log d+\min_{s\in K}\left\{
s\rho (A)+G^{\ast }(-s)\right\} \right\} \\
&=&h(\omega )-\log d+\min_{s\in K}\left\{ s\omega (A)+G^{\ast }(-s)\right\}
\\
&=&G^{\ast }(-\bar{s})+\max_{\rho \in \mathcal{M}}\left\{ \rho (\bar{s}%
A)+h(\rho )-\log d\right\} \\
&=&h(\omega )-\log d+\bar{s}\omega (A)+G^{\ast }(-\bar{s}).
\end{eqnarray*}%
In particular, 
\begin{equation*}
\hat{P}(F)=h(\omega )-\log d+\min_{s\in K}\left\{ s\omega (A)+G^{\ast
}(-s)\right\} =h(\omega )-\log d+F(\omega (A))
\end{equation*}%
and $\omega \in \mathcal{P}(T)$ is thus a nonlinear equilibrium probability.
By Bogoliubov's variational principle and the above equalities, we find that%
\begin{eqnarray*}
\hat{P}(F) &=&G^{\ast }(-\bar{s})+\max_{\rho \in \mathcal{M}}\left\{ \rho (%
\bar{s}A)+h(\rho )\right\} -\log d \\
&=&G^{\ast }(-\bar{s})+P(\bar{s}A)-\log d\text{ },
\end{eqnarray*}%
that is, $\bar{s}\in K$ is a solution to Bogoliubov's variational problem (%
\ref{Bogoliubov's variational principle}). The equality%
\begin{equation*}
G^{\ast }(-\bar{s})+\max_{\rho \in \mathcal{M}}\left\{ \rho (\bar{s}%
A)+h(\rho )-\log d\right\} =h(\omega )-\log d+\bar{s}\omega (A)+G^{\ast }(-%
\bar{s})
\end{equation*}%
yields%
\begin{equation*}
h(\omega )-\log d+\omega (\bar{s}A)=\max_{\rho \in \mathcal{M}}\left\{ \rho (%
\bar{s}A)+h(\rho )-\log d\right\} ,
\end{equation*}%
that is, $\omega \in \mathcal{P}(T)$ is the linear equilibrium probability
for the potential $\bar{s}A$. In particular, if $G^{\ast }$ is assumed to be
differentiable (as above for $F^{\ast }$, in the convex case), it follows
that 
\begin{equation}
\varpi (A)=\left. \frac{d}{ds}P(sA)\right\vert _{s=\bar{s}}=-(G^{\ast
})^{\prime }(-\bar{s})\text{ }.  \label{sisi1}
\end{equation}

Assume again that $(G^{\ast })^{\prime }$ is injective, that is, $G^{\ast }$
is strictly convex, and denote by $\chi $ the inverse of $(G^{\ast
})^{\prime }$ on its image. Then,%
\begin{equation}
\left. \frac{d}{ds}P(sA)\right\vert _{s=-\chi (-\varpi (A))}=\varpi (A),
\label{usisi1}
\end{equation}%
which, similar to the convex case, is a self-consistency condition saying
that $\varpi $ is the equilibrium probability associated with the potential $%
-\chi (-\varpi (A))A$. In fact, if $G^{\ast }$ is strictly convex then the
solution $\bar{s}\in K$ to Bogoliubov's variational problem is unique. As
equilibrium probabilities for H\"{o}lder potentials are also unique, it
follows that the nonlinear equilibrium probability $\varpi \in \mathcal{M}$
is unique and $(\varpi ,\bar{s})$ is the unique saddle point of the mapping 
\begin{equation*}
(\rho ,s)\mapsto h(\rho )-\log d+s\rho (A)+G^{\ast }(-s)
\end{equation*}%
from $\mathcal{M}\times \mathbb{R}$ to $\mathbb{R}\cup \{\infty \}$.

This contrasts with the convex case, where the solution to Bogoliubov's
variational problem and the nonlinear equilibrium probability are generally
not unique. In other words, in the (strictly) concave case, there is never a
nonlinear phase transition in contrast to the convex case (see Section \ref%
{mae}).

\subsection{The quadratic pressure}

\label{qua}

In this subsection, we consider the particular case of the convex function $%
F(x)=\beta x^{2}/2$ for a fixed parameter $\beta >0$. In statistical
mechanics, $\beta $ is related to the inverse of temperature. The
expressions obtained here will eventually be used in the next sections to
give explicit examples of nonlinear equilibrium probabilities and nonlinear
phase transitions. The corresponding self-consistency condition is given
below in Equation \eqref{outr71}, which is a particular case of \eqref{ciir}
(see also \eqref{geger}). We will simply give the final expressions without
further details, as they can easily be obtained from the previous
subsections.

In other words, here we are interested in the following problem: Given an H%
\"{o}lder potential $A:\Omega \rightarrow \mathbb{R}$ determine the set of
invariant probabilities $\rho \in \mathcal{P}(T)$ maximizing the nonlinear
pressure functional $\mathfrak{p}$ discussed above (see (\ref{eq press func
convex}) and (\ref{eq press func conc})), in the quadratic case. It
corresponds to the study of the variational problem 
\begin{equation}
\mathfrak{P}_{2,\beta }(A):=\sup_{\rho \in \mathcal{P}(T)}\left\{ \frac{%
\beta }{2}\rho \left( A\right) ^{2}+h(\rho )-\log d\right\} ,  \label{impai}
\end{equation}%
where $h(\rho )$ is the (Kolmogorov-Sinai) entropy of $\rho $. Up to the
explicit constant $-\log d$, $\mathfrak{P}_{2,\beta }(A)$ is nothing else
but $\mathfrak{P}_{F,A}$ for $F(x)=\beta x^{2}/2$ with $\beta >0$. See also
Equations (\ref{Kbomm})--(\ref{Kibom}) and (\ref{klir}).

We point out that for the examples of Section \ref{mae} we will be able to
explicitly give the probabilities maximizing \eqref{impai}. In fact, it
turns out that, in some cases, they are independent and identically
distributed (i.i.d.) probabilities.

Like in Equation (\ref{feij502bis0}), for any H\"{o}lder potential $A:\Omega
\rightarrow \mathbb{R}$ and parameters $\beta >0$, $t\in \mathbb{R}$, we set%
\begin{equation}
\hat{c}_{\beta }(t):=P(\beta tA)-\log d=\lim_{n\rightarrow \infty }\hat{c}%
_{n}(\beta t),  \label{feij509}
\end{equation}%
where, for each $n\in \mathbb{N}$, $\hat{c}_{n}$ is defined by (\ref{feij50}%
), that is, 
\begin{equation*}
\hat{c}_{n}(t):={\frac{1}{n}}\log \int
e^{t\,(A(x)+A(T(x))+A(T^{2}(x))+\cdots +A(T^{n-1}(x))}\mu (\mathrm{d}x).
\end{equation*}%
If $F(x)=\beta x^{2}/2$ then $F^{\ast }(s)=s^{2}/(2\beta )$ and thus $%
(F^{\ast })^{\prime }(s)=\beta ^{-1}s$. It then follows from (\ref{ciir})
that, for any fixed $\beta >0$, the corresponding self-consistency equation
is%
\begin{equation}
\hat{c}_{\beta }^{\prime }(t)=\beta t=\beta \int A(x)\mu _{\beta tA}\left( 
\mathrm{d}x\right) ,  \label{outr71}
\end{equation}%
where $\mu _{\beta tA}$ is the unique linear equilibrium probability for the
H\"{o}lder potential $\beta tA$. This equation determines the possible
values $t$ for which the linear equilibrium probability for the (effective)
potential $\beta tA$ maximizes the quadratic pressure \eqref{impai}. In
other words, a linear equilibrium probability for $\beta tA$ with $t$
satisfying (\ref{outr71}) may be a nonlinear equilibrium probability for $A$
and $F(x)=\beta x^{2}/2$ ($\beta >0$), see Definition \ref{nonlinear
equilibrium probabilities}.

Taking $\hat{c}=\hat{c}_{\beta }$ in \eqref{feij68}, we define the large
deviation rate function $I_{\beta }$ (relative to the maximal entropy
measure $\mu $ and potential $\beta A$) by%
\begin{equation*}
I_{\beta }(x):=\sup_{t\in \mathbb{R}}\{tx-\hat{c}_{\beta }(t)\}=\sup_{t\in 
\mathbb{R}}\{tx-P(\beta tA)+\log d\},\qquad x\in \mathbb{R}.
\end{equation*}%
Again, if $F(x)=\beta x^{2}/2$ ($\beta >0$), then we deduce from Equations (%
\ref{binc78}), (\ref{fgh}) and (\ref{klir}) that 
\begin{eqnarray}
\mathfrak{P}_{2,\beta }(A) &=&\lim_{n\rightarrow \infty }\frac{1}{n}\ln
\left( \int e^{\frac{n}{2}\beta x^{2}}\mu _{n}^{A}(\mathrm{d}x)\right)
\label{foiq98} \\
&=&\lim_{n\rightarrow \infty }\frac{1}{n}\ln \left( \int e^{\frac{n}{2\beta }%
x^{2}}\mu _{n}^{\beta A}(\mathrm{d}x)\right) =\sup_{x\in \mathbb{R}}\left\{ 
\frac{x^{2}}{2\beta }-I_{\beta }(x)\right\} .  \notag
\end{eqnarray}%
Having in mind Equality (\ref{utq}) with a rescaling $s=\beta t$, we define
for this nonlinear pressure its (Bogoliubov) approximating pressure by 
\begin{equation}
P_{\beta ,A}(t):=-\,\frac{\beta }{2}t^{2}+P(\beta tA)-\log d=-\frac{\beta }{2%
}t^{2}+\hat{c}_{\beta }(t),\qquad t\in \mathbb{R}.  \label{quew}
\end{equation}%
Notice that the above expression corresponds to $\varphi _{OS}$ in \cite%
{LeWa1}\footnote{%
See, e.g., Theorem 1.3 (3) of \cite{LeWa1}.}. Observing that $\beta t^{2}/2$
is the Legendre transform of $x^{2}/2\beta $, Bogoliubov's variational
principle (as stated in Equation (\ref{utq})) yields the following theorem:

\begin{theorem}
\label{lgre}For any H\"{o}lder potential $A:\Omega \rightarrow \mathbb{R}$
and each parameter $\beta >0$, 
\begin{equation}
\mathfrak{P}_{2,\beta }(A)=\sup_{t\in \mathbb{R}}P_{\beta ,A}(t).
\label{naoac}
\end{equation}
\end{theorem}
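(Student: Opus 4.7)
The plan is to obtain this as an immediate specialization of Bogoliubov's variational principle for convex $F$ established in Section \ref{The convex case}, namely identity (\ref{utq}), applied to the continuous convex function $F(x)=\beta x^{2}/2$ with $\beta>0$.

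First I would compute the Legendre-Fenchel transform: for $F(x)=\beta x^{2}/2$ one has
\begin{equation*}
F^{\ast}(s)=\sup_{x\in\mathbb{R}}\left\{sx-\frac{\beta x^{2}}{2}\right\}=\frac{s^{2}}{2\beta},
\end{equation*}
since the supremum is attained at $x=s/\beta$. This $F$ is continuous and convex, so the hypotheses of the convex case of Section \ref{The convex case} apply, and in particular $F=F^{\ast\ast}$, which is what made the derivation of (\ref{utq}) and (\ref{klir}) go through.

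Next I would identify $\hat{P}(F)$ with $\mathfrak{P}_{2,\beta}(A)$. Comparing the definition (\ref{impai}) of $\mathfrak{P}_{2,\beta}(A)$ with (\ref{Kbombis}), one sees that $\mathfrak{P}_{2,\beta}(A)=\mathfrak{P}_{F,A}-\log d$, and the general relation $\hat{P}(F)=\mathfrak{P}_{F,A}-\log d$ established via (\ref{klir}) then yields $\hat{P}(F)=\mathfrak{P}_{2,\beta}(A)$. Plugging $F^{\ast}(s)=s^{2}/(2\beta)$ into (\ref{utq}) gives
\begin{equation*}
\mathfrak{P}_{2,\beta}(A)=\sup_{s\in\mathbb{R}}\left\{-\frac{s^{2}}{2\beta}+P(sA)-\log d\right\}.
\end{equation*}

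Finally, since $\beta>0$, the change of variables $s=\beta t$ defines a bijection of $\mathbb{R}$ preserving suprema, and substituting yields
\begin{equation*}
\mathfrak{P}_{2,\beta}(A)=\sup_{t\in\mathbb{R}}\left\{-\frac{\beta t^{2}}{2}+P(\beta tA)-\log d\right\}=\sup_{t\in\mathbb{R}}P_{\beta,A}(t),
\end{equation*}
by the definition (\ref{quew}) of the Bogoliubov approximating pressure. This gives (\ref{naoac}).

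There is no real obstacle here: the statement is essentially a rescaled restatement of the convex Bogoliubov principle already proven in Section \ref{The convex case}, and the only things that need to be checked are the trivial Legendre transform computation and the $s=\beta t$ substitution. One could alternatively give a self-contained derivation starting from (\ref{foiq98}) and applying the Varadhan lemma together with the commutation of suprema argument of Section \ref{The convex case}, but this would merely repeat the generic convex argument. I would therefore opt for the short proof above.
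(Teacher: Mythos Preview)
Your proof is correct and follows essentially the same approach as the paper: the paper simply remarks that $\beta t^{2}/2$ is the Legendre transform of $x^{2}/(2\beta)$ and invokes Bogoliubov's variational principle (\ref{utq}), which is exactly what you do, with the Legendre computation and the $s=\beta t$ rescaling spelled out explicitly.
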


Observe that the critical points of Bogoliubov's approximating pressure (\ref%
{quew}) are nothing else but the solutions to the self-consistency equation (%
\ref{outr}), also stated just above with Equation (\ref{outr71}). The
critical points $t_{0}$ of the functions $t\mapsto P_{\beta ,A}(t)$ may be
local maxima or minima, depending on the sign of $-\beta +\hat{c}_{\beta
}^{\prime \prime }(t_{0})$. From the results of Sections \ref{The convex
case} and \ref{qua}, the global minima $t_{0}$ are in one-to-one
correspondence to the \emph{nonlinear} equilibrium probabilities, which are
proven to be (self-consistent) \emph{linear} equilibrium probabilities for
the potentials $\beta t_{0}A$, satisfying (\ref{outr71}).

\begin{remark}
\label{jkd}A priori, the solution $t_{0}$ to (\ref{outr71}) does not have to
be unique. As the function $t\mapsto c^{\prime }(t)$ is real analytic, the
number of solutions $t_{0}$ is finite in finite intervals. In Figure \ref%
{figt12} (for $u>1$ and $\beta =1$), we give an example with the existence
of two points $t_{0}\neq 0$ and $-t_{0}$ that satisfy the self-consistency
condition (\ref{outr71}). This example refers to the case $d=2$, i.e., $%
\Omega =\{-1,1\}^{\mathbb{N}}$.
\end{remark}

Regarding expression \eqref{quew}, we will need a lemma (the analogous of
Lemma 3.1 in \cite{LeWa2}) to be used later in Section \ref{conj}.

\begin{lemma}
\label{qte} For every $\beta >0$, there exist two constants $R,B>0$ such
that, for all $t>B$,%
\begin{equation*}
P_{\beta ,A}(t)=-\frac{\beta }{2}t^{2}+\hat{c}_{\beta }(t)<R-\frac{\beta }{4}%
t^{2},\qquad t\in \mathbb{R},
\end{equation*}%
and $P_{\beta ,A}$ is maximized for critical points inside the interval $%
[-B,B]$.
\end{lemma}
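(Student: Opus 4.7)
The core idea is a crude linear-in-$|t|$ bound on $P(\beta tA)$ which is then defeated by the quadratic term $-\tfrac{\beta}{2}t^{2}$. The linear bound is immediate from the variational definition (\ref{iore}) of the pressure: since $|\rho(A)|\le\|A\|_{\infty}$ and $h(\rho)\le\log d$ for every $\rho\in\mathcal{P}(T)$,
\begin{equation*}
P(\beta tA)=\sup_{\rho\in\mathcal{P}(T)}\{\beta t\,\rho(A)+h(\rho)\}\le\beta|t|\,\|A\|_{\infty}+\log d.
\end{equation*}
Subtracting $\log d$ yields $\hat{c}_{\beta}(t)\le\beta|t|\,\|A\|_{\infty}$.

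Next I would carry out an elementary completion of the square. Write
\begin{equation*}
P_{\beta,A}(t)=-\tfrac{\beta}{2}t^{2}+\hat{c}_{\beta}(t)\le-\tfrac{\beta}{4}t^{2}+\bigl(-\tfrac{\beta}{4}t^{2}+\beta|t|\,\|A\|_{\infty}\bigr).
\end{equation*}
The parenthesized quadratic in $|t|$ attains its maximum $\beta\|A\|_{\infty}^{2}$ at $|t|=2\|A\|_{\infty}$, hence
\begin{equation*}
P_{\beta,A}(t)\le-\tfrac{\beta}{4}t^{2}+\beta\|A\|_{\infty}^{2}\qquad\text{for all }t\in\mathbb{R}.
\end{equation*}
Choosing $R>\beta\|A\|_{\infty}^{2}$ (for instance $R=\beta\|A\|_{\infty}^{2}+1$) turns this into the strict inequality $P_{\beta,A}(t)<R-\tfrac{\beta}{4}t^{2}$ valid for every $t\in\mathbb{R}$, in particular for $t>B$ with any $B>0$.

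For the second conclusion, I would use the base value $P_{\beta,A}(0)=P(0)-\log d=0$ as a benchmark. Pick $B$ large enough that $R-\tfrac{\beta}{4}B^{2}<0$, for example $B>2\sqrt{R/\beta}$; then for $|t|>B$ one has $P_{\beta,A}(t)<0=P_{\beta,A}(0)$, so no global maximizer of $P_{\beta,A}$ can lie outside $[-B,B]$. Since the map $t\mapsto P(\beta tA)$ is real-analytic (by Proposition 4.7 in \cite{PP}), so is $P_{\beta,A}$, and the existence of a global maximum on $\mathbb{R}$ (guaranteed by the coercivity bound above) forces it to be attained at an interior critical point of $[-B,B]$, yielding the self-consistency Equation (\ref{outr71}).

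The only mildly delicate point is ensuring strict inequality in the stated bound (for which it suffices to enlarge $R$ by a fixed positive constant) and handling the case $t<-B$: the same $|t|$-based argument gives the analogous bound there, so taking the same $B$ works on both sides. None of the steps requires more than the crude bounds $|\rho(A)|\le\|A\|_{\infty}$, $h(\rho)\le\log d$ and analyticity of $P(tA)$ in $t$; there is no genuine obstacle beyond the bookkeeping of constants.
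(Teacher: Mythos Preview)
Your proof is correct and follows essentially the same approach as the paper: both use that $P(\beta tA)$ grows at most linearly in $|t|$ (the paper cites the bound $|t^{-1}P(t\beta A)|\le\beta\|A\|$ from \cite{BLL}, you derive it directly from the variational principle and the bounds $|\rho(A)|\le\|A\|_\infty$, $h(\rho)\le\log d$), so the $-\tfrac{\beta}{2}t^2$ term eventually dominates, and both use $P_{\beta,A}(0)=0$ as the benchmark forcing maximizers into a bounded interval. Your version is simply more explicit about the constants via the completion-of-the-square step, whereas the paper's proof leaves these routine computations to the reader.
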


\begin{proof}
Note that $P_{\beta ,A}(0)=0$ and $P(\beta tA)$ does not grow faster than
linearly in $t$, since $-\beta \Vert A\Vert \leq t^{-1}P(t\,\beta A)\leq
\beta \Vert A\Vert $ (see \cite{BLL}, or combine Theorem \ref{rer1} with
Proposition 124 of Section 6.1 in \cite{LTF}). By (\ref{quew}), the
assertion follows.
\end{proof}

The set of all solutions $t_{0}$ to the self-consistency equation (\ref{outr}%
) or (\ref{outr71}) is denoted by $S_{0}$. In the examples of Section \ref%
{mae}, $S_{0}$ has one or three points, depending on the parameter $\beta $
and the potential $A$. In fact, by symmetry, $t_{0}=0$ is always a solution
to \eqref{outr} or (\ref{outr71}), but, in general, it is not a global
minimum of Bogoliubov's approximating pressure (\ref{quew}) and, thus, does
not yield a nonlinear equilibrium measure.

If we consider the unique linear equilibrium probability $\mu _{f}$ for a
fixed H\"{o}lder potential $f$ instead of the maximum entropy probability $%
\mu $ to define the measures $\mu _{n}$ in (\ref{foiq98}) and, consequently,
a more general nonlinear pressure $\mathfrak{P}_{2,\beta }(A)$, then the
corresponding Bogoliubov approximating pressure is 
\begin{equation}
P_{\beta ,A}(t):=-\frac{\beta }{2}t^{2}+P(f+t\beta A),\qquad t\in \mathbb{R}.
\label{joq2}
\end{equation}%
See, e.g., (\ref{joq1}). In particular, in this case, the self-consistency
equation, which refers to the critical points of this new approximating
pressure, is%
\begin{equation}
\left. \frac{dP(f+\beta tA)}{dt}\right\vert _{t=t_{0}}=\beta t_{0}=\beta
\int A(x)\mu _{f+\beta t_{0}A}\left( \mathrm{d}x\right) .  \label{pirt}
\end{equation}%
Moreover, one can also show that, in this more general case, the nonlinear
pressure satisfies a variational principle for invariant probabilities: 
\begin{equation}
\mathfrak{P}_{2,\beta }(A)=\sup_{\rho \in \mathcal{P}(T)}\left\{ \,\frac{%
\beta }{2}\rho (A)^{2}+h(\rho )+\rho (f)-P(f)\right\} .  \label{kjew}
\end{equation}%
See (\ref{bel1}). Note that when $f=-\log d$, i.e., $\mu _{f}=\mu $ is the
maximum entropy probability, one has $P(-\log d)=0$ and the previous special
case is recovered, that is, $\mathfrak{P}_{2,\beta }(A)$ corresponds to
Equation (\ref{impai}), as expected.

Similarly to the convex case discussed above, it is possible to obtain a
version of Theorem \ref{lgre} for the concave case, i.e., for $F(x)=-\beta
x^{2}/2$ with $\beta >0$. In this case, Bogoliubov's approximating pressure
is 
\begin{equation*}
P_{\beta ,A}(t):=\frac{\beta }{2}t^{2}+P(t\beta A)-\log d=\frac{\beta }{2}%
t^{2}+\hat{c}_{\beta }(t),\qquad t\in \mathbb{R},
\end{equation*}%
(that is, as compared to (\ref{quew}), the sign of the quadratic term
changes) and one has 
\begin{equation*}
\mathfrak{P}_{2,\beta }(\,A)=\inf_{t\in \mathbb{R}}P_{\beta ,A}(t).
\end{equation*}%
(That is, the $\sup $ of (\ref{naoac}) has to be replaced with an $\inf $.)
We omit the details, as they have already been explained for general concave
functions in Section \ref{conc}. In fact, recall that, as discussed above,
in the strictly concave case, the self-consistency equation has only one
solution, which implies that no nonlinear phase transition occurs. This case
is therefore less interesting than the convex one.

\subsection{The mean-field free energy functional and Bogoliubov's
approximation}

\label{Bog}

We proved above that if $F$ is a convex or a concave function, then, for a
fixed H\"{o}lder potential $A$, the associated nonlinear pressure satisfies
the following identity:%
\begin{equation}
\mathfrak{P}_{F,A}-\log d=\hat{P}(F)=\max_{\rho \in \mathcal{P}(T)}\left\{
F(\rho (A))+h(\rho )-\log d\right\} .  \label{hjk}
\end{equation}%
Observe that the solutions to this variational problem are precisely the
nonlinear equilibrium probabilities for $F$ and $A$ of Definition \ref%
{nonlinear equilibrium probabilities}. Remark that the above functional is 
\textbf{not} affine with respect to $\rho $. The (Kolmogorov-Sinai) entropy $%
h(\rho )$ is affine but the energetic part $F(\rho (A))$ generally not.) In
this subsection, we show that the original nonlinear pressure functional can
be replaced with an affine one.

In fact, later on in Section \ref{QLVE}, we show that the new maximizers are
not necessarily nonlinear equilibrium probabilities in the previous sense,
but are always in the closed convex hull of the set of these probabilities.
We then use in Section \ref{QLVE} this property as a step to prove that
mean-field equilibrium probabilities also have these properties, i.e., they
are always in the closed convex hull of the set of all nonlinear equilibrium
probabilities.

For technical simplicity, we again consider the quadratic case, that is, $%
F(x)=\pm \beta x^{2}/2$, with $\beta >0$ being fixed once and for all.
Moreover, this is the case considered here for explicit examples. In fact,
the result can be extended to the general convex and concave cases by using
arguments based on the Legendre-Fenchel transform, similar to what is done
in previous subsections. We refrain from working out all details of such a
more general setting and focus on the main arguments of the proof, which are
more clearly understood in the quadratic case. In fact, we will devote an
entire article \cite{BPL} to explaining the results in a very general
framework.

For any fixed continuous (more generally, bounded Borel-measurable)
potential $A:\Omega \rightarrow \mathbb{R}$ define the affine functional $%
\Delta _{A}:\mathcal{P}(T)\rightarrow \mathbb{R}$ by%
\begin{equation}
\Delta _{A}(\rho ):=\lim_{n\rightarrow \infty }\frac{1}{2}\int A_{n}\left(
x\right) ^{2}\rho \left( \mathrm{d}x\right) ,  \label{Delta0}
\end{equation}%
where $A_{n}$, $n\in \mathbb{N}$, are the Birkhoff averages defined by
Equation (\ref{Eq Birk Av}). This functional is Borel-measurable with
respect to the weak$^{\ast }$ topology, being the pointwise limit of a
sequence of continuous functionals. In fact, one can show that 
\begin{equation}
\Delta _{A}(\rho )=\inf_{n\in \mathbb{N}}\frac{1}{2}\int A_{n}\left(
x\right) ^{2}\rho \left( \mathrm{d}x\right)  \label{Delta1}
\end{equation}%
and $\Delta _{A}$ is thus even weak$^{\ast }$ upper semicontinous. Note
additionally that 
\begin{equation}
\Delta _{A}(\rho )\geq \frac{\rho (A)^{2}}{2}  \label{eq delta 1}
\end{equation}%
for all $\rho \in \mathcal{P}(T)$, with equality when the invariant measure $%
\rho $ is ergodic, i.e., extremal in the weak$^{\ast }$ compact convex space 
$\mathcal{P}(T)$ of $T$-invariant probabilities. For more details, see \cite%
{BPL}.

For fixed $\beta >0$, define the mean-field free energy functional $%
\mathfrak{f}:\mathcal{P}(T)\rightarrow \mathbb{R}$ by 
\begin{equation}
\mathfrak{f}^{\pm }(\rho ):=-\left( \pm \beta \Delta _{A}(\rho )+h(\rho
)-\log d\right) ,\text{\qquad }\rho \in \mathcal{P}(T),  \label{faro}
\end{equation}%
where we recall once again that $h$ is the (Kolmogorov-Sinai) entropy.

We define the \textquotedblleft nonlinear free energy
functional\textquotedblright\ $\mathfrak{g}^{\pm }:\mathcal{P}(T)\rightarrow 
\mathbb{R}$ by 
\begin{equation}
\mathfrak{g}^{\pm }(\rho ):=-\left( \pm \frac{\beta }{2}\rho (A)^{2}+h(\rho
)-\log d\right) ,\text{\qquad }\rho \in \mathcal{P}(T).  \label{farobis}
\end{equation}%
Note that $\mathfrak{g}^{\pm }$\ is nothing else but minus the nonlinear
pressure functional $\mathfrak{p}$ discussed above (see Equations (\ref{eq
press func convex}) and (\ref{eq press func conc})), in the quadratic case.
That is why it is named (nonlinear) free energy functional.

Now we derive the affine variational principle for the nonlinear pressure,
which is based on the functional $\Delta _{A}:\mathcal{P}(T)\rightarrow 
\mathbb{R}$ defined above:

\begin{theorem}
\label{erte} Let $F(x)=\pm \beta x^{2}/2$ with $\beta >0$. Then, 
\begin{equation}
-\inf \mathfrak{f}^{\pm }(\mathcal{P}(T))=-\inf \mathfrak{g}^{\pm }(\mathcal{%
P}(T))=\hat{P}(F).  \label{bon}
\end{equation}
\end{theorem}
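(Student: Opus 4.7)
The second equality $-\inf \mathfrak{g}^{\pm}(\mathcal{P}(T)) = \hat{P}(F)$ is essentially tautological: by the very definition of $\mathfrak{g}^{\pm}$ one has
\[
-\mathfrak{g}^{\pm}(\rho) \;=\; \pm \frac{\beta}{2}\rho(A)^2 + h(\rho) - \log d,
\]
so that this identity is precisely the variational representation of $\hat{P}(F)$ already established in Sections \ref{The convex case} and \ref{conc} via Bogoliubov's variational principle (compare \eqref{klir} and \eqref{eq pnl var}, together with Remark \ref{hdf}). The substance of the theorem is therefore the first equality, $-\inf \mathfrak{f}^{\pm}(\mathcal{P}(T)) = \hat{P}(F)$; what must be justified is the passage from $\rho(A)^2/2$ to $\Delta_A(\rho)$ inside the supremum over $\mathcal{P}(T)$.

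The plan rests on two structural facts recalled earlier: both $\Delta_A$ (by \eqref{Delta0}--\eqref{Delta1}) and $h$ are affine on the convex compact space $\mathcal{P}(T)$, while \eqref{eq delta 1} asserts that $\Delta_A(\rho) \geq \rho(A)^2/2$ with equality at every ergodic $\rho$. Combining these two facts with Choquet's ergodic decomposition $\rho = \int \rho'\, d\lambda_\rho(\rho')$ yields the key identity
\[
-\mathfrak{f}^{\pm}(\rho) \;=\; \int \bigl(\pm \beta \Delta_A(\rho') + h(\rho') - \log d\bigr)\, d\lambda_\rho(\rho') \;=\; \int \bigl(-\mathfrak{g}^{\pm}(\rho')\bigr)\, d\lambda_\rho(\rho').
\]
Moreover the pointwise bound in \eqref{eq delta 1} also gives $-\mathfrak{f}^{+} \geq -\mathfrak{g}^{+}$ in the convex case and $-\mathfrak{f}^{-} \leq -\mathfrak{g}^{-}$ in the concave case, on all of $\mathcal{P}(T)$.

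In the convex case $F(x) = \beta x^2/2$, the pointwise bound directly produces $\sup(-\mathfrak{f}^{+}) \geq \sup(-\mathfrak{g}^{+}) = \hat{P}(F)$, and the ergodic-decomposition identity supplies the matching upper bound
\[
-\mathfrak{f}^{+}(\rho) \;=\; \int \bigl(-\mathfrak{g}^{+}(\rho')\bigr)\, d\lambda_\rho(\rho') \;\leq\; \sup(-\mathfrak{g}^{+}) \;=\; \hat{P}(F),
\]
valid for every $\rho \in \mathcal{P}(T)$. Taking the supremum in $\rho$ closes the convex case.

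The main obstacle is the concave case $F(x) = -\beta x^2/2$, where the pointwise inequality runs the wrong way (giving only $\sup(-\mathfrak{f}^{-}) \leq \hat{P}(F)$) and where integrating $-\mathfrak{g}^{-}(\rho')$ against $\lambda_\rho$ no longer produces an upper bound on $\sup(-\mathfrak{g}^{-})$. I would resolve it by invoking the uniqueness result already proved in Section \ref{conc}: in the strictly concave quadratic case, the nonlinear equilibrium $\varpi$ is the \emph{linear} equilibrium probability for the Hölder potential $-\chi(-\varpi(A))A$, hence is unique and, by the Ruelle-Perron-Frobenius theorem, mixing, in particular ergodic. At this $\varpi$, \eqref{eq delta 1} holds with equality, so $-\mathfrak{f}^{-}(\varpi) = -\mathfrak{g}^{-}(\varpi) = \hat{P}(F)$, delivering the missing lower bound $\sup(-\mathfrak{f}^{-}) \geq \hat{P}(F)$ and completing the argument.
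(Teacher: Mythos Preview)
Your argument is correct. In the convex case your route is close to the paper's: both exploit that $\mathfrak{f}^{+}$ is affine and coincides with $\mathfrak{g}^{+}$ on ergodic measures; the paper argues via the Bauer maximum principle (an affine weak$^{\ast}$ lower semicontinuous functional on the compact convex set $\mathcal{P}(T)$ has a face of minimizers, hence an ergodic minimizer), whereas you integrate the identity $\mathfrak{f}^{+}=\mathfrak{g}^{+}$ on ergodics against the Choquet ergodic decomposition.

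The concave case is where you genuinely diverge. The paper obtains $\inf \mathfrak{g}^{-}\geq \inf \mathfrak{f}^{-}$ by invoking \emph{entropy-density} of ergodic measures: every $\rho\in\mathcal{P}(T)$ is a weak$^{\ast}$ limit of ergodic $\rho_n$ with $h(\rho_n)\to h(\rho)$, and since $\rho\mapsto\rho(A)^2$ is weak$^{\ast}$ continuous one gets $\mathfrak{g}^{-}(\rho)=\lim_n\mathfrak{g}^{-}(\rho_n)=\lim_n\mathfrak{f}^{-}(\rho_n)\geq\inf\mathfrak{f}^{-}$. You instead leverage the structural result already proved in Section~\ref{conc}: because $G^{\ast}(s)=s^{2}/(2\beta)$ is strictly convex, the nonlinear equilibrium $\varpi$ is unique and equals the linear equilibrium for a H\"older potential, hence is ergodic, so $\mathfrak{f}^{-}(\varpi)=\mathfrak{g}^{-}(\varpi)=-\hat{P}(F)$ directly. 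Your route is more economical in this quadratic setting and avoids the entropy-density lemma altogether; the paper's route, on the other hand, does not rely on uniqueness or ergodicity of the maximizer and would carry over unchanged to concave $F$ for which Bogoliubov's problem might admit several solutions.
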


\begin{proof}
From the results of the previous subsection (see (\ref{hjk})), note that 
\begin{equation*}
\inf \mathfrak{g}^{\pm }(\mathcal{P}(T))=-\hat{P}(F).
\end{equation*}%
Thus, one has to prove that 
\begin{equation*}
\inf \mathfrak{f}^{\pm }(\mathcal{P}(T))=\inf \mathfrak{g}^{\pm }(\mathcal{P}%
(T)).
\end{equation*}%
From Inequality (\ref{eq delta 1}), $\mathfrak{g}^{+}\geq \mathfrak{f}^{+}$
and $\mathfrak{g}^{-}\leq \mathfrak{f}^{-}$, which trivially yield 
\begin{equation}
\inf \mathfrak{g}^{+}(\mathcal{P}(T))\geq \inf \mathfrak{f}^{+}(\mathcal{P}%
(T))\text{ \ \ and \ \ }\inf \mathfrak{g}^{-}(\mathcal{P}(T))\leq \inf 
\mathfrak{f}^{-}(\mathcal{P}(T)).  \label{eq ineq f e g}
\end{equation}%
Observe further that $\mathfrak{g}^{\pm }(\rho )=\mathfrak{f}^{\pm }(\rho )$
when $\rho $ is ergodic (see \cite{BPL} for more details). As $\mathfrak{f}%
^{+}$ is weak$^{\ast }$ lower semicontinuous and affine, its set of
minimizers is a nonempty compact face of $\mathcal{P}(T)$. In particular, $%
\mathfrak{f}^{+}$ is minimized by some ergodic probability. (Recall once
again that the extreme points of the convex set $\mathcal{P}(T)$\ of $T$%
-invariant probabilities are precisely the ergodic probabilities.) Hence,
from (\ref{eq ineq f e g}), we get the equality%
\begin{equation*}
\inf \mathfrak{g}^{+}(\mathcal{P}(T))=\inf \mathfrak{f}^{+}(\mathcal{P}(T)).
\end{equation*}%
Noting that the mapping $\rho \mapsto \rho (A)^{2}$ (appearing in the
definition of $\mathfrak{g}^{\pm }$) is weak$^{\ast }$\ continuous and the
entropy functional $\rho \mapsto h(\rho )$ is \textquotedblleft
pseudocontinuous\textquotedblright\ along ergodic measures, that is, for all 
$\rho \in \mathcal{P}(T)$ (not necessarily ergodic) there is a sequence $%
(\rho _{n})_{n\in \mathbb{N}}$ of ergodic measures converging to $\rho $ in
the weak$^{\ast }$ topology (see \cite{Kif} or \cite{L3}), such that 
\begin{equation*}
h(\rho )=\lim_{n\rightarrow \infty }h(\rho _{n}),
\end{equation*}%
we conclude again from (\ref{eq ineq f e g}) that 
\begin{equation*}
\inf \mathfrak{g}^{-}(\mathcal{P}(T))=\inf \mathfrak{f}^{-}(\mathcal{P}(T)).
\end{equation*}%
See again \cite{BPL} for all details.
\end{proof}

\section{Explicit examples of nonlinear phase transitions}

\label{mae}

In this section, we will present explicit examples that illustrate some
facts considered in Section \ref{Bogo}, in particular Subsection \ref{qua}.
Throughout this section, we only consider the case $d=2$, which, for
convenience, is identified with $\Omega =\{-1,1\}^{\mathbb{N}}$. Various
results we summarize below are taken from \cite{CDLS}, where explicit
expressions were obtained in the linear case for a certain potential $A$
that depends on infinite coordinates in the symbolic space $\Omega $. Using
these previous results, we will be able to obtain explicit expressions,
yielding examples of quadratic phase transitions\footnote{%
I.e., a nonlinear phase transition for $F(x)=\beta x^{2}/2$ with $\beta >0$.
See Definition \ref{phase transition}.} for a potential $A$ of the form %
\eqref{binc2x} below and of H\"{o}lder class.

Thus, here we are interested in determining explicitly the maximizers of 
\begin{equation}
\mathfrak{P}_{F,A}=\sup_{\rho \in \mathcal{P}(T)}\left\{ F(\rho (A))+h(\rho
)\right\}  \label{Kbombom}
\end{equation}%
for $\Omega =\{-1,1\}^{\mathbb{N}}$, a quadratic function $F(x)=\beta
x^{2}/2 $ for some parameter $\beta >0$, and examples of H\"{o}lder
potentials $A:\Omega \rightarrow \mathbb{R}$.

\begin{remark}
In our examples, we always have $\Omega =\{-1,1\}^{\mathbb{N}}$, but we
could have considered the $XY$ model for which the symbolic space is $%
[-1,1]^{\mathbb{N}}$, $[-1,1]$ being now the closed interval in $\mathbb{R}$%
. The dynamics is given by the shift and similar results as in Section \ref%
{Examples} below can be obtained for the product type potential described in
Section 1 of \cite{Mohr}.
\end{remark}

\subsection{Examples inspired by (anti)ferromagnetic systems \label{Examples}%
}

The H\"{o}lder potential $A:\Omega \rightarrow \mathbb{R}$ we have in mind
here for an explicit study of (\ref{Kbombom}) are defined as follows:\ Given
an absolutely convergent series $\sum_{n}a_{n}$ and two real parameters $%
J,h\in \mathbb{R}$, consider the continuous potential $A_{J,h}:\{-1,1\}^{%
\mathbb{N}}\rightarrow \mathbb{R}$ defined by 
\begin{equation}
A_{J,h}(x)=\frac{J}{2}\sum_{n=1}^{\infty }a_{n}x_{n}+hx_{1},  \label{odd1}
\end{equation}%
where $x=(x_{n})_{n\in \mathbb{N}}\in \{-1,1\}^{\mathbb{N}}$. We assume that 
$A=A_{J,h}$ is a H\"{o}lder potential. For example, this is the case when $%
a_{n}$ decays exponentially to zero, as $n\rightarrow \infty $.

In statistical mechanics, $A_{J,h}$ plays the role of minus the Hamiltonian.
For this reason, the cases $J>0$ and $J<0$ are called ferromagnetic and
antiferromagnetic, respectively. $J$ is the \textit{strength} of the
interaction. As before, the parameter $\beta $ is related to the inverse
temperature, whereas $h\in \mathbb{R}$ represents an external magnetic
field. Our main focus in this section is the case where $h=0$. In fact, note
that for formal computations, as done below, the prefactor $J/2$ could just
be incorporated in $a_{n}$ in \eqref{odd1}.

The following quantities%
\begin{equation}
s_{J,h}:=\sup_{x=(x_{1},x_{2},\ldots )\in \{-1,1\}^{\mathbb{N}}}\left\vert 
\frac{J}{2}\sum_{n=1}^{\infty }a_{n}x_{n}+hx_{1}\right\vert <\infty
\label{odd77}
\end{equation}%
and 
\begin{equation}
u_{J,h}:=\,h+\frac{J}{2}\sum_{n=1}^{\infty }a_{n}  \label{oro78}
\end{equation}%
play an important role in the properties of the potential $A_{J,h}$. To
simplify our expressions, we will also use the notation $u:=u_{2,0}$, which
is nothing else but the sum $\sum_{n}a_{n}$.

Since $A=A_{J,h}$ is by assumption a H\"{o}lder potential, using of course
the maximal entropy probability $\mu $ on $\{-1,1\}^{\mathbb{N}}$, we infer
from (\ref{feij502bis0}) that 
\begin{equation}
\hat{c}(t)=P(tA_{J,h})-\log 2,\qquad t\in \mathbb{R}.  \label{oro77}
\end{equation}%
Note that, for any $t\in \mathbb{R}$, 
\begin{equation*}
P(tA_{J,h})-\log 2=P(tA_{J,h}-\log 2)=\log \lambda _{tA_{J,h}-\log 2}\ ,
\end{equation*}%
thanks to the Ruelle(-Perron-Frobenius) theorem (Theorem \ref{rer1}).

It follows from Theorems 4.1, 3.1, and Corollary 3.2 in \cite{CDLS} (see
also Example 13 in Section 3.2 in \cite{LTF}) that the main eigenvalue of
the Ruelle operator for the potential $tA_{J,h}$, $t\in \mathbb{R}$, is $%
2\cosh (t\beta u_{J,h})$. Thus, taking $h=0$ as a particular case, for any $%
t\in \mathbb{R}$, the linear pressure for $tA_{J,0}$ is equal to%
\begin{equation*}
P(tA_{J,0})=\log (2\cosh (tu_{J,0}))
\end{equation*}%
(see again Theorem \ref{rer1})\ and, hence, 
\begin{equation}
\hat{c}(t)=\hat{c}_{A_{J,0}}(t)=\log (2\cosh (tu_{J,0})-\log 2=\log (\cosh
(tu_{J,0})).  \label{binc2}
\end{equation}%
Note that the pressure $P(t)=P(tA_{J,h})$ is invariant under the reflection $%
t\mapsto -t$. (In particular, Remark \ref{sisim} applies.)

To simplify our example even further, from now on we take a H\"{o}lder
potential of the form%
\begin{equation}
A(x)=\sum_{n=1}^{\infty }a_{n}x_{n}=A_{2,0}\,,  \label{binc2x}
\end{equation}%
where $x=(x_{n})_{n\in \mathbb{N}}\in \{-1,1\}^{\mathbb{N}}$. As the
potential $A$ is by assumption H\"{o}lder, no \emph{linear} phase transition
occurs, i.e., the linear equilibrium probability for the linear pressure for 
$A$ is unique. Here, we are interested in finding probabilities $\rho $ in $%
\{-1,1\}^{\mathbb{N}}$ maximizing the nonlinear pressure \eqref{Kbombom} in
the simplified case given by (\ref{binc2x}).

In this simplified case, for any $t\in \mathbb{R}$, 
\begin{equation}
p(t):=P(tA)=\log (2\cosh (tu)),  \label{p}
\end{equation}%
where we recall that $u:=u_{2,0}$. Note again that $p$ is an even function,
i.e., $p(t)=p(-t)$, and elementary computations yield%
\begin{eqnarray}
p^{\prime }(t) &=&u\tanh (tu)  \label{dePa} \\
p^{\prime \prime }(t) &=&u^{2}\mathrm{sech}^{2}(tu)  \label{dePax}
\end{eqnarray}%
for any $t\in \mathbb{R}$.\ In particular, for all $t\in \mathbb{R}$, $%
p^{\prime }(t)=-p^{\prime }(-t)$ and $p^{\prime \prime }(t)=p^{\prime \prime
}(-t)$. One then gets an explicit expression for the associate (large
deviation) rate function $I_{A}$, as defined by (\ref{feij68}): for any $%
x\in (-u,u)$, 
\begin{eqnarray}
I_{A}(x) &=&\sup_{t\in \mathbb{R}}\left\{ xt-p(t)+\log 2\right\} =\sup_{t\in 
\mathbb{R}}\left\{ xt-\log \left( \cosh \left( tu\right) \right) \right\} 
\notag \\
&=&xu^{-1}\tanh ^{-1}\left( xu^{-1}\right) -\log \left( \cosh \left( \tanh
^{-1}\left( xu^{-1}\right) \right) \right) ,  \label{xili}
\end{eqnarray}%
while $I_{A}(x)=\infty $ when $|x|\geq u$, thanks to (\ref{p})--(\ref{dePax}%
).

We give numerical computations now. Given $u\in \mathbb{R}$, the existence
of two points $t_{0}\neq 0$ and $-t_{0}$ satisfying the self-consistency
condition (\ref{outr71}) for $\beta =1$ is obtained in this particular case
by solving the equation 
\begin{equation*}
R_{u}(t):=u\tanh (tu)=p^{\prime }(t)=t,\text{\qquad }t\in \mathbb{R}.
\end{equation*}%
See Equation \eqref{dePa}. In Figures \ref{figt1}, \ref{figt12} and \ref%
{figt08}, we plot $R_{u}$ and the identity function $t\mapsto t$. Their
intersections thus determine the self-consistent points for the quadratic
case $F(x)=x^{2}/2$.

\begin{remark}
\label{poiu}Figure \ref{figt12} (for $u>1$ and $\beta =1$) shows the
existence of two points $t_{0}\neq 0$ and $-t_{0}$ satisfying the
self-consistency condition (\ref{outr71}). At fixed $\beta >0$ (cf. Section %
\ref{qua}), these two points $t_{1},t_{2}$ are pivotal in Section \ref{conj}
and also determine the value $P(\beta t_{1}A)=P(-\beta t_{2}A)$, which are
important in Section \ref{Til}. When $u<1$ and $\beta =1$, note that no
nonlinear phase transition occurs. See, e.g., Figure \ref{figt08}.
\end{remark}

\begin{figure}[h!]
\centering
\includegraphics[scale=0.8,angle=0]{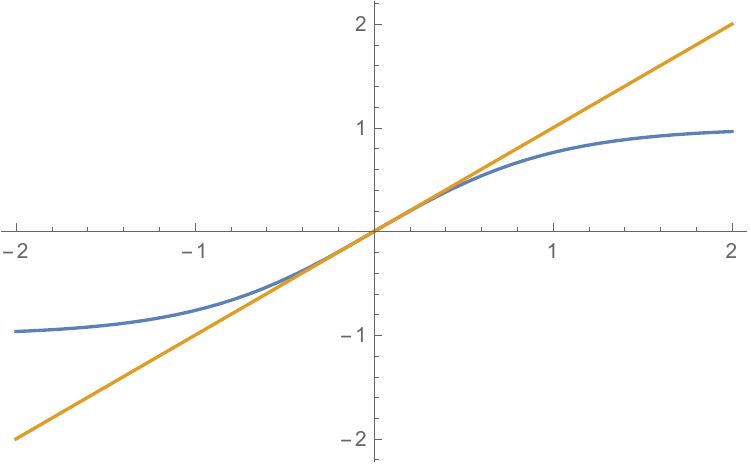}
\caption{In blue is the graph of $t \to R_1(t) $ and in yellow is the graph
of the identity. The two graphs intersect just at $t=0$; the only case which
would correspond to $p^{\prime \prime}(0)=1$. No non-zero point satisfying $%
p^{\prime }(t)=t$ when $u=1$.}
\label{figt1}
\end{figure}

\begin{figure}[h!]
\centering
\includegraphics[scale=0.8,angle=0]{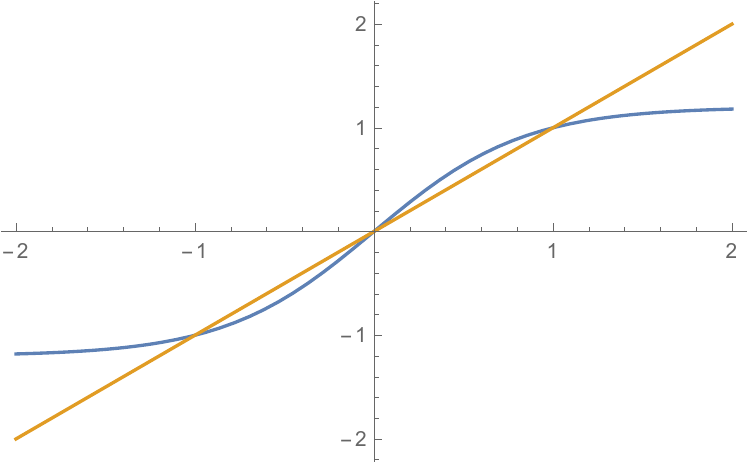}
\caption{In blue the graph of $t \to R_{1.2}(t) $ and in yellow the graph of
the identity. Excluding $t=0$, we get two other symmetric solutions $t_{0}$
and $-t_{0}$ of the equation $t = R_{1.2}(t)$, when $u=1.2>1$.}
\label{figt12}
\end{figure}

\begin{figure}[h]
\centering
\includegraphics[scale=0.8,angle=0]{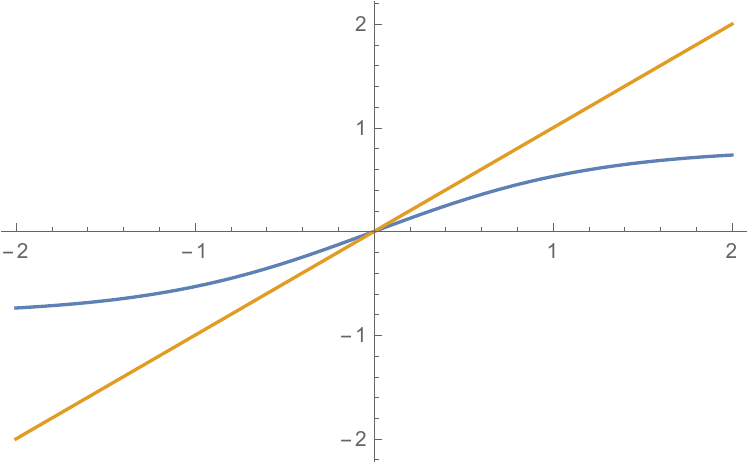}
\caption{In blue the graph of $t\rightarrow R_{0.8}(t)$ and in yellow the
graph of the identity. The two graphs intersect just at $t=0.$ This
corresponds to the case where $u<1$, here when $u=0.8$.}
\label{figt08}
\end{figure}

According to Theorem 4.1 in \cite{CDLS} (see also computations in Example 13
in Section 3.2 of \cite{LTF}), for the potential $A$ given by \eqref{binc2x}%
, the eigenfunction $\psi _{tA}$ associated with the main eigenvalue $%
\lambda _{tA}=2\cosh (\beta tu)$ of the Ruelle operator $\mathcal{L}_{tA}$
for any $t\in \mathbb{R}$ is explicitly given by%
\begin{equation}
\psi _{tA}(x)=\exp \left( t\sum_{n=1}^{\infty }\alpha _{n}x_{n}\right) ,
\label{eei}
\end{equation}%
for any $x=(x_{n})_{n\in \mathbb{N}}\in \{-1,1\}^{\mathbb{N}}$, where, for
any $n\in \mathbb{N}$,%
\begin{equation*}
\alpha _{n}:=\sum_{k=n+1}^{\infty }a_{k}=u-\sum_{k=1}^{n}a_{k}<\infty .
\end{equation*}%
We assume here that $\sum_{n}\alpha _{n}$ absolutely converges, which is
always the case when, for instance, $a_{n}$ tends exponentially fast to zero.

Furthermore, the eigenprobability $\nu _{tA}$ associated to the adjoint
operator $\mathcal{L}_{tA}^{\ast }$ is a product of independent (but not
i.i.d.) distributions. More precisely, it has the form 
\begin{equation}
\nu _{tA}=\prod_{n\in \mathbb{N}}\nu _{n},  \label{hgfr}
\end{equation}%
where $\nu _{n}$ is the probability distribution over $\{-1,1\}$ given by 
\begin{equation}
\nu _{n}(\{1\})=\frac{\exp (t\sum_{k=1}^{n}a_{k})}{2\cosh
(t\sum_{k=1}^{n}a_{k})}\ \ \ \text{and \ \ }\nu _{n}(\{-1\})=\frac{\exp
(-t\sum_{k=1}^{n}a_{k})}{2\cosh (t\sum_{k=1}^{n}a_{k})}.  \label{viu1}
\end{equation}%
Observe that $\nu _{tA}$ is not $T$-invariant. However, as $\psi _{tA}$ and $%
\nu _{tA}$ are explicitly known, one can get the exact equilibrium
probability $\rho _{tA}$ for the (effective) linear problem: By (\ref{eei}%
)--(\ref{viu1}) and Theorem \ref{rer1}, the equilibrium probability $\rho
_{tA}$ for the potential $tA$ defined by (\ref{binc2x}) is the i.i.d.
probability on $\{-1,1\}^{\mathbb{N}}$, with weights 
\begin{equation}
p_{\pm 1,t}:=\frac{e^{\pm tu}}{e^{tu}+e^{-tu}}=\rho _{tA}\left( \{\left( \pm
1,0,\ldots \right) \}\right) .  \label{espo1}
\end{equation}%
See again \cite[Section 6]{CDLS}. For the quadratic case, the solutions of
the nonlinear pressure of $A$ are linear equilibrium probabilities for
potentials of the form $tA$, for some value of $t$. In this case, we get
that the two solutions we are looking for the nonlinear pressure problem %
\eqref{Kbombom} are different i.i.d. probabilities of the form \eqref{espo1}.

\begin{remark}
\label{yyx}\label{yyx1}Let $t_{1},t_{2}\in \mathbb{R}$ with $t_{1}\neq t_{2}$%
. Then, the eigenprobabilities for the two potentials 
\begin{equation*}
-\log 2+\beta t_{1}\sum_{n=1}^{\infty }a_{n}x_{n}\ \ \ \text{and \ \ }-\log
2+\beta t_{2}\sum_{n=1}^{\infty }a_{n}x_{n},
\end{equation*}%
are different from each other. This will be important in Section \ref{conj}.
Thus, by (\ref{espo1}), the equilibrium probabilities $\rho _{t_{1}A}$ and $%
\rho _{t_{2}A}$ are also different, and if $t_{1},t_{2}$ satisfy the
self-consistency condition (\ref{outr71}) (cf. Remark \ref{poiu}), then a
nonlinear Gibbs phase transition occurs for the quadratic pressure problem %
\eqref{Kbombom}.
\end{remark}

\subsection{Example based on the generalized Curie-Weiss model \label{boex}}

We gather now some results on an example related to Section 2.1 of \cite%
{LeWa1}. More precisely, it refers to the limit of the family of
probabilities given by Equation (12) in \cite{LeWa1}, a topic to be
discussed in Section \ref{conj}.

Consider the potential $A:\{-1,1\}^{\mathbb{N}}\rightarrow \mathbb{R}$
defined by 
\begin{equation}
A=3I_{\overline{-1,-1}}-5I_{\overline{-1,1}}+I_{\overline{1,1}}+2I_{%
\overline{1,-1}},  \label{go}
\end{equation}%
where, for any $a,b\in \{-1,1\}$, $I_{\overline{a,b}}$ denotes the
characteristic function of the cylinder set 
\begin{equation*}
\{x=(x_{n})_{n\in \mathbb{N}}\in \{-1,1\}^{\mathbb{N}}\text{ }|\text{ }%
x_{1}=a,\text{ }x_{2}=b\}.
\end{equation*}%
One interesting aspect of this example is that it breaks the symmetry $%
P(-tA)=P(tA)$ of the linear pressure, which was satisfied in the example
given in Section \ref{Examples}.

Taking $F(x)=\beta x^{2}/2$ with $\beta >0$, we will show the possibility of
obtaining more than one self-consistent point, i.e., at least two different
parameters $t_{1},t_{2}$ satisfies the self-consistency condition (\ref%
{outr71}). But more importantly, our explicit results obtained in the
present example can illustrate some issues related to the results of Section
2.1 of \cite{LeWa1}.

To compute things explicitly, we take, for instance, $\beta =0.6$. Through
simple computations, we obtain that the pressure is 
\begin{equation*}
P(t\beta A)=\log \left( \frac{1}{2}e^{-3t}\left( e^{3.6t}+e^{4.8t}+e^{2.1t}%
\sqrt{4+e^{3t}-2e^{4.2t}+e^{5.4t}}\right) \right) .
\end{equation*}%
For this particular choice, we get two self-consistent points $t_{1}\simeq
-1 $ and $t_{2}=3$, see Figure \ref{fign4}. Note that the second
self-consistent point $t_{2}=3$ is mathematically exact (unlike $t_{1}\simeq
-1$).

Having in mind the (Bogoliubov) approximating pressure of Equation (\ref%
{quew}) and Theorem \ref{lgre}, the function 
\begin{equation}
t\,\rightarrow \varphi (t):=P_{\beta ,A}(t)=-\frac{\beta }{2}t^{2}+P(t\beta
A)-\log 2  \label{cory}
\end{equation}%
defined for any $t\in \mathbb{R}$ has local maxima at these points $%
t_{1},t_{2}$ (see Figure \ref{fign4}). However, $\varphi (t_{1})< \varphi
(t_{2}).$ Additionally, there is a local minimum at $t\simeq -0.155$. Note
that $\varphi^{\prime \prime} (t_1)\sim \varphi^{\prime \prime} (t_2).$

The above function $\varphi $ (up to the constant $-\log 2$) is denoted $%
\varphi _{OS}$ in \cite{LeWa1}, see in particular Theorem 1.3 in \cite{LeWa1}%
. Determining the maximum value of $\varphi (t)$ is an important issue in
estimating the limit of the probabilities $(\mu _{n,\beta })_{n\in \mathbb{N}%
}$ described by Equation (12) in \cite{LeWa1}. Indeed, to estimate the limit 
$n\rightarrow \infty $ of the quantity $\mu _{n,\beta }([\omega ])$ of
Equation (20) of \cite{LeWa1} for the proof of Theorem 1.3 of \cite{LeWa1},
the authors use the Laplace method. In our example, as $\varphi (t_{1})\neq
\varphi (t_{2})$ (see Figure \ref{fign4}), we get from \cite{LeWa1} that the
corresponding limit probability will be a unique eigenprobability and not
just a non-trivial convex combination of those mentioned in Theorem 1.3 of 
\cite{LeWa1}. This is, in particular, an important issue regarding the
expression (26) of \cite{LeWa1}.

\begin{figure}[h]
\centering
\includegraphics[scale=0.8,angle=0]{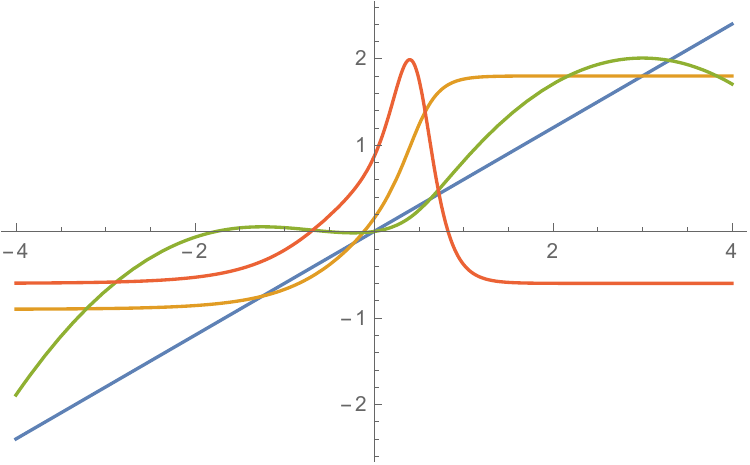}
\caption{For $\protect\beta =0.6$ and for $A$ as in \eqref{go} we show: the
blue line is the graph of $t\rightarrow \protect\beta t$, the yellow curve
is the graph of $t\rightarrow \frac{d}{dt}P(t\protect\beta A)$, the green
curve is the graph of $t\rightarrow \protect\varphi (t)$ and the red curve
is the graph of $t\rightarrow \protect\varphi ^{\prime \prime }(t)$. The
value $t=3$ gives an exact parameter where the self-consistency condition is
true.}
\label{fign4}
\end{figure}

\section{Quadratic mean-field Gibbs probabilities\label{conj}}

In this section, we are interested in the weak$^{\ast }$ limit of the
measure \eqref{xiux23}, i.e., in quadratic mean-field Gibbs probabilities
(Definition \ref{loloy}) and quadratic mean-field Gibbs phase transition
(Definitions \ref{gog1}--\ref{gogrexx}). The results in this section are
related to Theorem 1.3 (4) of the paper \cite{LeWa1} on the generalized
Curie-Weiss model (or Theorem 3 in \cite{LeWa2}). Our main goal is to
present an explicit example of the existence of a nonlinear phase
transition, rather than merely demonstrating the possibility of its
occurrence.

\subsection{Linear mean-field Gibbs probabilities}

Recall that $\Omega :=\{1,2,\ldots ,d\}^{\mathbb{N}}$ with $d\in \mathbb{N}$%
. Consider a linear equilibrium probability $\mu _{f}$ for the H\"{o}lder
(continuous) potential $f:\Omega \rightarrow \mathbb{R}$, see Definition \ref%
{equilibrium probability linear}. We will assume that $f$ is normalized,
that is, $\mathcal{L}_{f}(1)=1$. Let $g:\Omega \rightarrow \mathbb{R}$ be a
second H\"{o}lder function. Given $n\in \mathbb{N}$, we then define the
probability measure $\mathrm{m}_{n}=\mathrm{m}_{n,f,g}$ on $\Omega $ by 
\begin{equation}
\mathrm{m}_{n}(\psi )=\int \psi (x)\mathrm{m}_{n}(\mathrm{d}x)=\frac{\int
\psi (x)e^{ng_{n}(x)}\mu _{f}(\mathrm{d}x)}{\int e^{ng_{n}(x)}\mu _{f}(%
\mathrm{d}x)},  \label{trova}
\end{equation}%
where, for any $\varphi \in C(\Omega )$, we recall that $\varphi _{n}$, $%
n\in \mathbb{N}$, are the so-called the Birkhoff averages defined by (\ref%
{Eq Birk Av}), that is,%
\begin{equation}
\varphi _{n}:=\frac{1}{n}(\varphi +\varphi \circ T+\cdots +\varphi \circ
T^{n-1}),\text{ \ \ }n\in \mathbb{N}.  \label{Eq Birk Avbis}
\end{equation}

The probability measures $\mathrm{m}_{n}$, $n\in \mathbb{N}$, are called
here the \textit{linear mean-field Gibbs probability at time }$n$ for the
pair $\mu _{f}$ and $g$. It is natural to consider the weak$^{\ast }$ limit $%
\mathrm{m}$ of $\mathrm{m}_{n}$, as $n\rightarrow \infty $. We call $\mathrm{%
m}$ the \textit{linear mean-field Gibbs probability} for the pair $\mu _{f}$
and $g$. In fact, they are closely related to the concept of DLR
probabilities as described, for instance, in Sections 4 in \cite{CL1} and 
\cite{CLS}. Linear mean-field Gibbs probability exists as stated in the next
theorem:

\begin{theorem}
\label{trit}For any not necessarily normalized H\"{o}lder functions $%
f,g:\Omega \rightarrow \mathbb{R}$, as $n\rightarrow \infty $, the weak$%
^{\ast }$ limit $\mathrm{m}=\mathrm{m}_{f,g}$ of $\mathrm{m}_{n}$ defined by
(\ref{trova}) exists and equals the eigenprobability $\nu _{f+\,g}$ for the
Ruelle operator $\mathcal{L}_{f+g}$.
\end{theorem}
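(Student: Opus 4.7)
The plan is to reduce the identity to a single application of the Ruelle(-Perron-Frobenius) theorem via a standard duality trick. Write $n g_n = S_n g := g + g\circ T + \cdots + g\circ T^{n-1}$, so that for any continuous $\psi$,
\begin{equation*}
\mathrm{m}_n(\psi) \;=\; \frac{\int \psi(x)\,e^{S_n g(x)}\,\mu_f(\mathrm{d}x)}{\int e^{S_n g(x)}\,\mu_f(\mathrm{d}x)}.
\end{equation*}
I would first reduce to the case where $f$ is normalized, i.e.\ $\mathcal{L}_f(1)=1$, by replacing $f$ with the cohomologous normalized potential $\tilde f := f + \log h_f - \log(h_f\circ T) - \log \lambda_f$; this does not alter $\mu_f$, and the precise relation between $\nu_{f+g}$ and $\nu_{\tilde f + g}$ is a routine bookkeeping. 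So assume from now on that $f$ is normalized, which gives $\lambda_f=1$, $h_f\equiv 1$ and $\mu_f = \nu_f$, together with the crucial invariance $\int \phi\,\mathrm{d}\mu_f = \int \mathcal{L}_f^n(\phi)\,\mathrm{d}\mu_f$ for every $n\in\mathbb{N}$ and $\phi\in C(\Omega)$.

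The key algebraic identity is the following computation with the Ruelle operator. Applied to $\phi = \psi\, e^{S_n g}$, the defining formula for $\mathcal{L}_f^n$ yields
\begin{equation*}
\mathcal{L}_f^n(\psi\, e^{S_n g})(x)
 = \sum_{T^n y = x} e^{S_n f(y)}\, e^{S_n g(y)}\, \psi(y)
 = \mathcal{L}_{f+g}^n(\psi)(x),
\end{equation*}
since $S_n f + S_n g = S_n(f+g)$. Combining with the invariance of $\mu_f$ under $\mathcal{L}_f^*$, I would obtain the clean identity
\begin{equation*}
\int \psi(x)\,e^{S_n g(x)}\,\mu_f(\mathrm{d}x) \;=\; \int \mathcal{L}_{f+g}^n(\psi)(x)\,\mu_f(\mathrm{d}x).
\end{equation*}
This rewrites both numerator and denominator of $\mathrm{m}_n(\psi)$ in terms of the iterates of $\mathcal{L}_{f+g}$, which is the whole point: we have traded an evolving integrand for a fixed operator acting $n$ times.

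At this stage I would invoke the Ruelle-Perron-Frobenius theorem (Theorem \ref{rer1}) for the Hölder potential $f+g$: with the normalization $\nu_{f+g}(h_{f+g})=1$, one has the uniform convergence
\begin{equation*}
\lambda_{f+g}^{-n}\,\mathcal{L}_{f+g}^n(\psi) \;\longrightarrow\; h_{f+g}\cdot \nu_{f+g}(\psi)
\end{equation*}
in the sup norm on $C(\Omega)$. Integrating against $\mu_f$ and using $\psi\equiv 1$ for the denominator gives
\begin{equation*}
\lambda_{f+g}^{-n}\int \psi\,e^{S_n g}\,\mathrm{d}\mu_f \longrightarrow \mu_f(h_{f+g})\cdot \nu_{f+g}(\psi),\qquad
\lambda_{f+g}^{-n}\int e^{S_n g}\,\mathrm{d}\mu_f \longrightarrow \mu_f(h_{f+g}),
\end{equation*}
and since $h_{f+g}>0$ strictly, the denominator limit $\mu_f(h_{f+g})$ is strictly positive. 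Taking the ratio, the factors $\lambda_{f+g}^{-n}$ and $\mu_f(h_{f+g})$ cancel, and I conclude $\mathrm{m}_n(\psi)\to \nu_{f+g}(\psi)$ for every $\psi\in C(\Omega)$, i.e.\ $\mathrm{m}_n \to \nu_{f+g}$ in the weak$^\ast$ topology.

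The main (mild) obstacle is really only the non-normalized case: when $f$ is not normalized, the same argument applied to $\tilde f$ delivers convergence to the eigenprobability of $\mathcal{L}_{\tilde f+g}$, and one must verify that (with the convention the authors adopt) this is precisely $\nu_{f+g}$. This is handled by tracking the conjugation $\mathcal{L}_{\tilde f+g}(\cdot) = \lambda_f^{-1} h_f^{-1}\mathcal{L}_{f+g}(h_f\,\cdot)$ on the eigenprobability, which is a direct calculation using duality. Everything else is a straightforward packaging of the two ingredients: the identity $\mathcal{L}_f^n(\psi e^{S_n g}) = \mathcal{L}_{f+g}^n(\psi)$ and the spectral gap of the Ruelle operator on Hölder class.
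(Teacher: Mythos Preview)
Your argument is correct and is precisely the standard one. The paper itself does not give a proof of this theorem; it simply refers to \cite{LTF}. However, the exact mechanism you use---the identity $\mathcal{L}_f^n(\psi\,e^{S_n g})=\mathcal{L}_{f+g}^n(\psi)$ together with $\mathcal{L}_f^\ast\mu_f=\mu_f$ for normalized $f$, followed by the Ruelle--Perron--Frobenius asymptotics $\lambda_{f+g}^{-n}\mathcal{L}_{f+g}^n(\psi)\to h_{f+g}\,\nu_{f+g}(\psi)$---is exactly what the authors themselves deploy later in Section~\ref{laplace} when they compute the quadratic mean-field Gibbs probability (see the chain of equalities leading to \eqref{nanv}). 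So your sketch is not merely compatible with the paper's viewpoint; it is the same computation specialized to the linear case.

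One comment on the non-normalized reduction, which you rightly flagged as the only place requiring care. If you carry out the bookkeeping you allude to, the limit you obtain is $\nu_{\tilde f+g}$, and the conjugation $\mathcal{L}_{\tilde f+g}(\cdot)=\lambda_f^{-1}h_f^{-1}\mathcal{L}_{f+g}(h_f\,\cdot)$ shows that $d\nu_{\tilde f+g}\propto h_f\,d\nu_{f+g}$, which is \emph{not} $\nu_{f+g}$ in general. This is consistent with the paper's standing hypothesis stated just before the theorem (``We will assume that $f$ is normalized''), under which $h_f\equiv 1$ and the issue disappears; the phrase ``not necessarily normalized'' in the theorem is best read as referring to $g$ (and hence $f+g$). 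Your proof is complete and correct under that reading.
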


\noindent See, for instance, Section 4.7 of \cite{LTF} for a proof. Note
that this result refers to the lattice $\mathbb{N}$ and, because of the lack
of $T$-invariance, differs from the corresponding results for the lattice $%
\mathbb{Z}$, as stated, for instance, in Corollary 7.13 of \cite{Rue}, where
one gets stationarity for translation on the lattice $\mathbb{Z}$ for free.

In other words, thanks to Theorem \ref{trit}, in the linear mean-field Gibbs
probability setting, eigenprobabilities for the Ruelle operator appear in a
natural way. We show below that the same phenomenon occurs in the nonlinear
setting.

\subsection{Quadratic mean-field Gibbs probabilities\label{laplace}}

Recall again that $\Omega :=\{1,2,\ldots ,d\}^{\mathbb{N}}$ ($d\in \mathbb{N}
$) and fix again a linear equilibrium probability $\mu _{f}$ for the H\"{o}%
lder potential $f:\Omega \rightarrow \mathbb{R}$, which is normalized, i.e., 
$\mathcal{L}_{f}(1)=1$ (to use, e.g., \eqref{uut}). Recall also that $\mu
_{n}^{f}$ is the probability such that, for any open interval $O\subseteq 
\mathbb{R}$, 
\begin{equation*}
\mu _{n}^{f}(O)=\mu \left( \left\{ z\mid \,f_{n}(z)\in O\right\} \right) ,
\end{equation*}%
$f_{n}$, $n\in \mathbb{N}$, being the so-called the Birkhoff averages
defined by (\ref{Eq Birk Avbis}). See Equation (\ref{binc78}). In (\ref%
{binc781}) this definition is generalized to define for two H\"{o}lder
potentials $f,g:\Omega \rightarrow \mathbb{R}$ the probability measure $\mu
_{n}^{f,g}$ by 
\begin{equation}
\mu _{n}^{f,g}(O)=\mu _{f}\left( \left\{ z\mid \,g_{n}(z)\in O\right\}
\right) .  \label{binc78xx}
\end{equation}%
Recall also that, in this case, the large deviation rate function $I_{f,g}$
(see \eqref{binc78x}) is the Legendre transform of the function $\hat{c}%
_{f,g}$ given in Equation (\ref{feij67}). In other words, 
\begin{equation*}
I_{f,g}(x)=tx-\hat{c}_{f,g}(t)=
\end{equation*}%
\begin{equation}
t\,x-P(f+tg)+P(f)=tx-\log \lambda _{f+tg}+P(f)\geq 0  \label{binc78xxx}
\end{equation}%
for each real parameter $t$ satisfying the self-consistency condition 
\begin{equation*}
\hat{c}_{f,g}^{\prime }(t)=x=\frac{d\,P(f+tg)}{dt}=\mu _{f+tg}(A)=\int
g(x)\mu _{f+tg}(\mathrm{d}x),
\end{equation*}%
where $\mu _{f+tg}$ is the linear equilibrium probability for the potential $%
f+tg$. Alternatively, from \eqref{binc78xz} (see also \eqref{outr}), given $%
\beta >0$ and $t$ satisfying the above self-consistency condition, one has
that%
\begin{equation}
I_{f,\beta g}(x)=P(f)-h(\mu _{f+t\beta g})-\beta \mu _{f+t\beta g}(f)\geq 0,
\label{binc78xzy}
\end{equation}%
where $\mu _{f+\beta tg}$ is the linear equilibrium probability for the
potential $f+\beta tg$.

\begin{remark}
Sometimes in this section we will take $f=-\log d$, in particular $\mathcal{L%
}_{f}^{\ast }(\mu )=\mu $ and $P(f)=0$, where $\mu $ is the probability
measure of maximal entropy (as in \cite{LeWa1}). This will be the case, for
instance, in the explicit examples we will exhibit.
\end{remark}

We are interested in explicit expressions related to Theorem 1.3 (4) of the
paper \cite{LeWa1} on the generalized Curie-Weiss model (or Theorem 3 in 
\cite{LeWa2}). To this end, given $\beta >0$, we study the weak$^{\ast }$
limit of the probability measures $\mathfrak{m}_{n}$, $n\in \mathbb{N}$,
defined for all continuous functions $\psi \in C(\Omega )$ by 
\begin{equation}
\mathfrak{m}_{n}(\psi )=\int \psi (x)\mathfrak{m}_{n}(\mathrm{d}x)=\frac{%
\int \psi (x)e^{\frac{\beta n}{2}g_{n}(x)^{2}}\mu _{f}(\mathrm{d}x)}{\int e^{%
\frac{\beta n}{2}g_{n}(x)^{2}}\mu _{f}(\mathrm{d}x)},  \label{xiu0}
\end{equation}%
named \textit{quadratic mean-field Gibbs probabilities at time }$n$, for the
triple $\beta ,\mu _{f},g$. Compare with Equation (9) in \cite{LeWa1}. In
Definition \ref{loloy}, the weak$^{\ast }$ limit $\mathfrak{m}=\mathfrak{m}%
_{\beta ,f,g}$ is called the \textit{quadratic mean-field Gibbs probability}
for $\beta ,\mu _{f}$ and $g$. Compare with Theorem 1.3 (4) in \cite{LeWa1}.

Here, a particular case of interest is when $d=2$, i.e., $\Omega \equiv
\{-1,1\}^{\mathbb{N}}$, and $g:\{-1,1\}^{\mathbb{N}}\rightarrow \mathbb{R}$
is of the form 
\begin{equation}
g(x)=\sum_{n=1}^{\infty }a_{n}x_{n},  \label{binc2xx}
\end{equation}%
for any $x=(x_{n})_{n\in \mathbb{N}}\in \{-1,1\}^{\mathbb{N}}$, where $%
(a_{n})_{n\in \mathbb{N}}$ is a sequence converging exponentially to zero as 
$n\rightarrow \infty $ (making $g$ H\"{o}lder continuous). We will present a
different expression for the limit 
\begin{equation}
\mathfrak{m}(\psi )=\lim_{n\rightarrow \infty }\mathfrak{m}_{n}(\psi
)=\lim_{n\rightarrow \infty }\frac{\int \psi (x)e^{\frac{\beta n}{2}%
g_{n}(x)^{2}}\mu _{f}(\mathrm{d}x)}{\int e^{\frac{\beta n}{2}%
g_{n}(x)^{2}}\mu _{f}(\mathrm{d}x)},\ \ \ \psi \in C(\Omega ),  \label{xiu}
\end{equation}%
(see (\ref{xiux23})) in Equation \eqref{nanv}. The latter will help us to
get more precise information about the limit \eqref{xiu} via the Laplace
method (also known as the method of stationary phase). In this regard we
follow here the main lines of the proofs presented in Section 2.1 of \cite%
{LeWa1} and Section 4.1 of \cite{LeWa2}, but for a general H\"{o}lder
normalized potential $f$. In fact, in \cite{LeWa1} the authors only consider
the special case $f=-\log d$. Here, we will also address a few other new
issues not explicitly mentioned in \cite{LeWa1}, such as the relation with
the self-consistency condition, the quadratic pressure, and examples of
quadratic mean-field Gibbs phase transitions (cf. Definitions \ref{gog1} and %
\ref{gogrexx}). As mentioned above, however, our explicit examples of phase
transition refer to the special the case $f=-\log 2$ and we will use results
of the last sections to present and discuss them.

Below we will adapt Theorem \ref{trit} to the quadratic (nonlinear) case for
a general $\mu _{f}$, that is, for $f:\{-1,1\}^{\mathbb{N}}\rightarrow 
\mathbb{R}$ being a general H\"{o}lder function. Then we address the
existence of quadratic mean-field Gibbs phase transitions and present
explicit examples of them (see Theorem \ref{otwr}). Notice that we reserved
the (simpler) terminology \textit{quadratic phase transition} for the
non-uniqueness of \textit{quadratic equilibrium states} for a given
potential, a different issue which was already discussed in the last section
(see Remark \ref{yyx1}). See Definitions \ref{gog1} and \ref{gogrexx}.

First, recall that the following identity is referred to as the
Hubbard-Stratonovich transformation: 
\begin{equation}
e^{a^{2}}=\frac{1}{\sqrt{2\pi }}\int_{-\infty }^{\infty }e^{-\frac{y^{2}}{2}+%
\sqrt{2}ay}\mathrm{d}y,  \label{HS}
\end{equation}%
where $a\in \mathbb{R}$ is any real constant. For some fixed $\beta >0$ and
each $n\in \mathbb{N}$ we consider the change of coordinates $t=y/\sqrt{%
\beta n}$ to get 
\begin{equation}
e^{a^{2}}=\sqrt{\frac{\beta n}{2\pi }}\int_{-\infty }^{\infty }e^{-\frac{%
\beta n}{2}t^{2}+a\sqrt{2\beta n}t}\mathrm{d}t.  \label{HS12}
\end{equation}%
In fact, the above expression is used to transform a quadratic (nonlinear)
problem into a linear one. Notice that such an argument was used in \cite%
{LeWa1}, in an essential way (see in particular Section 2.1 of \cite{LeWa1}).

Recall that $\mu _{f}$ is the equilibrium probability $\mu _{f}$ for a H\"{o}%
lder potential $f:\Omega \rightarrow \mathbb{R}$ and $g:\Omega \rightarrow 
\mathbb{R}$ is an arbitrary H\"{o}lder function. Fix the parameter $\beta >0$
and take a continuous function $\psi \in C(\Omega )$. Let%
\begin{eqnarray*}
Z_{n,\beta ,f,g,\psi } &:=&\int e^{\frac{\beta n}{2}g_{n}(x)^{2}}\psi (x)\mu
_{f}(\mathrm{d}x), \\
Z_{n,\beta ,f,g} &:=&\int e^{\frac{\beta n}{2}g_{n}(x)^{2}}\mu _{f}(\mathrm{d}x).
\end{eqnarray*}%
Then, adapting the argument used in Section 2.1 of \cite{LeWa1} we infer
from \eqref{HS12} for $a=g_{n}(x)\sqrt{\beta n}/\sqrt{2}$ and Fubini's
theorem that 
\begin{eqnarray*}
Z_{n,\beta ,f,g,\psi } &=&\sqrt{\frac{\beta n}{2\pi }}\int_{-\infty
}^{\infty }e^{-\frac{\beta n}{2}t^{2}}\int e^{\beta tng_{n}(x)}\psi (x)\mu
_{f}(\mathrm{d}x), \\
Z_{n,\beta ,f,g} &=&\sqrt{\frac{\beta n}{2\pi }}\int_{-\infty }^{\infty }e^{-%
\frac{\beta n}{2}t^{2}}\int e^{\beta tng_{n}(x)}\mu _{f}(\mathrm{d}x)\mathrm{%
d}t.
\end{eqnarray*}%
In this way, we get an alternative expression for the probability measures $%
\mathfrak{m}_{n}$, $n\in \mathbb{N}$, originally defined by (\ref{xiu0}):%
\begin{equation}
\mathfrak{m}_{n}(\psi )=\frac{Z_{n,\beta ,f,g,\psi }}{Z_{n,\beta ,f,g}}=%
\frac{\int_{-\infty }^{\infty }e^{-\frac{\beta n}{2}t^{2}}\int e^{\beta
tng_{n}(x)}\psi (x)\mu _{f}(\mathrm{d}x)\mathrm{d}t}{\int_{-\infty }^{\infty
}e^{-\frac{\beta n}{2}t^{2}}\int e^{\beta tng_{n}(x)}\mu _{f}(\mathrm{d}x)%
\mathrm{d}t},\ \ \ \psi \in C(\Omega ).  \label{HS1}
\end{equation}%
Observe that the right-hand side of \eqref{HS1} does not exactly have the
same form as the right-hand side of Equation (20) in \cite{LeWa1}, but this
is a minor issue (at this point we are closer to Equation (3) of \cite{LeWa1}%
).

In this context, we will look closely at the special case where $\Omega
=\{-1,1\}^{\mathbb{N}}$ and $g:\{-1,1\}^{\mathbb{N}}\rightarrow \mathbb{R}$
is of the form (\ref{binc2xx}). This working example, which was analyzed in
detail in Section \ref{Examples} (see in particular Equation (\ref{binc2x}%
)), will clearly illustrate some of the main issues of our proof.

\begin{theorem}
\label{otwr7}Let $f:\{-1,1\}^{\mathbb{N}}\rightarrow \mathbb{R}$ be a
normalized H\"{o}lder potential, $g$ defined by (\ref{binc2xx}) and $\beta
>0 $. There is a quadratic mean-field Gibbs phase transition in the sense
that 
\begin{equation}
\mathfrak{m}(\psi )=\frac{\mu \left( h_{f+\beta t_{1}g}\right) \nu _{f+\beta
t_{1}g}\left( \psi \right) +\mu \left( h_{f+\beta t_{2}g}\right) \nu
_{f+\beta t_{2}g}\left( \psi \,\right) }{\mu \left( h_{f+\beta
t_{1}g}\right) +\mu \left( h_{f+\beta t_{2}g}\right) },\ \ \ \psi \in
C(\Omega ),  \label{xixu3}
\end{equation}%
where $t_{1},t_{2}$ satisfies the self-consistency condition (\ref{geger})
and (\ref{pirt}), $\mu $ is the measure of maximal entropy, and for $j\in
\{1,2\}$, $h_{f+\beta t_{j}g}$ and $\nu _{f+\beta t_{j}g}$ are,
respectively, the main eigenfunction and the eigenprobability for the Ruelle
operator $\mathcal{L}_{f+\beta t_{j}g}$. More precisely, $\nu _{f+\beta
t_{j}g}$ is given by (\ref{hgfr}), and $h_{f+\beta t_{j}g}$ is obtained in
explicit terms from (\ref{eei}), with $t=1$ and $A=f+\beta t_{j}g$, $j=1,2$.
\end{theorem}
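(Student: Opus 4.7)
The strategy is to combine the Hubbard--Stratonovich rewriting (\ref{HS1}) with the Ruelle--Perron--Frobenius theorem (Theorem \ref{rer1}) and Laplace's method applied to the Bogoliubov approximating pressure (\ref{joq2}). Since $f$ is normalized, $\mathcal{L}_f^{\ast }\mu _{f}=\mu _{f}$, and iterating this duality with $\varphi = \psi\, e^{n\beta t g_n}$ yields the exact identity
$$\int \psi(x)\, e^{n\beta t g_n(x)}\, \mu _{f}(\mathrm{d}x) \;=\; \int \mathcal{L}_{f+\beta t g}^{n}\psi\, d\mu _{f}, \qquad n\in \mathbb{N},\ t\in \mathbb{R},$$
because the Birkhoff factor $e^{S_n f}$ produced by $\mathcal{L}_f^n$ combines with $e^{n\beta t g_n}=e^{S_n(\beta t g)}$ into $e^{S_n(f+\beta tg)}$, turning $\mathcal{L}_f^n$ into $\mathcal{L}_{f+\beta tg}^n$. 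Applying Theorem \ref{rer1} to the H\"older potential $f+\beta t g$, one has $\lambda_{f+\beta tg}=e^{P(f+\beta tg)}$ and
$$e^{-nP(f+\beta t g)}\int \mathcal{L}_{f+\beta t g}^{n}\psi\, d\mu _{f} \;\xrightarrow[n\to\infty]{}\; \mu _{f}\!\left(h_{f+\beta t g}\right)\cdot \nu _{f+\beta t g}(\psi),$$
with exponentially fast convergence that is locally uniform in $t$ thanks to analyticity of the leading eigendata in $t$ and the spectral gap of $\mathcal{L}_{f+\beta tg}$ on the H\"older space.

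Substituting this asymptotic into (\ref{HS1}) and recognising the exponent $-\tfrac{\beta}{2}t^{2}+P(f+\beta t g)$ as the Bogoliubov approximating pressure $\varphi(t)=P_{\beta,g}(t)$ of (\ref{joq2}), both the numerator and the denominator of $\mathfrak{m}_{n}(\psi)$ become integrals of the form $\int e^{n\varphi(t)}F(t)\, dt$, with $F(t)=\mu_{f}(h_{f+\beta t g})\,\nu_{f+\beta t g}(\psi)$ and $F(t)=\mu_{f}(h_{f+\beta t g})$, respectively. By Lemma \ref{qte}, $\varphi(t)<R-\tfrac{\beta}{4}t^{2}$ for $|t|>B$, so the $t$-integrals can be truncated to $[-B,B]$ with super-exponentially small error. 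The critical points of $\varphi$ coincide with the solutions of the self-consistency equation (\ref{pirt}); for $g$ of the form (\ref{binc2xx}) and (in the concrete example) $f=-\log 2$, the analysis of Section \ref{Examples} (Remark \ref{poiu}, Figure \ref{figt12}) produces in the phase-transition regime two nonzero symmetric maximisers $t_{2}=-t_{1}$, and the $t\mapsto -t$ symmetry of $P(\beta t g)$, inherited from the $x_{n}\mapsto -x_{n}$ symmetry of $g$ and of $\mu_{f}$, ensures both $\varphi(t_{1})=\varphi(t_{2})$ and $|\varphi''(t_{1})|=|\varphi''(t_{2})|$.

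Standard Laplace asymptotics then give, for any continuous $F$,
$$\int_{-B}^{B} e^{n\varphi(t)} F(t)\, dt \;\sim\; \sqrt{\frac{2\pi}{n\,|\varphi''(t_{1})|}}\, e^{n\varphi(t_{1})}\, \bigl(F(t_{1})+F(t_{2})\bigr),$$
and dividing the numerator asymptotic by the denominator asymptotic causes the common prefactors $\sqrt{2\pi/(n|\varphi''(t_{1})|)}\,e^{n\varphi(t_{1})}$ to cancel, producing exactly the convex combination (\ref{xixu3}); for the concrete case $f=-\log 2$ the measure $\mu_{f}$ coincides with the maximal entropy measure $\mu$ in the theorem statement. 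The main obstacle is upgrading the pointwise Ruelle--Perron--Frobenius convergence to a uniform-in-$t$ asymptotic whose remainder is small enough to survive integration against $e^{n\varphi(t)}$ without spoiling the leading Laplace order. This requires continuity (in fact analyticity) of the leading eigendata and of the spectral gap of $\mathcal{L}_{f+\beta tg}$ as functions of $t$ on a H\"older space, a quadratic Taylor expansion of $\varphi$ around each $t_{j}$ valid uniformly on a neighbourhood, and the tail bound from Lemma \ref{qte} to dominate the non-critical region; once this uniform control is in hand, the remaining Laplace analysis is routine.
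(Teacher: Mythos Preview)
Your proposal is correct and follows essentially the same route as the paper: Hubbard--Stratonovich rewriting (\ref{HS1}), the identity $\mathcal{L}_{f}^{n}(e^{\beta t n g_{n}}\psi)=\mathcal{L}_{f+\beta tg}^{n}(\psi)$ via normalization of $f$, uniform-in-$t$ Ruelle--Perron--Frobenius convergence controlled by the spectral gap (the paper isolates this as Lemma \ref{tosou}, invoking lower semi-continuity of the gap from \cite{Henn} rather than analyticity, but either works), truncation to a compact $t$-interval via Lemma \ref{qte}, and Laplace's method on $v=P_{\beta,g}$ with the symmetry $v(t_{1})=v(t_{2})$, $v''(t_{1})=v''(t_{2})$ in the case $f=-\log 2$. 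You also correctly flag that the derivation naturally produces $\mu_{f}(h_{f+\beta t_{j}g})$, which becomes the $\mu(h_{f+\beta t_{j}g})$ of the statement only because $\mu_{f}=\mu$ when $f=-\log 2$.
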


\noindent When $f=-\log 2$, the probabilities $\nu _{f+\beta t_{1}g}$ and $%
\nu _{f+\beta t_{2}g}$, are different from each other, as explained in
Remark \ref{yyx}. However, note that in this case, the corresponding
eigenvalues satisfy $\log \lambda _{f+\beta t_{1}g}=\log \lambda _{f+\beta
t_{2}g}$.

We point out that our analysis of the probability $\mathfrak{m}$
(considering the eigenprobability $\mu _{f}$ for a general H\"{o}lder
potential $f$) is a little bit different from the corresponding one in \cite%
{LeWa1}, where only the case of the maximal entropy probability $\mu $ was
considered. In fact, we get a slightly different expression for $\mathfrak{m}
$ (see below \eqref{nanv}), as compared to Equation (26) of \cite{LeWa1}.
Additionally, we note that, from time to time, we will adapt certain useful
technical results from \cite{LeWa1} and \cite{LeWa2} in our proofs in order
to shorten them.

Fix $\beta >0$. It is known (see \cite{PP}) that, for any $t\in \mathbb{R}$,
any H\"{o}lder functions $f,g:\Omega \rightarrow \mathbb{R}$, all $x\in
\Omega $ and $\psi \in C(\Omega )$, 
\begin{equation}
\lim_{k\rightarrow \infty }\frac{\mathcal{L}_{f+\beta tg}^{k}(\psi )(x)}{%
\lambda _{f+\beta tg}^{k}}=h_{f+\beta tg}(x)\nu _{f+\beta tg}\left( \psi
\right) ,  \label{super134}
\end{equation}%
where $\lambda _{f+\beta tg}$, $h_{f+\beta tg}$ and $\nu _{f+\beta tg}$ are,
respectively, the eigenvalue, the eigenfunction, and the eigenprobability
for the Ruelle operator $\mathcal{L}_{f+\beta tg}$. Moreover, 
\begin{equation*}
\mathcal{L}_{f+\beta tg}^{\ast }(\nu _{f+\beta tg})=\lambda _{f+\beta tg}\nu
_{f+\beta tg}.
\end{equation*}%
We need a uniform error estimation for the limit \eqref{super134}. In fact,
adapting to our setting, the argument used to get (18) in \cite{LeWa1}, we
obtain the following estimate:

\begin{lemma}
\label{tosou}Take $\beta ,R>0$ and two H\"{o}lder functions $f,g:\Omega
\rightarrow \mathbb{R}$, then, for any $t\in \lbrack -R,R]$, $x\in \Omega $
and $\psi \in C(\Omega )$, 
\begin{equation}
\left\vert \frac{\mathcal{L}_{f+\beta tg}^{k}(\psi )(x)}{\lambda _{f+\beta
tg}^{k}}-h_{f+\beta tg}(x)\nu _{f+\beta tg}\left( \psi \right) \right\vert =%
\mathcal{O}\left( e^{-k\epsilon }\right)  \label{naoes}
\end{equation}%
for some strictly positive constant $\epsilon =\epsilon (\beta ,f,g,R)>0$
only depending upon the parameters $\beta ,f,g,R$.
\end{lemma}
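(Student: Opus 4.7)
The plan is to derive \eqref{naoes} from the spectral gap of the Ruelle operator provided by Theorem \ref{rer1}, with the real work going into making the rate $\epsilon$ and the implicit constant uniform in $t\in[-R,R]$.

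First, fix $t\in[-R,R]$ and regard $\mathcal{L}_{f+\beta tg}$ as a bounded operator on the Banach space of H\"{o}lder functions of some fixed exponent $\alpha$. Theorem \ref{rer1} (in particular its footnote on spectral isolation) supplies a spectral decomposition
\begin{equation*}
\mathcal{L}_{f+\beta tg} = \lambda_{f+\beta tg}\,P_t + N_t,
\end{equation*}
where $P_t\psi := h_{f+\beta tg}\,\nu_{f+\beta tg}(\psi)$ is the rank-one spectral projection onto the leading eigenline, $P_tN_t = N_tP_t = 0$, and the spectral radius $\mathrm{spr}(N_t)=:\rho_t\,\lambda_{f+\beta tg}$ satisfies $\rho_t<1$. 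Iterating and normalising,
\begin{equation*}
\frac{\mathcal{L}_{f+\beta tg}^k(\psi)(x)}{\lambda_{f+\beta tg}^k} - h_{f+\beta tg}(x)\,\nu_{f+\beta tg}(\psi) \;=\; \lambda_{f+\beta tg}^{-k}\,N_t^k(\psi)(x),
\end{equation*}
so proving \eqref{naoes} reduces to controlling $\lambda_{f+\beta tg}^{-k}\bigl\|N_t^k(\psi)\bigr\|_\infty$ uniformly in $t$.

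Next, I would upgrade this per-$t$ gap to a uniform one through continuity and compactness. The map $t\mapsto f+\beta tg$ is affine, hence analytic, into the H\"{o}lder Banach space, and $A\mapsto\mathcal{L}_A$ depends analytically on $A$ in operator norm. Since the leading eigenvalue is simple and isolated for every parameter value, analytic perturbation theory yields that $t\mapsto\lambda_{f+\beta tg}$, $t\mapsto P_t$, $t\mapsto h_{f+\beta tg}$, $t\mapsto\nu_{f+\beta tg}$ are all analytic in $t$, and that the resolvent of $N_t$ on any fixed circle strictly inside the spectral gap is jointly bounded in $t$. Compactness of $[-R,R]$ then gives $\rho_*:=\sup_{t\in[-R,R]}\rho_t<1$ together with a constant $C=C(\beta,f,g,R)$ such that $\|N_t^k\|_{\mathrm{op}}\le C\,(\rho_*\,\lambda_{f+\beta tg})^k$ for all $t\in[-R,R]$ and $k\in\mathbb{N}$. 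Dividing by $\lambda_{f+\beta tg}^k$ gives \eqref{naoes} with $\epsilon := -\log\rho_*>0$, first for H\"{o}lder $\psi$ and then for all $\psi\in C(\Omega)$ by density, using that $\|h_{f+\beta tg}\|_\infty$ and the total mass of $\nu_{f+\beta tg}$ are uniformly bounded on $[-R,R]$.

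The main obstacle is making the prefactor $C$ truly uniform in $t$: Gelfand's formula alone only yields $\|N_t^k\|^{1/k}\to\rho_t\lambda_{f+\beta tg}$ without prefactor control, so one must either invoke the resolvent/Cauchy-integral estimates of analytic perturbation theory sketched above, or argue by a direct contraction principle. A self-contained alternative, which is precisely what is used to obtain the analogous bound (18) in \cite{LeWa1}, is to work with the Birkhoff--Hilbert projective metric on the cone of strictly positive H\"{o}lder functions: the normalised iterate $\lambda_{f+\beta tg}^{-k}\,\mathcal{L}_{f+\beta tg}^k$ is a strict contraction in this metric with rate estimable explicitly from the oscillation of $f+\beta tg$, and joint continuity in $(t,x)$ together with compactness of $[-R,R]$ promotes that into a uniform contraction rate, from which \eqref{naoes} follows. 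I would adopt this Birkhoff-metric route to keep the argument self-contained and to recycle the estimates of \cite{LeWa1} verbatim.
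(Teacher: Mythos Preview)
Your proposal is correct and follows essentially the same strategy as the paper: establish the spectral gap pointwise in $t$ via the Ruelle--Perron--Frobenius theorem, then invoke compactness of $[-R,R]$ together with a continuity property of the gap to obtain a uniform rate $\epsilon>0$. The paper's proof is briefer and simply cites \cite{Henn} for the lower semi-continuity of the spectral gap in the parameter $t$, from which the infimum $\inf_{t\in[-R,R]}\epsilon(\beta,f,g,t)$ is attained and hence strictly positive; you instead obtain the (stronger) continuity through analytic perturbation theory or, alternatively, through the Birkhoff--Hilbert cone contraction argument used for the analogous estimate in \cite{LeWa1}. Your version is more self-contained and, notably, addresses the uniformity of the prefactor $C$ explicitly, a point the paper's proof leaves implicit in the citation to \cite{PP} and \cite{Henn}.
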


\begin{proof}
This is\textbf{\ }a consequence of properties of the spectral gap of the
Ruelle operator (see \cite{PP} and \cite{Henn}).\textbf{\ }Note that the
dependence in $t$ does not appears in \eqref{naoes}.\textbf{\ }The reason is
the following: For each fixed $t$ we have an exponential bound of decay of
the form $e^{-k\,\epsilon (\beta ,f,g,t)}$, but, as the spectral gap is
lower semi-continuous with respect to the parameter $t$, as proved in \cite%
{Henn}, the infimum of $\epsilon (\beta ,f,g,t)$ is attained at some $%
t_{0}\in \lbrack -R,R]$, and the uniformity follows.\textbf{\ }In fact,
later in \eqref{twi00}, \eqref{twi1} and \eqref{tyq}, in an important step
in our proof, we will show that indeed, regarding the large $k$ behavior,
one can consider $t$ only in some suitable finite interval $[-T(\beta
),T(\beta )]$, instead of the whole real line.\textbf{\ }
\end{proof}

Now, starting from \eqref{HS1}, we derive a more convenient expression for
the limit $\mathfrak{m}(\psi )$, as $n\rightarrow \infty $, of the
expectation value $\mathfrak{m}_{n}(\psi )$ in order apply the Laplace
method. At fixed $\beta >0$, using \eqref{super134} and \eqref{naoes}, we
proceed in a similar fashion as in Equation (22) of \cite{LeWa2} to estimate %
\eqref{HS1}: For any $\psi \in C(\Omega )$ and $n\in \mathbb{N}$, 
\begin{eqnarray*}
\mathfrak{m}(\psi ) &=&\lim_{n\rightarrow \infty }\frac{\int_{-\infty
}^{\infty }e^{-\frac{\beta n}{2}t^{2}}\int e^{\beta tng_{n}(x)}\psi (x)\mu
_{f}(\mathrm{d}x)\mathrm{d}t}{\int_{-\infty }^{\infty }e^{-\frac{\beta n}{2}%
t^{2}}\int e^{\beta tng_{n}(x)}\mu _{f}(\mathrm{d}x)\mathrm{d}t} \\
&=&\lim_{n\rightarrow \infty }\frac{\int_{-\infty }^{\infty }e^{-n\frac{%
\beta }{2}t^{2}}\int \mathcal{L}_{f}^{n}(e^{\beta tng_{n}}\psi )(x)\mu _{f}(%
\mathrm{d}x)\mathrm{d}t}{\int_{-\infty }^{\infty }e^{-n\frac{\beta }{2}%
t^{2}}\int \mathcal{L}_{f}^{n}(e^{\beta tng_{n}}1)(x)\mu _{f}(\mathrm{d}x)%
\mathrm{d}t} \\
&=&\lim_{n\rightarrow \infty }\frac{\int_{-\infty }^{\infty }e^{-n\frac{%
\beta }{2}t^{2}+n\log \lambda _{f+\beta tg}}\int \mathcal{L}_{f+\beta
tg}^{n}(\psi )(x)\lambda _{f+\beta tg}^{-n}\mu _{f}(\mathrm{d}x)\mathrm{d}t}{%
\int_{-\infty }^{\infty }e^{-n\frac{\beta }{2}t^{2}+n\log \lambda _{f+\beta
tg}}\int \mathcal{L}_{f+\beta tg}^{n}(1)(x)\lambda _{f+\beta tg}^{-n}\mu
_{f}(\mathrm{d}x)\mathrm{d}t} \\
&=&\lim_{n\rightarrow \infty }\frac{\int_{-\infty }^{\infty }e^{-n\frac{%
\beta }{2}t^{2}+n\log \lambda _{f+\beta tg}}\left[ \nu _{f+\beta tg}\left(
\psi \right) \mu _{f}\left( h_{f+\beta tg}\right) +\mathcal{O}(e^{-n\epsilon
})\right] \mathrm{d}t}{\int_{-\infty }^{\infty }e^{-n\frac{\beta }{2}%
t^{2}+n\log \lambda _{f+\beta tg}}\left[ \nu _{f+\beta tg}\left( 1\right)
\mu _{f}\left( h_{f+\beta tg}\right) +\mathcal{O}(e^{-n\epsilon })\right] 
\mathrm{d}t}.
\end{eqnarray*}%
In the second equality, we used that the equilibrium probability $\mu _{f}$
is the main eigenprobability of the adjoint of the Ruelle operator $\mathcal{%
L}_{f}$ and the corresponding eigenvalue is $1$, for $f$ is a normalized H%
\"{o}lder potential, by assumption, meaning that $\mathcal{L}_{f}^{\ast }\mu
_{f}=\mu _{f}$ (cf. (\ref{feij2})). In the third equality we used that,
directly from the definition (\ref{popoiu1}) of the Ruelle operator and (\ref%
{Eq Birk Avbis}), one has that 
\begin{equation*}
\mathcal{L}_{f}^{n}(e^{\beta tng_{n}}\psi )=\mathcal{L}_{f+\beta
tg}^{n}(\psi )
\end{equation*}%
for any continuous function $\psi \in C(\Omega )$. The fourth equality needs
further explanations: On one hand, by Lemma \ref{tosou}, we can a priori use
the corresponding approximations only for $t$ in compact sets. On the other
hand, by an important observation from Section 2.1 (in particular Lemma 2.2)
of \cite{LeWa1}, considered once more in Section 4.1 of \cite{LeWa2}, we get
the existence of $T(\beta )>0$ such that, at large $n\gg 1$, 
\begin{eqnarray}
&&\int_{-\infty }^{\infty }e^{-n\frac{\beta }{2}t^{2}+n\log \lambda
_{f+\beta tg}}\int \mathcal{L}_{f+\beta tg}^{n}(\psi )(x)\lambda _{f+\beta
tg}^{-n}\mu _{f}(\mathrm{d}x)\mathrm{d}t  \label{twi0} \\
&\sim &\int_{-T(\beta )}^{T(\beta )}e^{-n\frac{\beta }{2}t^{2}+n\log \lambda
_{f+\beta tg}}\int \mathcal{L}_{f+\beta tg}^{n}(\psi )(x)\lambda _{f+\beta
tg}^{-n}\mu _{f}(\mathrm{d}x)\mathrm{d}t  \label{twi00}
\end{eqnarray}%
and, in a similar way, 
\begin{eqnarray}
&&\int_{-\infty }^{\infty }e^{-n\frac{\beta }{2}t^{2}+n\log \lambda
_{f+\beta tg}}\nu _{f+\beta tg}\left( \psi \right) \mu _{f}\left( h_{f+\beta
tg}\right) \mathrm{d}t  \label{twi} \\
&\sim &\int_{-T(\beta )}^{T(\beta )}\,e^{-n\frac{\beta }{2}t^{2}+n\,\log
\lambda _{f+\beta t\,g}}\nu _{f+\beta tg}\left( \psi \right) \mu _{f}\left(
h_{f+\beta tg}\right) \mathrm{d}t.  \label{twi1}
\end{eqnarray}%
That is, the contribution of the integration in $t$ over the set $(-\infty
,-T(\beta ))\cup (T(\beta ),\infty )$ will not interfere in the asymptotic
given by the Laplace method. This property can be obtained by adapting the
argument used in Section 4.1 of \cite{LeWa2}, more specifically, the one
used to get Equation (25) of \cite{LeWa2}. The main issue is that the
contribution of the integration in $t$ in (\ref{twi0}) and \eqref{twi}, over
the set $(-\infty ,-T(\beta ))\cup (T(\beta ),\infty )$, goes to zero, when $%
n\rightarrow \infty $, as 
\begin{equation}
\frac{e^{-n\beta G}}{n}  \label{tyq}
\end{equation}%
for a certain constant $G>0$. It follows that, for any $\psi \in C(\Omega )$%
, \textbf{\ }%
\begin{eqnarray}
\mathfrak{m}(\psi ) &=&\lim_{n\rightarrow \infty }\frac{\int_{-\infty
}^{\infty }e^{-n\frac{\beta }{2}t^{2}+n\log \lambda _{f+\beta tg}}\nu
_{f+\beta tg}\left( \psi \right) \mu _{f}\left( h_{f+\beta tg}\right) 
\mathrm{d}t}{\int_{-\infty }^{\infty }e^{-n\frac{\beta }{2}t^{2}+n\log
\lambda _{f+\beta tg}}\nu _{f+\beta tg}\left( 1\right) \mu _{f}\left(
h_{f+\beta tg}\right) \mathrm{d}t}  \notag \\
&=&\lim_{n\rightarrow \infty }\frac{\int_{-\infty }^{\infty }e^{-n\frac{%
\beta }{2}t^{2}+n\log \lambda _{f+\beta tg}}\nu _{f+\beta tg}\left( \psi
\right) \mu _{f}\left( h_{f+\beta tg}\right) \mathrm{d}t}{\int_{-\infty
}^{\infty }e^{-n\frac{\beta }{2}t^{2}+n\log \lambda _{f+\beta tg}}\mu
_{f}\left( h_{f+\beta tg}\right) \mathrm{d}t}.  \label{nanv}
\end{eqnarray}%
Note that the denominator of \eqref{nanv} does not depend on $\psi $.

For fixed $\beta >0$, a way to handle \eqref{nanv} is to consider that $%
\mathfrak{m}(\psi )$ is the limit as $n\rightarrow \infty $ of the
expectation value of the function 
\begin{equation*}
t\mapsto \nu _{f+\beta tg}\left( \psi \right) =\int \psi \left( x\right) \nu
_{f+\beta tg}\left( \mathrm{d}x\right)
\end{equation*}%
with respect to the following probability densities on $\mathbb{R}$: 
\begin{equation}
\Lambda _{n}(t):=\frac{e^{nv(t)}\mu _{f}\left( h_{f+\beta tg}\right) }{%
\int_{-\infty }^{\infty }e^{nv(s)}\mu _{f}\left( h_{f+\beta sg}\right) 
\mathrm{d}s},\text{ \ \ }n\in \mathbb{N},\ t\in \mathbb{R},  \label{nanv1}
\end{equation}%
with the function $v:\mathbb{R}\rightarrow \mathbb{R}$ defined by%
\begin{equation}
v(t):=-\frac{\beta }{2}t^{2}+\log \lambda _{f+\beta tg}=-\frac{\beta }{2}%
t^{2}+P(f+\beta tg)=P_{\beta ,g}(t),\text{ \ \ }t\in \mathbb{R}.
\label{rewt}
\end{equation}%
Note from (\ref{joq2}) that this function is nothing but the Bogoliubov
approximating pressure $P_{\beta ,g}$. With this formulation, it becomes
clear how the Laplace method can be used to estimate, as $n\rightarrow
\infty $, the integrals 
\begin{equation}
\int_{-\infty }^{\infty }e^{nv(t)}\mu _{f}\left( h_{f+\beta tg}\right) 
\mathrm{d}t\quad \text{and}\quad \int_{-\infty }^{\infty }e^{nv(t)}\nu
_{f+\beta tg}\left( \psi \right) \mu _{f}\left( h_{f+\beta tg}\right) 
\mathrm{d}t  \label{twi5}
\end{equation}%
by analyzing the critical points of the function $v$ given by (\ref{rewt}).
Nevertheless, we need to control integrals over the whole real line $\mathbb{%
R}$.

Using the same arguments given above to restrict the integrals (\ref{twi0})
and \eqref{twi} on the compact set $[-T(\beta ),T(\beta )]$, we can restrict
without loss of generality the integrals in (\ref{twi5}) on the same
interval $[-T(\beta ),T(\beta )]$ for sufficiently large $n\gg 1$. In other
words, the contribution of the integration in $t$ over the set $(-\infty
,-T(\beta ))\cup (T(\beta ),\infty )$ will not interfere in the asymptotic
given by the Laplace method to be used next. Indeed, note that in \cite%
{LeWa2}, using Equation (19) in \cite{LeWa2} (that follows from Lemma 3.1 in 
\cite{LeWa2}), the authors show this property, and here, using a similar
reasoning, this property follows from Lemma \ref{qte}. When estimating the
asymptotic of the right-hand side of \eqref{nanv}, the error term of the
form (\ref{tyq}) has to be used in the numerator but also in the denominator.

As already mentioned, for the analysis of the asymptotics of \eqref{nanv},
in particular expressions like \eqref{twi5}, the Laplace method requires
analyzing the critical points of the function $v$ given by (\ref{rewt}).
Note that the term $\log \lambda _{f+\beta tg}$ in (\ref{rewt}) does not
grow faster then linearly in $t$ (see Lemma \ref{qte}). We point out that
when $f\neq -\log 2$, the eigenvalues $\log _{f+\beta t_{1}g}$ and $\log
_{f+\beta t_{2}g}$ may be different, where $t_{1},t_{2}$ are the solutions
to the self-consistency equation (\ref{pirt}), that is, 
\begin{equation}
t=\mu _{f+\beta tg}\left( g\right) =\int g\left( x\right) \mu _{f+\beta
tg}\left( \mathrm{d}x\right)  \label{bel2222}
\end{equation}%
for the H\"{o}lder function $g$ of Equation \eqref{binc2xx}.

As already said, we can restrict without loss of generality the integrals in
(\ref{twi5}) on the same interval $[-T(\beta ),T(\beta )]$ for sufficiently
large $n\gg 1$ and, in this interval, it is possible to have more than one
critical point of the function $v$, but only a finite number of them by
analyticity of the function $v$. For each single critical point, we select
an interval $[r,s]$ containing only that critical point. We then apply the
Laplace method to each one of these intervals. The contribution of the
integration on $t$ on the complement of the union of those intervals $[r,s]$
is negligible, by the Laplace method. Later, we have to consider the
different asymptotic contributions associated with each critical point to
get the final estimate. In fact, the global maxima of the function $v$
determines the asymptotics of the entire integral. In Theorem \ref{otwr} we
consider a particular example, which is described in detail in Section \ref%
{mae}, with $f=-\log 2$.

Assume that the interval $[r,s]\subseteq \lbrack -T(\beta ),T(\beta )]$
contains a unique critical point $t_{0}\in \lbrack r,s]$ of the function $v$
(\ref{rewt}). In other words, $t_{0}$ in the unique point of the interval $%
[r,s]$ such that $v^{\prime }(t_{0})=0$. Note that in the general case, such
a point is a solution to a self-consistency equation, as explained in
Sections \ref{qua} and \ref{mae}. Assume, moreover, that $v^{\prime \prime
}(t_{0})<0$, i.e., $t_{0}$ refers to a local maximum of the function $v$. By
Morse's lemma, the local maximum is then isolated. Note that the second
derivative of $v$ equals 
\begin{eqnarray}
v^{\prime \prime }(t_{0}) &=&-\beta +\frac{d^{2}}{dt^{2}}P(f+\beta
tg)|_{t=t_{0}}  \label{the7} \\
&=&-\beta +\text{asymptotic variance of}\,\beta g\,\text{w.r.t.}\,\mu
_{f+\beta t_{0}g}.  \label{nico7x}
\end{eqnarray}%
(see Proposition 4.12 in \cite{PP}). This second derivative can be
explicitly computed in some cases. See Section \ref{Examples}, in particular %
\eqref{dePa} and \eqref{dePax} when $g$ is of the form \eqref{binc2xx}.

It is instructive here to take the example $f=-\log d$ to understand the
connection between this critical point $t_{0}$ and a quadratic equilibrium
probability, as described for instance in Section \ref{qua}. Observe indeed
that 
\begin{equation*}
\mu _{\beta t_{0}g}=\mu _{-\log d+\beta t_{0}g}\text{ \ \ and \ \ }P(-\log
d+\beta t_{0}g)=-\log d+P(\beta t_{0}g).
\end{equation*}%
In fact, for $f=-\log d$, the equation $v^{\prime }(t_{0})=0$ can be
rewritten as%
\begin{equation}
t_{0}=\mu _{-\log d+\beta t_{0}g}\left( g\right) =\mu _{\beta t_{0}g}\left(
g\right) ,  \label{sosu}
\end{equation}%
which is nothing but (\ref{bel2222}) for $t=t_{0}$. (Remember also the
condition \eqref{outr} of Section \ref{qua}.) For the constant function $%
f=-\log d$, it follows from (\ref{rewt}) that 
\begin{eqnarray}
v(t_{0}) &=&-\frac{\beta }{2}\mu _{f+\beta t_{0}g}\left( g\right)
^{2}+P(\beta t_{0}g)-\log d  \notag \\
&=&-\frac{\beta }{2}\mu _{\beta t_{0}g}\left( g\right) ^{2}+h(\mu _{\beta
t_{0}g})+\beta t_{0}\mu _{\beta t_{0}g}\left( g\right) -\log d  \notag \\
&=&\frac{\beta }{2}\mu _{\beta t_{0}g}\left( g\right) ^{2}+h(\mu _{\beta
t_{0}g})-\log d.  \label{nico6}
\end{eqnarray}%
Recall from (\ref{joq2}) and (\ref{rewt}) that $v=P_{\beta ,g}$. Therefore,
if $t_{0}$ is not only a local maximum but a global maximum of the function $%
v$, then we infer from Theorem \ref{lgre} and Equation (\ref{nico6}) that 
\begin{equation}
\frac{\beta }{2}\mu _{\beta t_{0}g}\left( g\right) ^{2}+h(\mu _{\beta
t_{0}g})=\sup_{\rho \in \mathcal{P}(T)}\left\{ \frac{\beta }{2}\rho \left(
g\right) ^{2}+h(\rho )\right\} =\mathfrak{P}_{2,\beta }(g)+\log d.
\label{nico7}
\end{equation}%
In other words, the linear equilibrium probability $\mu _{\beta t_{0}g}$
solves the above variational problem. In the particular example of Theorem %
\ref{otwr}, we assume that $f=-\log 2$ and the alphabet has only two
elements (i.e., $d=2$). See also Section \ref{mae}. In this case, we get two
self-consistent points $t_{1}$ and $t_{2}=-t_{1}$ for the potential %
\eqref{binc2x} and it thus follows from \eqref{nico7} that $%
v(t_{1})=v(t_{2}) $.

This observation shows that there exists a natural link between the critical
parameter $t_{0}$ for the Laplace method (which is associated to the
mean-field Gibbs probability) and the self-consistent parameter that is
associated with the quadratic equilibrium probability.

Now we are in a position to apply the Laplace method for analyzing the
asymptotic limit of the numerator of \eqref{nanv}, as already observed in 
\cite{LeWa1}: Recall that $[r,s]\subseteq \lbrack -T(\beta ),T(\beta )]$ is
assumed to contain a unique critical point $t_{0}\in \lbrack r,s]$ of the
function $v$ (\ref{rewt}). Then, by the Laplace method (see Section 5.1 of 
\cite{AE}), for any continuous function $\xi :[r,s]\rightarrow \mathbb{R}$,
in the limit $n\rightarrow \infty $, 
\begin{equation}
\int_{r}^{s}e^{nv(t)}\xi (t)\mathrm{d}t\sim \sqrt{\frac{2\pi }{n|v^{\prime
\prime }(t_{0})|}}e^{nv(t_{0})}\xi (t_{0})  \label{the1}
\end{equation}%
(cf. Equation (5.1.9) in \cite{AE}). In particular, in the limit $%
n\rightarrow \infty $, 
\begin{equation}
\int_{r}^{s}e^{nv(t)}\mu _{f}\left( h_{f+\beta tg}\right) \nu _{f+\beta
tg}\left( \psi \right) \mathrm{d}t\sim \sqrt{\frac{2\pi }{n|v^{\prime \prime
}(t_{0})|}}e^{nv(t_{0})}\mu _{f}\left( h_{f+\beta t_{0}g}\right) \nu
_{f+\beta t_{0}g}\left( \psi \right) ,  \label{the33}
\end{equation}%
which gives the asymptotics of the numerator of \eqref{nanv} for any
continuous function $\psi \in C(\Omega )$, and for two H\"{o}lder potentials 
$f$ (normalized) and $g$. In the same way as before we also get that%
\begin{equation}
\int_{r}^{s}e^{nv(t)}\mu _{f}\left( h_{f+\beta tg}\right) \mathrm{d}t\sim 
\sqrt{\frac{2\pi }{n|v^{\prime \prime }(t_{0})|}}e^{nv(t_{0})}\mu _{f}\left(
h_{f+\beta t_{0}g}\right)   \label{the2}
\end{equation}%
for the denominator of \eqref{nanv}. It is important to observe at this
point that the eigenfunction $h_{f+\beta t_{0}g}$ is strictly positive.
Then, it is necessary to analyze each term of the right-hand side of %
\eqref{the33} and (\ref{the2}). In particular, the contribution of the
asymptotic variance of the potential $f+\beta t_{0}g$, which is the second
derivative of $v(t)$ at the critical point $t_{0}$ (see (\ref{nico7x})), is
of great importance.

Clearly, the above arguments can be applied to the more general case of a
finite number of critical points $t_{j}\in \lbrack -T(\beta ),T(\beta
)]\subseteq \mathbb{R}$, $j\in \{1,2,\ldots ,q\}$, of the function $v$ and
the leading term will be given by the finite subset of global maximizers of $%
v$. Recall here that, as our particular $g$ is assumed to be H\"{o}lder, the
pressure function $P(f+\beta tg)$ and so the function $v$ are real analytic
in $t$, which in turn implies the existence of only a finite number of
critical points.

For a generic potential $g$ (i.e., not necessarily of the form %
\eqref{binc2xx}) it is natural to expect the existence of a unique maximizer 
$t_{0}$ of $v$. In this case, it produces the maximum asymptotic grow 
\begin{equation}
\sqrt{\frac{2\pi }{n|v^{\prime \prime }(t_{0})|}}e^{nv(t_{0})}\mu _{f}\left(
h_{f+\beta t_{0}g}\right) \nu _{f+\beta t_{0}g}\left( \psi \right) \quad 
\text{and}\quad \sqrt{\frac{2\pi }{n|v^{\prime \prime }(t_{0})|}}%
e^{nv(t_{0})}\mu _{f}\left( h_{f+\beta t_{0}g}\right)  \label{the2q}
\end{equation}%
respectively for the numerator and denominator of \eqref{nanv}, leading from %
\eqref{nanv} to 
\begin{equation}
\mathfrak{m}(\psi )=\lim_{n\rightarrow \infty }\nu _{f+\beta t_{0}g}\left(
\psi \right) ,\qquad \psi \in C(\Omega ).  \notag
\end{equation}%
In other words, a unique maximizer $t_{0}$ of $v$ dominates the contribution
of the other possible critical points and there is no mean-field Gibbs phase
transition (Definition \ref{gogrexx}).

\begin{remark}
\label{klyt}This happens, for instance, for the potential of Section \ref%
{boex}, given by (\ref{go}), and $\beta =0.6$ (see Figure \ref{fign4}).
Indeed, the relevant function $\varphi $ here is the one given by Equation (%
\ref{cory}), which is nothing but $\varphi =v$ (see (\ref{rewt})). Its graph
is plotted in green, and it can be seen that there are two different
critical points for $\varphi =v$, at which $\varphi $ takes different
values. The corresponding self-consistent parameter is equal to $3$.
Moreover, the second derivative of $\varphi =v$, plotted in red, which is
related to the asymptotic variance, is not the same in the different
critical points, even though their values are close to each other. In this
case, the asymptotic is dominated by a single critical point, which is the
point $t_{0}=3$, and, for the limit of the quotient (\ref{xiu}) (or (\ref%
{HS1})), we get 
\begin{equation}
\mathfrak{m}\left( \psi \right) =\nu _{-\log 2+3\beta g}\left( \psi \right)
,\qquad \psi \in C(\Omega ),  \label{xixu}
\end{equation}%
and there is no phase transition in the sense of Definition \ref{gogrexx}.
\end{remark}

However, in the example given by Remark \ref{poiu} for $d=2$, $f=-\log 2$
and $g=A:\{0,1\}^{\mathbb{N}}\rightarrow \mathbb{R}$ defined by (\ref{binc2x}%
) or \eqref{binc2xx}, recall the existence of two self-consistent points $%
t_{1}$ and $t_{2}=-t_{1}$, but this is not always the case when $f\neq -\log
d$. Moreover, because of the symmetry of $g$, one can show that $P(\beta
t_{1}g)=P(-\beta t_{1}g)$ for the case $f=-\log d$ (see, e.g., Remark \ref%
{poiu}) Therefore, in this case, $v(t_{1})=v(t_{2})$ (see (\ref{rewt})) and
the contributions of the terms 
\begin{equation}
e^{nv(t_{1})}\,\,\ \text{and}\,\ \,e^{nv(t_{2})}  \label{the21}
\end{equation}%
are the same in the case $f=-\log d$.

\begin{remark}
\label{enfw} If $t_{1}$ and $t_{2}$\ are the corresponding self-consistent
constants, that is, the stationary or critical points of $v$, then, for a
general (normalized) H\"{o}lder function $f$, $v(t_{1})$ may be different
from $v(t_{2})$, even in the example where we take $g$ given by (\ref{binc2x}%
) or (\ref{binc2xx}). In this situation, only one of the two terms of (\ref%
{the21}) are relevant for the asymptotics, as already explained above.
\end{remark}

Moreover, we are also able to estimate the second derivatives $v^{\prime
\prime }(t_{1})$ and $v^{\prime \prime }(t_{2})$. For example, from %
\eqref{dePax} we get that 
\begin{equation}
v^{\prime \prime }(t_{1})=v^{\prime \prime }(-t_{1})=v^{\prime \prime
}(t_{2}).  \label{the8}
\end{equation}%
This accounts for the term $\sqrt{2\pi /|v^{\prime \prime }(t_{0})|}$ in %
\eqref{the33}. For any $j\in \{1,2\}$ and $f=-\log 2$, $h_{f+\beta t_{j}g}$
is obtained in explicit terms from \eqref{eei} with $t=1$ and $A=f+\beta
t_{j}g$. In particular, $h_{f+\beta t_{1}g}(x)=$ $h_{f-\beta t_{1}g}(x)^{-1}$
and thus, the expectation values 
\begin{equation*}
\mu \left( h_{f+\beta t_{1}g}\right) \qquad \text{and}\qquad \mu \left(
h_{f+\beta t_{2}g}\right) =\mu \left( h_{f-\beta t_{1}g}(x)\right)
\end{equation*}%
may be different. In fact, it is possible to get their exact values by using
again \eqref{eei}. Moreover, by (\ref{hgfr})--(\ref{viu1}), the terms $\nu
_{f+\beta t_{1}g}\left( \psi \right) $ and $\nu _{f+\beta t_{2}g}\left( \psi
\right) $ may also be different from each other, and one can get their exact
values. In this way, we will get that the two probabilities (which are
generally not $T$-invariant) that appear in Theorem \ref{otwr7} are indeed
different from each other, and one can thus give an explicit example of
nonlinear phase transition.

In conclusion, the asymptotics of both the numerator and the denominator of %
\eqref{nanv} can be explicitly written in the example given by Remark \ref%
{poiu}. In this case, according to \eqref{the21} and \eqref{the8}, the two
critical points $t_{1}$ and $t_{2}=-t_{1}$ are global maximizers of the
function $v$ and%
\begin{equation}
\sqrt{\frac{2\pi }{|v^{\prime \prime }(t_{1})|}}e^{nv(t_{1})}=\sqrt{\frac{%
2\pi }{|v^{\prime \prime }(t_{2})|}}e^{nv(t_{2})}.  \label{the21x}
\end{equation}%
Therefore, in this case, by estimating the asymptotics of the quotient %
\eqref{xiu} (or \eqref{HS1}) via the Laplace method, we get from (\ref{the33}%
) and (\ref{the2}) that, for any $\psi \in C(\Omega )$, 
\begin{equation}
\mathfrak{m}\left( \psi \right) =\frac{\mu \left( h_{f+\beta t_{1}g}\right)
\nu _{f+\beta t_{1}g}\left( \psi \right) +\mu \left( h_{f+\beta
t_{2}g}\right) \nu _{f+\beta t_{2}g}\left( \psi \right) }{\mu \left(
h_{f+\beta t_{1}g}\right) +\mu \left( h_{f+\beta t_{2}g}\right) }
\label{xixu4}
\end{equation}%
and there is a (binary) mean-field Gibbs phase transition in the sense of
Definition \ref{gogrexx}. An interesting fact here is that the potentials $%
-\log d+\beta t_{1}g$ and $-\log d+\beta t_{2}g$, with the constants $t_{1}$
and $t_{2}$ being self-consistent, play the main role for both the nonlinear
pressure problem (producing equilibrium measures) and the canonical Gibbs
setting (producing eigenprobabilities), as already mentioned in \cite{LeWa1}.

Again, the above arguments with only two self-consistent constants $t_{1}$
and $t_{2}$ can be generalized to the more general case of $q\in \mathbb{N}$
global maximizers of $v$, leading in this case to a generalization Theorem %
\ref{otwr7} with $\mathfrak{m}$ being a non-trivial convex combination of $q$
different eigenprobabilities. Notice finally that the limit probability $%
\mathfrak{m}$ is not necessarily $T$-invariant.

\section{Quadratic mean-field equilibrium probabilities}

\label{QLVE}

In this section, we will prove Theorem \ref{kde} together with Corollary \ref%
{kde coro}, which refer again to the quadratic case, but we point out that
our arguments can be adapted for more general nonlinear pressures. In fact,
the quadratic function can be easily replaced with a general convex or
concave function, or even with a sum of both types of functions. Notice that
in \cite{BPL}, we consider nonlinear pressures from a purely abstract
perspective and with great generality. Here, our aim is rather to illustrate
important aspects of nonlinear phase transitions using explicit examples, an
aim that the quadratic case fulfils optimally.

Let $\mu $ be the maximum entropy measure and $g:\Omega \rightarrow \mathbb{R%
}$ any fixed H\"{o}lder potential. For some fixed $\beta >0$ and all $n\in 
\mathbb{N}$, define the probability measure $\mathfrak{M}^{(n)}$ on $\Omega $
by (\ref{q1a1}), that is, 
\begin{equation}
\mathfrak{M}^{(n)}(\psi )=\mathfrak{M}_{g,\beta }^{(n)}(\psi ):=\frac{\mu
\left( \psi _{n}e^{\frac{\beta n}{2}g_{n}^{2}}\right) }{\mu \left( e^{\frac{%
\beta n}{2}g_{n}^{2}}\right) }=\frac{\int \psi _{n}\left( x\right) e^{\frac{%
\beta n}{2}g_{n}\left( x\right) ^{2}}\mu (\mathrm{d}x)}{\int e^{\frac{\beta n%
}{2}g_{n}\left( x\right) ^{2}}\mu (\mathrm{d}x)}  \label{q1}
\end{equation}%
for any continuous (real-valued) function $\psi \in C(\Omega )$, where, for
any $\varphi \in C(\Omega )$, we recall again that $\varphi _{n}$, $n\in 
\mathbb{N}$, are the so-called Birkhoff averages defined by Equation (\ref%
{Eq Birk Av}) (or (\ref{Eq Birk Avbis})). Recall also Definition \ref{dedeus}%
: Any probability $\mathfrak{M}^{\infty }=\mathfrak{M}_{g,\beta }^{\infty }$%
, which is the weak$^{\ast }$ limit of a convergent subsequence $\mathfrak{M}%
^{(n_{k})}$, $k\rightarrow \infty $, is called here a \textit{quadratic
mean-field equilibrium probability} for the pair $g,\beta $.

We will show Theorem \ref{kde} together with Corollary \ref{kde coro}, which
refer to the following assertion:

\begin{theorem}
\label{sdsdfsdf}Given a H\"{o}lder potential $g:\Omega \rightarrow \mathbb{R}
$, any quadratic mean-field equilibrium probability is $T$-invariant and
lies in the closed convex hull of the quadratic equilibrium probabilities
for $g$. In particular, if there is a non-ergodic mean-field equilibrium
probability, then the quadratic equilibrium probabilities for $g$ is
non-unique, i.e., a nonlinear phase transition takes place.
\end{theorem}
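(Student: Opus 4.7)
My plan is to exhibit $\mathfrak{M}^\infty$ as the barycenter of a probability measure concentrated on the set $\mathcal{E}$ of quadratic equilibrium probabilities for $g$, via a tilted level-2 large deviation principle. The starting point is the tautology $\psi_n(x) = R_n(x)(\psi)$, where $R_n(x) := n^{-1}\sum_{j=0}^{n-1}\delta_{T^j x}$ is the empirical measure at $x$; together with the identity $g_n(x) = R_n(x)(g)$ in the exponent, this gives, for $\hat{\mu}_n := Z_n^{-1}e^{(\beta n/2)g_n^2}\mu$,
\begin{equation*}
\mathfrak{M}^{(n)}(\psi) \;=\; \int R_n(x)(\psi)\,d\hat{\mu}_n(x),
\end{equation*}
exhibiting $\mathfrak{M}^{(n)}$ as the barycenter of the Borel probability $\nu_n := (R_n)_\ast \hat{\mu}_n$ on the weak$^\ast$ compact space $\mathcal{P}(\Omega)$. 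The asymptotic $T$-invariance of $\mathfrak{M}^\infty$ follows at once from this symmetrization, since
\begin{equation*}
\bigl|\mathfrak{M}^{(n)}(\psi\circ T) - \mathfrak{M}^{(n)}(\psi)\bigr| \;=\; \bigl|\hat{\mu}_n\bigl(n^{-1}(\psi\circ T^n - \psi)\bigr)\bigr| \;\le\; \tfrac{2}{n}\|\psi\|_\infty.
\end{equation*}

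Next, I would invoke the level-2 LDP cited in Remark \ref{mmx}: under $\mu$, the law of $R_n$ satisfies a weak$^\ast$ LDP on $\mathcal{P}(\Omega)$ with good rate function $J(\rho) = \log d - h(\rho)$ on $\mathcal{P}(T)$ and $+\infty$ elsewhere. Because $g\in C(\Omega)$ and $F(x) = \beta x^2/2$ is continuous, the tilt $\rho \mapsto F(\rho(g))$ is weak$^\ast$ continuous and uniformly bounded on $\mathcal{P}(\Omega)$. Varadhan's lemma and the standard tilting formula then yield an LDP for $\nu_n$ with good rate function
\begin{equation*}
J_g(\rho) \;=\; J(\rho) - F(\rho(g)) + \mathfrak{P}_{2,\beta}(g) \;=\; \mathfrak{g}^+(\rho) + \mathfrak{P}_{2,\beta}(g) \;\ge\; 0,
\end{equation*}
whose zero set is precisely the set $\mathcal{E}$ of minimizers of $\mathfrak{g}^+$ defined in \eqref{farobis} (see also \eqref{impai}). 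Goodness of $J_g$ and compactness of $\mathcal{E}$ imply exponential decay of $\nu_n(U^c)$ for every open neighborhood $U \supseteq \mathcal{E}$. By tightness, any subsequence of $\{\nu_n\}$ admits a further subsequence converging weakly to some $\nu_\infty$ supported on $\mathcal{E}$; weak$^\ast$ continuity of the barycenter map (each $\rho \mapsto \rho(\psi)$ being continuous on $\mathcal{P}(\Omega)$) then gives
\begin{equation*}
\mathfrak{M}^\infty \;=\; \int_{\mathcal{E}}\rho\,d\nu_\infty(\rho) \;\in\; \overline{\mathrm{conv}}(\mathcal{E}),
\end{equation*}
proving Theorem \ref{kde}.

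For Corollary \ref{kde coro}, I would argue by contradiction: if $\mathcal{E} = \{\rho_0\}$, then necessarily $\mathfrak{M}^\infty = \rho_0$, and the hypothesis forces $\rho_0$ to be non-ergodic. Any non-trivial ergodic decomposition $\rho_0 = \alpha \rho_1 + (1-\alpha)\rho_2$ with $\rho_1 \neq \rho_2$ yields, using the affineness of $h$ and the identity $(\alpha a + (1-\alpha)b)^2 = \alpha a^2 + (1-\alpha)b^2 - \alpha(1-\alpha)(a-b)^2$,
\begin{equation*}
\mathfrak{g}^+(\rho_0) \;=\; \alpha\mathfrak{g}^+(\rho_1) + (1-\alpha)\mathfrak{g}^+(\rho_2) + \tfrac{\beta}{2}\alpha(1-\alpha)\bigl[\rho_1(g) - \rho_2(g)\bigr]^2.
\end{equation*}
Combined with the minimizing inequalities $\mathfrak{g}^+(\rho_0) \leq \mathfrak{g}^+(\rho_i)$, this forces $\rho_1, \rho_2 \in \mathcal{E}$, contradicting $|\mathcal{E}| = 1$.

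The principal obstacle is the rigorous promotion of the level-1 LDP \eqref{aret23x} to the level-2 LDP on $\mathcal{P}(\Omega)$ under $\mu$ and its subsequent tilt by the continuous, bounded functional $\rho \mapsto F(\rho(g))$; this is classical in symbolic dynamics and underlies Remark \ref{mmx}, but the identification of the zero set of $J_g$ with $\mathcal{E}$ also uses the variational characterization $\hat{P}(F) = \sup_{\rho \in \mathcal{P}(T)}\{F(\rho(g)) + h(\rho) - \log d\} = \mathfrak{P}_{2,\beta}(g)$ established in Section \ref{Bogo}. Once these ingredients are in place, the remaining steps (tightness, continuity of the barycenter, and the convex-combinatorial computation above) are soft.
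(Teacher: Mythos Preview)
Your argument is correct and takes a genuinely different route from the paper's own proof. The paper never invokes the level-2 LDP or the barycenter representation of $\mathfrak{M}^{(n)}$; instead it proceeds by convex analysis. It introduces finite-volume pressures $p^{(n)}(\psi)=\tfrac{1}{n}\ln\mu\bigl(e^{n(\frac{\beta}{2}g_n^2+\psi_n)}\bigr)$, observes that $\mathfrak{M}^{(n)}$ is the unique tangent functional to $p^{(n)}$ at $0$, and passes to the limit to show that any accumulation point $\mathfrak{M}^{(\infty)}$ is tangent to $p^{(\infty)}$ at $0$. By Legendre--Fenchel duality this makes $\mathfrak{M}^{(\infty)}$ a minimizer of the \emph{affine} mean-field free energy $\mathfrak{f}^+$ of Section~\ref{Bog}, and the paper then appeals to Theorem~\ref{erte} together with an external $\Gamma$-regularization result (Theorem~1.4 of \cite{BP3}) to conclude that minimizers of $\mathfrak{f}^+$ lie in the closed convex hull of minimizers of $\mathfrak{g}^+$. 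The corollary is handled by noting that the minimizing set of the affine $\mathfrak{f}^+$ is a face of $\mathcal{P}(T)$.

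Your approach bypasses $\mathfrak{f}^+$ and the $\Gamma$-regularization machinery entirely: the tilted level-2 LDP lands you directly on the zero set $\mathcal{E}$ of $\mathfrak{g}^+$, and the barycenter argument is elementary. This is more self-contained and probabilistically transparent; the paper's route, on the other hand, highlights the role of the affine functional $\mathfrak{f}^+$ (which it motivates independently in Section~\ref{Bog}) and is set up to generalize beyond the quadratic case via \cite{BPL}. Your corollary argument via the explicit identity for $\mathfrak{g}^+$ under convex combinations is also different from, and more hands-on than, the paper's face argument; note that for the decomposition $\rho_0=\alpha\rho_1+(1-\alpha)\rho_2$ you only need $\rho_1,\rho_2\in\mathcal{P}(T)$ distinct (which follows from non-extremality), not that they be ergodic.
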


Recall that a quadratic equilibrium probability for $g$ is a linear
equilibrium probability for a potential of the form $\beta tg$, where $t\in 
\mathbb{R}$ satisfies a self-consistency condition. See Section \ref{Bogo}
for more details, in particular Section \ref{qua} for the particular case of
the convex function $F(x)=\beta x^{2}/2$ (with $\beta >0$) analyzed here.

In order to prove the above theorem, we need some preliminary results. As
before, $\Omega =\{1,2,\ldots ,d\}^{\mathbb{N}}$ for general $d\in \mathbb{N}
$ and the shift operator is denoted by $T:\Omega \rightarrow \Omega $.
Recall once again that $\mu $ denotes the maximal entropy probability, i.e.,
the equilibrium probability for the constant potential $A=-\log d$.

For all $n\in \mathbb{N}$, define the finite-volume (quadratic) pressure%
\begin{equation}
p^{(n)}(\psi ):=\frac{1}{n}\ln \mu \left( e^{n\left( \frac{\beta }{2}%
g_{n}^{2}+\psi _{n}\right) }\right) ,\text{ \ \ }\psi \in C(\Omega ).
\label{q3}
\end{equation}%
In particular, 
\begin{equation}
p^{(n)}(0)=\frac{1}{n}\ln \mu \left( e^{n\left( \frac{\beta }{2}%
g_{n}^{2}+\psi _{n}\right) }\right) .  \label{q32}
\end{equation}%
It defines a continuous convex mapping $\psi \mapsto p^{(n)}(\psi )$ from $%
C(\Omega )$\ to $\mathbb{R}$. In particular, there is at least one
continuous tangent functional to $p^{(n)}:C(\Omega )\rightarrow \mathbb{R}$,
at any $\psi \in C(\Omega )$. Clearly, for all $\psi \in C(\Omega )$ and $%
n\in \mathbb{N}$,%
\begin{equation}
\left. \frac{d}{d\alpha }p^{(n)}(\alpha \psi )\right\vert _{\alpha =0}=%
\mathfrak{M}^{(n)}(\psi ).  \label{q4}
\end{equation}%
In other words, the above defined probability measure $\mathfrak{M}^{(n)}$
is the unique continuous functional that is tangent to $p^{(n)}(\cdot )$ at $%
0$.

Recall meanwhile that the functional $\mathfrak{f}^{+}:\mathcal{P}%
(T)\rightarrow \mathbb{R}$, defined by \eqref{faro}, satisfies in the convex
case 
\begin{equation}
\mathfrak{f}^{+}(\rho )=-\left( \beta \Delta _{g}(\rho )+h(\rho )-\log
d\right) ,\text{ \ \ }\rho \in \mathcal{P}(T),  \label{q5}
\end{equation}%
with $h:\mathcal{P}(T)\rightarrow \mathbb{R}$ being the affine and weak$%
^{\ast }$ upper semi-continuous functional defined via the
(Kolmogorov-Sinai) entropy, while $\Delta _{A}:\mathcal{P}(T)\rightarrow 
\mathbb{R}$ is the affine and weak$^{\ast }$ upper semi-continuous
functional of Equations (\ref{Delta0})--(\ref{Delta1}), i.e.,%
\begin{equation*}
\Delta _{g}(\rho ):=\lim_{n\rightarrow \infty }\frac{1}{2}\int g_{n}\left(
x\right) ^{2}\rho \left( \mathrm{d}x\right) =\inf_{n\in \mathbb{N}}\frac{1}{2%
}\int g_{n}\left( x\right) ^{2}\rho \left( \mathrm{d}x\right) ,\text{ \ \ }%
\rho \in \mathcal{P}(T).
\end{equation*}%
From Theorem \ref{erte}, 
\begin{equation}
\inf \mathfrak{f}^{+}(\mathcal{P}(T))=-\sup_{\rho \in \mathcal{P}(T)}\left\{
\,\frac{\beta }{2}\rho (g)^{2}+h(\rho )-\log d\right\} .  \label{jer}
\end{equation}%
Since $\mathfrak{f}^{+}$ is lower semicontinuous with respect to the weak$%
^{\ast }$ topology, it has minimizers. As proven above, the (not necessarily
unique) $T$-invariant probability measure at which the minimum value of $%
\mathfrak{f}^{+}$ is attained is a linear equilibrium probability for the
potential $\beta tg$, where $t$ satisfies the self-consistency condition as
given in Section \ref{qua}.

By Equations (\ref{impai}) and (\ref{foiq98}), we have that 
\begin{equation}
\lim_{n\rightarrow \infty }p^{(n)}(0)=-\inf_{\rho \in \mathcal{P}(T)}%
\mathfrak{f}^{+}(\rho )=\sup_{\rho \in \mathcal{P}(T)}\left\{ \,\frac{\beta 
}{2}\rho (g)^{2}+h(\rho )-\log d\right\} .  \label{q6}
\end{equation}%
More generally, one proves that, for all $\psi \in C(\Omega )$,%
\begin{equation}
p^{(\infty )}(\psi ):=\lim_{n\rightarrow \infty }p^{(n)}(\psi )=-\inf_{\rho
\in \mathcal{P}(T)}(\mathfrak{f}^{+}(\rho )-\rho (\psi )).  \label{q8}
\end{equation}%
Hence, $p^{(\infty )}$ defines a continuous convex mapping $\psi \mapsto
p^{(\infty )}(\psi )$ from $C(\Omega )$\ to $\mathbb{R}$. In fact, the last
equation says that $p^{(\infty )}$ and $\mathfrak{f}^{+}$ are related to
each other by the Legendre-Fenchel transform.

We give now a preliminary assertion on weak$^{\ast }$ accumulation point of
probability measures $\mathfrak{M}^{(n)}$, $n\in \mathbb{N}$.

\begin{lemma}
Any weak$^{\ast }$ accumulation point $\mathfrak{M}^{(\infty )}$ of the
sequence $\mathfrak{M}^{(n)}$, $n\in \mathbb{N}$, of probability measures is
necessarily an element of $\mathcal{P}(T)$, i.e., an invariant probability
measure.
\end{lemma}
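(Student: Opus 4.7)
The plan is to reduce $T$-invariance to the elementary estimate that Birkhoff averages of a continuous function $\psi$ and of $\psi\circ T$ differ by $\mathcal{O}(1/n)$ in sup norm, independently of the weight used in (\ref{q1}). Concretely, for any $\psi\in C(\Omega)$, writing $\phi=\psi\circ T$, one has the telescoping identity
\begin{equation*}
\phi_{n}(x)-\psi_{n}(x)=\frac{1}{n}\bigl(\psi(T^{n}x)-\psi(x)\bigr),
\end{equation*}
so that $\Vert \phi_{n}-\psi_{n}\Vert_{\infty}\leq 2\Vert\psi\Vert_{\infty}/n$.

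First, I would plug this into the definition (\ref{q1}) of $\mathfrak{M}^{(n)}$. Since the weight $e^{\frac{\beta n}{2}g_{n}^{2}}/\mu(e^{\frac{\beta n}{2}g_{n}^{2}})$ is non-negative and integrates to $1$ against $\mu$, we obtain
\begin{equation*}
\bigl|\mathfrak{M}^{(n)}(\psi\circ T)-\mathfrak{M}^{(n)}(\psi)\bigr|
=\left|\frac{\int (\phi_{n}(x)-\psi_{n}(x))\,e^{\frac{\beta n}{2}g_{n}(x)^{2}}\mu(\mathrm{d}x)}{\int e^{\frac{\beta n}{2}g_{n}(x)^{2}}\mu(\mathrm{d}x)}\right|
\leq \Vert \phi_{n}-\psi_{n}\Vert_{\infty}\leq \frac{2\Vert\psi\Vert_{\infty}}{n}.
\end{equation*}
In particular, $\mathfrak{M}^{(n)}(\psi\circ T)-\mathfrak{M}^{(n)}(\psi)\to 0$ as $n\to\infty$, with a bound that depends only on $\Vert\psi\Vert_{\infty}$ and is independent of $\beta$ and $g$.

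Next, let $\mathfrak{M}^{(\infty)}$ be any weak$^{\ast}$ accumulation point, i.e., $\mathfrak{M}^{(n_{k})}\to \mathfrak{M}^{(\infty)}$ in the weak$^{\ast}$ topology along some subsequence $(n_{k})_{k\in\mathbb{N}}$. For any fixed $\psi\in C(\Omega)$, the functions $\psi$ and $\psi\circ T$ are both continuous, so weak$^{\ast}$ convergence yields $\mathfrak{M}^{(n_{k})}(\psi)\to \mathfrak{M}^{(\infty)}(\psi)$ and $\mathfrak{M}^{(n_{k})}(\psi\circ T)\to \mathfrak{M}^{(\infty)}(\psi\circ T)$. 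Combined with the previous bound, we get
\begin{equation*}
\mathfrak{M}^{(\infty)}(\psi\circ T)-\mathfrak{M}^{(\infty)}(\psi)=\lim_{k\to\infty}\bigl(\mathfrak{M}^{(n_{k})}(\psi\circ T)-\mathfrak{M}^{(n_{k})}(\psi)\bigr)=0.
\end{equation*}
Since this holds for every $\psi\in C(\Omega)$, the limit measure $\mathfrak{M}^{(\infty)}$ is $T$-invariant, i.e., $\mathfrak{M}^{(\infty)}\in \mathcal{P}(T)$, as claimed.

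There is no real obstacle here: the argument is essentially the Krylov--Bogolyubov averaging trick, and the use of Birkhoff averages $\psi_{n}$ (rather than $\psi$ itself) in the definition of $\mathfrak{M}^{(n)}$ is precisely what makes invariance automatic in the limit. The only point requiring a moment of care is that the elementary $L^{\infty}$-bound $\Vert \phi_{n}-\psi_{n}\Vert_{\infty}\leq 2\Vert\psi\Vert_{\infty}/n$ survives the weighting by $e^{\frac{\beta n}{2}g_{n}^{2}}$, and this is immediate from the fact that the weighted measure in (\ref{q1}) is a probability.
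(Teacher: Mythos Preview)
Your proof is correct and follows essentially the same approach as the paper's: both exploit the telescoping identity $(\psi\circ T)_{n}-\psi_{n}=\frac{1}{n}(\psi\circ T^{n}-\psi)$, bound it uniformly by $2\Vert\psi\Vert_{\infty}/n$, and use that the normalized weight $e^{\frac{\beta n}{2}g_{n}^{2}}/\mu(e^{\frac{\beta n}{2}g_{n}^{2}})$ is a probability density to pass the estimate through the integral. Your write-up is in fact somewhat more explicit about the quantitative bound than the paper's version.
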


\begin{proof}
As $\Omega $ is a separable metric space, $\mathcal{P}(T)$ is a metrizable
weak$^{\ast }$ compact convex space and there is a subsequence $\mathfrak{M}%
^{(n_{k})}$, $k\in \mathbb{N}$, converging in the weak$^{\ast }$ topology to 
$\mathfrak{M}^{(\infty )}$. For any $n\in \mathbb{N}$, define the function%
\begin{equation*}
\gamma _{n}(x):=\frac{\exp \left( \frac{\beta n}{2}g_{n}(x)^{2}\right) }{\mu
\left( \exp \left( \frac{\beta n}{2}g_{n}(x)^{2}\right) \right) },\qquad
x\in \mathbb{R},
\end{equation*}
and consider the linear functional $l$ on $C(\Omega )$ defined by the
equilibrium probability, that is, here, 
\begin{equation*}
l(\psi ):=\lim_{k\rightarrow \infty }\mu \left( \psi _{n_{k}}\gamma
_{n_{k}}\right) =\lim_{k\rightarrow \infty }\int \psi _{n_{k}}\left(
x\right) \gamma _{n_{k}}\left( x\right) \mu \left( \mathrm{d}x\right) .
\end{equation*}%
Note from (\ref{Eq Birk Av}) that, for any $\psi \in C(\Omega )$, 
\begin{equation*}
\mu \left( \psi _{n_{k}}\gamma _{n_{k}}\right) -\mu \left( \left( \psi \circ
T\right) _{n_{k}}\gamma _{n_{k}}\right) =\int \left( \frac{\psi \left(
x\right) -\psi \circ T^{n_{k}}}{n_{k}}\right) \gamma _{n_{k}}\left( x\right)
\mu \left( \mathrm{d}x\right) .
\end{equation*}%
Since $\Omega $ is a compact metric space and any continuous function $\psi
\in C(\Omega )$ is uniformly bounded, it follows that, for any $\psi \in
C(\Omega )$, 
\begin{equation*}
l(\psi )=l(\psi \circ T).
\end{equation*}
Hence, $\mathfrak{M}^{(\infty )}$ is a $T$-invariant probability.
\end{proof}

\medskip \noindent We are now in a position to prove Theorem \ref{sdsdfsdf}%
:\medskip

\begin{proof}
Take any weak$^{\ast }$ accumulation point $\mathfrak{M}^{(\infty )}$ of the
sequence $\mathfrak{M}^{(n)}$, $n\in \mathbb{N}$. By the previous lemma, $%
\mathfrak{M}^{(\infty )}\in \mathcal{P}(T)$ and we will prove that $%
\mathfrak{M}^{(\infty )}$ is a minimizer of $\mathfrak{f}^{+}$ on $\mathcal{P%
}(T)$. The claim in Theorem \ref{sdsdfsdf} regarding the fact that a
minimizer of $\mathfrak{f}^{+}$ is necessarily in the closed convex hull of
the quadratic equilibrium probabilities essentially follows from the results
of Section \ref{Bog}, more precisely from Theorem \ref{erte}. In fact, using
this theorem, one proves that $\mathfrak{f}^{+}$ is the $\Gamma $%
-regularization of the function $\mathfrak{g}^{+}$ defined by (\ref{farobis}%
), that is, 
\begin{equation*}
\mathfrak{g}^{+}\mathfrak{(\rho )=-}\,\frac{\beta }{2}\rho (g)^{2}-h(\rho
)+\log d,\text{ \ \ }\rho \in \mathcal{P}(T).
\end{equation*}%
See, e.g., \cite{BP3} for the precise definition of the $\Gamma $%
-regularization of functions. By Theorem 1.4 of \cite{BP3}, this property
implies that any minimizer of $\mathfrak{f}^{+}$ on $\mathcal{P}(T)$ belongs
to the convex hull of the set of minimizers of $\mathfrak{g}$, which are
nothing but quadratic equilibrium probabilities. Note additionally that, if
there is a non-ergodic minimizer of $\mathfrak{f}^{+}$, then the quadratic
equilibrium probabilities for $g$ are non-unique, i.e., a (nonlinear) phase
transition takes place. This results from the fact that the set of
minimizers of $\mathfrak{f}^{+}$ is a (non-empty) face of $\mathcal{P}(T)$\
for $\mathfrak{f}^{+}$ is an affine weak$^{\ast }$ lower semicontinous
functional. See \cite{BPL} for much more details on nonlinear pressures and
their equilibrium probabilities.

We now prove that $\mathfrak{M}^{(\infty )}$ is a minimizer of $\mathfrak{f}%
^{+}$: By well-known properties of the Legendre-Fenchel transform, as $%
\mathfrak{f}^{+}$ is convex (it is even affine) and lower semicontinuous, to
prove that $\mathfrak{M}^{(\infty )}$ minimizes $\mathfrak{f}^{+}$\ it
suffices to show that $\mathfrak{M}^{(\infty )}$ is tangent to $p^{(\infty
)}:C(\Omega )\rightarrow \mathbb{R}$ at $0$, i.e., for all $\psi \in
C(\Omega )$, 
\begin{equation}
p^{(\infty )}(\psi )-p^{(\infty )}(0)\geq \mathfrak{M}^{(\infty )}(\psi ).
\label{q9}
\end{equation}%
This fact follows, for instance, from Theorem 10.47 in \cite{BP2} (a
classical result on convex analysis about tangent functionals as
minimizers). Now, we note that for all $k\in \mathbb{N}$, $\mathfrak{M}%
^{(n_{k})}$ is tangent to $p^{(n_{k})}:C(\Omega )\rightarrow \mathbb{R}$ at $%
0$, i.e., for all $\psi \in C(\Omega )$, 
\begin{equation}
p^{(n_{k})}(\psi )-p^{(n_{k})}(0)\geq \mathfrak{M}^{(n_{k})}(\psi ).
\label{q10}
\end{equation}%
Thus, taking the limit $k\rightarrow \infty $ we arrive at Inequality (\ref%
{q9}) for all $\psi \in C(\Omega )$.
\end{proof}

\section{The tilting LDP property}

\label{Til}

The aim of the present section is to highlight a relation between the
Bogoliubov variational principle discussed above and the tilting principle
of large deviation theory. For simplicity, here we set $f=-\log d$, that is, 
$\mu _{f}=\mu $. Given a continuous potential $A:\Omega \rightarrow \mathbb{R%
}$, remember that, for any $n\in \mathbb{N}$, $\mu _{n}$ denotes the
probability (\ref{binc78}) on $\mathbb{R}$, i.e., for any open interval $%
O\subseteq \mathbb{R}$,%
\begin{equation}
\mu _{n}(O)=\mu _{n}^{A}(O)=\mu \left( \left\{ z\mid A_{n}(z)\in O\right\}
\right) .  \label{binc78xy}
\end{equation}%
where ${A}_{n}$, $n\in \mathbb{N}$, are the Birkhoff averages of the
potential $A$, defined by Equation (\ref{Eq Birk Av}).

Further, given a continuous potential $A:\Omega \rightarrow \mathbb{R}$,
along with a continuous function $F:\mathbb{R}\rightarrow \mathbb{R}$, let $%
\mathrm{m}_{n}^{F,A}$, $n\in \mathbb{N}$, denote the family of probabilities
on $\mathbb{R}$ such that, for any open interval $O\subseteq \mathbb{R}$,%
\begin{equation}
\mathrm{m}_{n}^{F,A}(O)=\frac{\int_{O}e^{nF(x)}\mu _{n}^{A}(\mathrm{d}x)}{%
Z_{n}^{F,A}}=\frac{\int_{O}e^{nF({A}_{n}(x))}\mu (\mathrm{d}x)}{Z_{n}^{F,A}}
\label{zeno21}
\end{equation}%
where 
\begin{equation*}
Z_{n}^{F,A}=\mu _{n}^{A}\left( e^{nF}\right) =\int e^{nF(x)}\mu _{n}^{A}(%
\mathrm{d}x)=\int e^{nF({A}_{n}(x))}\mu (\mathrm{d}x).
\end{equation*}%
(Notice that $\mathrm{m}_{n}^{F,A}$ is a measure on the real line and not in
the symbolic space $\Omega $ as for example the probability $\mathfrak{M}%
^{(n)}$ defined by (\ref{q1}).)

One important question in large deviation theory is the existence of the
limit 
\begin{equation}
\lim_{n\rightarrow \infty }\mathrm{m}_{n}^{F,A}(B)=:\theta (B)=\theta
^{F,A}(B),  \label{zero312}
\end{equation}%
where $B\subseteq \mathbb{R}$ is an arbitrary interval, as well as the
corresponding convergence rate. From \eqref{zeno21}, for any continuous
function $\varphi :\mathbb{R}\rightarrow \mathbb{R}$, one has 
\begin{multline*}
\mathrm{m}_{n}^{F,A}(\varphi )=\int \varphi (x)\mathrm{m}_{n}^{F,A}(\mathrm{d%
}x)=\int \varphi (x)e^{nF(x)}\mu _{n}^{A}(\mathrm{d}x) \\
=\int \varphi \left( {A}_{n}(x)\right) e^{nF\left( {A}_{n}(x)\right) }\mu (%
\mathrm{d}x).
\end{multline*}%
Then, the tilting principle says (see, for instance, Theorem 1.2 in \cite%
{Reza} or Lemma 3 in \cite{Kos}) that, if the function $F$ is not only
continuous but also bounded, then such a family of probabilities $\mathrm{m}%
_{n}^{F,A}$, $n\in \mathbb{N}$, satisfies a Large Deviation Principle (LDP)
with rate function 
\begin{equation}
\mathcal{I}^{F,A}(x)=I_{A}(x)-F(x)-\inf_{y\in \mathbb{R}}\{I_{A}(y)-F(y)\},
\label{zero21}
\end{equation}%
for any $x\in \mathbb{R}$, where $I_{A}=I$ is the rate function (\ref{feij68}%
) for the family $\mu _{n}^{A}$, $n\in \mathbb{N}$ (see (\ref{aret23x})).
Note that $\inf \mathcal{I}^{F,A}=0$.

More precisely, the LDP refers here to the following properties:

\begin{itemize}
\item \emph{LD upper bound}. For any closed interval $C\subseteq \mathbb{R}$%
, 
\begin{equation}
\limsup_{n\rightarrow \infty }{\frac{1}{n}}\log \mathrm{m}_{n}^{F,A}(C)\leq
-\inf_{x\in C}\{\mathcal{I}^{F,A}(x)\}.  \label{areta231}
\end{equation}

\item \emph{LD lower bound}. For any open interval $O\subseteq \mathbb{R}$,%
\begin{equation}
\liminf_{n\rightarrow \infty }{\frac{1}{n}}\log \mathrm{m}_{n}^{F,A}(O)\geq
-\inf_{x\in O}\{\mathcal{I}^{F,A}(x)\}.  \label{areto232}
\end{equation}
\end{itemize}

\noindent From Theorem 1 in \cite{Kos} (i.e., Varadhan(-Bryc) lemma) we
additionally get that, for every continuous and bounded function $U:\mathbb{R%
}\rightarrow \mathbb{R}$,%
\begin{equation}
\lim_{n\rightarrow \infty }{\frac{1}{n}}\log \int e^{nU\left( x\right) }%
\mathrm{m}_{n}^{F,A}(\mathrm{d}x)=\sup_{x\in \mathbb{R}}\{U(x)-\mathcal{I}%
^{F,A}(x)\}.  \label{aretu233}
\end{equation}

In other words, in a sense made precise by the above properties, the
probability density $\mathrm{m}_{n}^{F,A}$ at $x\in \mathbb{R}$ tends to
zero at an exponential rate given by $\mathcal{I}^{F,A}(x)$, as $%
n\rightarrow \infty $. In particular, in the limit $n\rightarrow \infty $, $%
\mathrm{m}_{n}^{F,A}$ is concentrated at the points $\bar{x}\in \mathbb{R}$
at which the function $\mathcal{I}^{F,A}$ takes the zero value. Note in
particular that, as $I_{A}$ is a real analytic function, if $F$ is also real
analytic, then the set of such points $\bar{x}$ is finite. Observe further
that if $F$ is convex then $\mathcal{I}^{F,A}$ typically takes the zero
value at more than one point, as illustrated in an explicit example below.

Clearly, the rate function $\mathcal{I}^{F,A}$ vanishes at $\bar{x}$ iff $%
\bar{x}$ maximizes the quantity $F(x)-I_{A}(x)$ and, for an arbitrary $x\in 
\mathbb{R}$, the probability density $\mathrm{m}_{n}^{F,A}$ at $x\in \mathbb{%
R}$ tends to zero at an exponential rate 
\begin{equation*}
\mathcal{I}^{F,A}(x)=-\left( F(x)-I_{A}(x)-\sup_{y\in \mathbb{R}%
}\{F(y)-I_{A}(y)\}\right) .
\end{equation*}%
Thus, in order to control this rate one has to determine the supremum $%
\sup_{y\in \mathbb{R}}\{F(y)-I_{A}(y)\}$. But, if $F$ is a convex function,
as shown in the beginning of Section \ref{The convex case}, 
\begin{equation*}
\sup_{y\in \mathbb{R}}\{F(y)-I_{A}(y)\}=\sup_{s\in \mathbb{R}}\left\{
-F^{\ast }(s)+P(sA)-\log d\right\} ,
\end{equation*}%
where $F^{\ast }$ is the Legendre transform of $F$ and $P(sA)$\ is the
pressure for the potential $sA$. In this way, we conclude that, for a convex 
$F$, the probability density $\mathrm{m}_{n}^{F,A}$ at $x\in \mathbb{R}$
tends to zero at an exponential rate 
\begin{equation*}
\mathcal{I}^{F,A}(x)=-\left( F(x)-I_{A}(x)-\sup_{s\in \mathbb{R}}\left\{
-F^{\ast }(s)+P(sA)-\log d\right\} \right) .
\end{equation*}

Similarly, from the results of Section \ref{conc} (see (\ref{erer})), if $F$
is a concave function then the probability density $\mathrm{m}_{n}^{F,A}$ at 
$x\in \mathbb{R}$ tends to zero at exponential rate 
\begin{equation*}
\mathcal{I}^{F,A}(x)=-\left( F(x)-I_{A}(x)-\inf_{s\in \mathbb{R}}\left\{
G^{\ast }(-s)+P(sA)-\log d\right\} \right) ,
\end{equation*}%
with $G=-F$. This remark gives a new interesting view on Bogoliubov's
variational principle.

As an example, we plot in Figure \ref{fig21} the exact the rate function $%
\mathcal{I}^{F,A}$ in a particular case. We use the expression given by %
\eqref{xili} for $I_{A}$. Figure \ref{fig21} should be compared with Figure
1 in \cite{Kos} and Figure 2.3 in \cite{Vele}. Both refer to the classical
Curie-Weiss model (a simple potential with no dynamics attached), and
corresponds to a convex $F$.

\begin{figure}[h!]
\centering
\includegraphics[scale=0.4,angle=0]{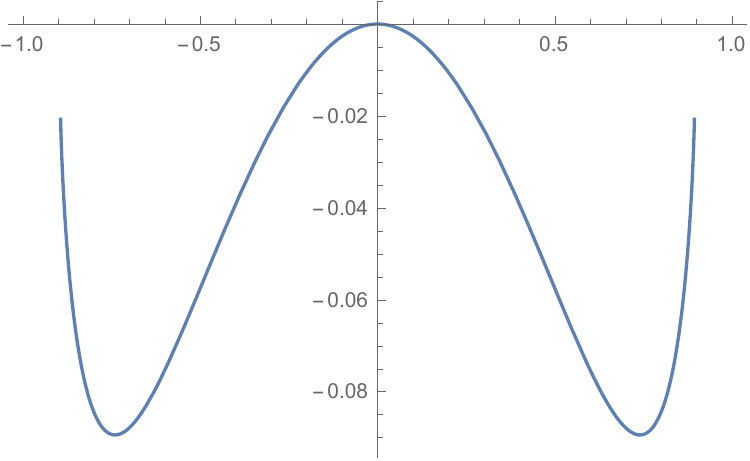}
\caption{Given the potential $\protect\beta\, A(x)=\protect\beta\,\frac{J}{2}
\sum_{n=1}^\infty 2^{-n} x_n$, $F(x)=\frac{\protect\beta J}{2} x^2$, $J=2$,
and the maximal entropy measure $\protect\mu$, we show above the graph of
the function $y\to\mathcal{I}^{A, F}= I_{A}(y) - \, u_{A,J,\protect\beta,0}
\,\,\,y^2= I_{A}(y) - \,\frac{\protect\beta \, 2}{2} \,\,\,y^2$, when $%
\protect\beta = \frac{ 4^{1/3} + 0.2}{2}$.}
\label{fig21}
\end{figure}

Finally, notice that the recent article \cite{Hirsh} also uses large
deviation properties to analyze canonical Gibbs probabilities, but in a
different context.

\bigskip

\noindent \textit{Acknowledgments:} This work is partially  supported by CNPq
(303682/2025-6), CNPq(310053/2020-0) and the Basque Government through the grant IT1615-22 and
the BERC 2022-2025 program as well as the following grants:

\noindent Grant PID2024-156184NB-I00 funded by
MICIU/AEI/10.13039/501100011033 and cofunded by the European Union.

Jean-Bernard Bru  - Basque Center for Applied Mathematics - Spain - jb.bru@ikerbasque.org

\medskip

Walter de Siqueira Pedra - ICMC - USP -  Brazil - wpedra@icmc.usp.br
\medskip

Artur O. Lopes - IME - UFRGS - Brazil - arturoscar.lopes@gmail.com

\end{document}